\documentclass{amsart}
\usepackage[utf8]{inputenc}
\usepackage{latexsym,amsxtra,amscd,ifthen}
\usepackage{amsfonts}
\usepackage{color,graphicx}
\usepackage{verbatim}
\usepackage{amsmath}
\usepackage{amsthm}
\usepackage{mathrsfs}
\usepackage{amssymb}
\usepackage{url}
\usepackage{ulem}
\usepackage{hyperref}
\usepackage{float}

\usepackage{tikz}
\usetikzlibrary{arrows,shapes,chains}
\usepackage{tikz-cd}

\usepackage{yhmath}

\theoremstyle{plain}

\newtheorem{theorem}{Theorem}
\newtheorem{lemma}[theorem]{Lemma}
\newtheorem{proposition}[theorem]{Proposition}
\newtheorem{corollary}[theorem]{Corollary}

\numberwithin{theorem}{section}
\numberwithin{equation}{theorem}

\theoremstyle{definition}
\newtheorem{definition}[theorem]{Definition}
\newtheorem{example}[theorem]{Example}
\newtheorem{remark}[theorem]{Remark}

\newtheorem*{question*}{Question}

\newcommand{\Z}{\mathbb{Z}}

\newcommand{\kk}{\Bbbk}

\DeclareMathOperator{\Mod}{Mod}

\DeclareMathOperator{\Hom}{Hom}

\DeclareMathOperator{\sMod}{sMod}

\DeclareMathOperator{\sotimes}{\overset{s}{\otimes}}
\DeclareMathOperator{\shom}{\overset{s}{\Hom}}
\DeclareMathOperator{\Ring}{\mathtt{Ring}}
\DeclareMathOperator{\Ab}{\mathtt{Ab}}
\DeclareMathOperator{\sAb}{\mathtt{sAb}}
\DeclareMathOperator{\sRing}{\mathtt{sRing}}

\DeclareMathOperator{\kAlg}{\mathtt{\Bbbk-Alg}}

\begin{document}

\title{Introduction to Separable Simplicial Algebras}

\author{Xuan Huang, Garcia Sun, Eric Zhang, and James J. Zhang}

\address{Xuan Huang: Department of Mathematics, Box 354350,
University of Washington, Seattle, Washington 98195, USA}

\email{xhuang9@uw.edu}

\address{Garcia Sun: Department of Mathematics, Box 354350,
University of Washington, Seattle, Washington 98195, USA}

\email{zhiqisun@uw.edu}

\address{Eric Zhang: Department of Mathematics, Box 354350,
University of Washington, Seattle, Washington 98195, USA}

\email{zhangj20@uw.edu}

\address{James J. Zhang: Department of Mathematics, Box 354350,
University of Washington, Seattle, Washington 98195, USA}

\email{zhang@math.washington.edu}

\begin{abstract}
We study separable simplicial algebras and Azumaya simplicial algebras.
\end{abstract}

\subjclass[2020]{Primary 18G31, 18N50, 55U10, 16H05, 16K50}






\keywords{separable algebra,
Azumaya algebra, Brauer group, simplicial algebra, simplicial module, simplicially enriched structure}


\maketitle


\section*{Introduction}
\label{yysec0}

\subsection{Motivation}
\label{yysec0.1}
Let $\Bbbk$ be a base commutative ring. If $\Bbbk$ is a field,
separable algebras over $\Bbbk$ are exactly 0-dimensional twisted 
Calabi-Yau algebras
\cite[Theorem 1.6]{RR22}; as such, they possess good homological properties 
(with minimal dimension). While the structure of separable 
algebras over a field is well-understood, separable 
algebras over a general commutative base ring are also 
extensively studied due to their foundational role in several areas such as algebra, 
number theory, and algebraic geometry. A special class of separable algebras are Azumaya 
algebras, which serve as key ingredients in defining the Brauer group of a commutative ring.

In this paper, we study separable simplicial algebras and Azumaya simplicial algebras and 
introduce the Brauer group of a simplicial commutative ring at the end. One of our ultimate
goals is to find a useful 
simplicial version of the Brauer group \cite{HSZZ27}.

\subsection{Review of previous
results}
\label{xxsec0.2}
The classical theory of separable algebras can be found in \cite{AG1960, DI1971, Ford2017}.
Pareigis initiates the study of separable algebras in symmetric monoidal categories \cite{PareigisIV},
which is extended to the setting of a braided monoidal category by Van Oystaeyen and Y. Zhang \cite{VOZ1998}.
Later Borceux and Vitale generalize the notion to separable enriched categories \cite{BV2002}. 
Balmer considers separability in triangulated categories \cite{Balmer2011}, 
and recently Naumann, Pol, and Ramzi push the notion of separable commutative algebras to symmetric monoidal stable $\infty$-categories \cite{NPR2026}.
We also note there are closely related studies of Azumaya algebras and Brauer groups in the monoidal categorical setting \cite{AMS2007,  BBW2009, BT2015, FisherPalmquist1975, Vitale1996, Toen2012}
along with the notion of separable functors \cite{CMZ2002, NdBvO1989}.

Although the separability of an algebra is almost unanimously defined by the splitting of the multiplication map (as a bimodule map), 
an Azumaya algebra can be defined in two natural ways: 
either as a central separable algebra or as a certain ``generator'' that is Morita equivalent to the base ring.
These two notions are equivalent in the classical setting by the celebrated Auslander-Goldman theorem (\cite[Theorem 2.1]{AG1960} and \cite[Theorem 1.2]{BV2002}).
In the more general monoidal categorical setting,
the Morita-theoretical definition of an Azumaya algebra (for instance, see \cite[p.~63]{FisherPalmquist1975}, \cite[p.~114]{PareigisIV}, and \cite[Proposition 3.1]{VOZ1998}) is the most-often studied.

\subsection{Why the 
simplicial setting?}
\label{yysec0.3}

Commutative simplicial algebras/rings 
have been used in algebraic topology, 
homological algebra, and derived 
algebraic geometry. We choose to 
further investigate separable algebras 
in the simplicial setting because we 
are searching for useful and 
computable invariants of simplicial 
algebras. Our hope is to 
understand the correct 
definition of the Brauer group in 
the simplicial setting,
which is also the main goal of
our forthcoming paper \cite{HSZZ27}.

Passing from ordinary algebras to simplicial algebras is often a first step toward a homotopical generalization of classical theory
\cite{SS2000, SS2003,Toen2012, TV2008};
for instance, To\"en defines  derived Azumaya algebras over a commutative simplicial ring \cite{Toen2012}.
In general, simplicial algebras carry homotopical information through their homotopy groups while retaining the familiar algebraic language of rings, modules, and homological algebra.
In this paper we generalize 
several classical results concerning separable algebras to the simplicial setting. This becomes possible because
we have more tools in the simplicial setting than in the general setting
of monoidal categories.

For comparison, let us consider again
the notion of an Azumaya algebra
in a general setting
of monoidal categories as well as 
in the 
setting of this paper.
Let $R$ be a commutative simplicial algebra. Our Azumaya algebra over $R$
is defined to be a central
separable $R$-algebra (Definition 
\ref{yydef6.1}), which is also 
Morita-theoretical by Theorem \ref{yythm0.2}.
In the general setting, 
one can fully 
characterize the Morita-theoretical Azumaya algebras as in \cite[Theorem 4]{FisherPalmquist1975},
it remains unknown if such a notion agrees with the central separable version.
This deficiency leads Pareigis to define the $1$-Azumaya algebra (\cite[p.~115]{PareigisIV}, Morita-theoretically) and the $2$-Azumaya algebra (\cite[p.~121]{PareigisIV}, central-separability and some extra requirement).
Pareigis notes in \cite[p.~121]{PareigisIV} that it is generally unclear if $1$-Azumaya algebras and $2$-Azumaya algebras coincide.
If the unit is projective, then they agree \cite[p.~124]{PareigisIV}. 
But $2$-Azumaya algebras are still not the central-separable ones.

Following Pareigis' work,
Van Oystaeyen and Y. Zhang define two versions of Brauer groups using central separable algebras (\cite[p.~123]{VOZ1998}) and Azumaya algebras (\cite[p.~108]{VOZ1998}).
Their unifying theorem \cite[Theorem 4.9]{VOZ1998} equates central-separable algebras with Azumaya algebras but relies on the assumption that the base symmetric monoidal category $\mathcal{C}$ is $\Bbbk$-linear and the commutative ring $\Bbbk$ is the unit of $\mathcal{C}$. 
Borceux and Vitale's result \cite[Theorem 3.4]{BV2002} is another unifying theorem equating central-separable ``strong generators'' with Azumaya algebras,
which is more akin to \cite[Theorem 12]{PareigisIV} in spirit.
When $R$ is a commutative simplicial ring, the category of simplicial
modules over $R$ is a closed symmetric monoidal category. But the category 
of simplicial modules over $R$ does not satisfy the crucial assumption of Van Oystaeyen and Y. Zhang (\cite[p.~112]{VOZ1998}) when $R$ is not a constant simplicial ring.
So the central-separable/Azumaya equivalence (Theorem \ref{yythm0.2}) 
does not follow from the previous 
work such as \cite{VOZ1998}.

\subsection{Results of this paper}
\label{yysec0.4}

Let $R$ be a commutative simplicial ring and $R\sMod$ be the category of simplicial modules over $R$.
The tensor product in $R\sMod$ is denoted by $\sotimes_R$ and the unit is $R$.
Let $A$ be a simplicial algebra over $R$ (=a monoid in $R\sMod $) and $A^{o}$ be the opposite algebra (see Section \ref{yysec1}).
Let $A^e = A \sotimes_R A^{o}$.
Then $A$ is {\it separable} 
over $R$ if the multiplication map $\mu: A^e \to A $ splits.

\subsubsection{Basics}
We first formulate the simplicial centralizers (Definition \ref{yydef3.7}) and enriched separable functors (Definition \ref{yydef3.13}) to characterize simplicial separability.

\begin{proposition}
[Propositions \ref{yypro3.10} and \ref{yypro3.16}]
\label{yypro0.1}
    Let $A$ be a  simplicial algebra over $R$.
    The following are equivalent.
    \begin{enumerate}
        \item[(1)] $A$ is separable over $R$.

        \item[(2)] The centralizer functor $(-)^A: A^e\sMod \to R\sMod$ is exact.

        \item[(3)] The forgetful functor $\mathcal{U}: A\sMod \to R\sMod$ is a separable $R\sMod$-enriched functor.
    \end{enumerate}
\end{proposition}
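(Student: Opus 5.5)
We prove (1)$\Leftrightarrow$(2) and (1)$\Leftrightarrow$(3) separately, modelling both on the classical arguments. The key structural input is that $R\sMod$ is closed symmetric monoidal. So for an $A^e$-module $M$ the simplicial centralizer $M^A$ (Definition \ref{yydef3.7}) can be identified, naturally in $M$, with the simplicial Hom object $\shom_{A^e}(A,M)$ in $R\sMod$. Degreewise, $(M^A)_n$ is the simplicial set of $A^e$-module maps $A\otimes \Delta[n]\to M$, equivalently the elements of $M$ that commute with $A$ in the simplicial sense. We first record this identification. It reduces statement (2) to: the functor $\shom_{A^e}(A,-)$ is exact, i.e.\ $A$ is "simplicially projective" as an $A^e$-module.

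For (1)$\Rightarrow$(2), let $\sigma:A\to A^e$ be an $A^e$-linear splitting of $\mu$. Then $\shom_{A^e}(A,-)$ is a retract of $\shom_{A^e}(A^e,-)\cong \mathrm{id}$, as functors $A^e\sMod\to R\sMod$. Exactness is checked degreewise and levelwise on simplicial modules, and a retract of an exact functor is exact. For (2)$\Rightarrow$(1), apply exactness to the surjection $\mu:A^e\to A$, which is surjective since it has the set-theoretic section $a\mapsto a\otimes 1$. The induced map $(A^e)^A\to A^A$ is then surjective in each degree. In particular it is surjective in degree $0$, where $(A^A)_0$ contains the identity $A^e$-map $A\to A$ (in terms of elements, the central element $1$). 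A preimage in degree $0$ is an $A^e$-linear map $\sigma:A\to A^e$ with $\mu\sigma=\mathrm{id}$, or equivalently a separability idempotent $e\in (A^e)_0$ with $\mu(e)=1$ that commutes with all of $A$ in every simplicial degree, via degeneracies. This is precisely a splitting.

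\textbf{Main obstacle.} The delicate point is exactly this degree-$0$ extraction. We must check that a $0$-simplex of the simplicial centralizer really gives a map of simplicial $A^e$-modules, compatible with all faces and degeneracies, and not just a map in degree $0$. We would handle this by defining $M^A$ through the enriched Hom, so that $0$-simplices are by construction genuine $A^e\sMod$-morphisms. We would also make sure that "exact" in (2) is interpreted as levelwise exactness, so that surjectivity in degree $0$ follows.

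For (1)$\Leftrightarrow$(3), separability of the enriched functor $\mathcal U$ (Definition \ref{yydef3.13}) means that the natural map $\shom_A(M,N)\to\shom_R(\mathcal U M,\mathcal U N)$ has an enriched natural retraction. Given $e=\sum x_i\otimes y_i$, define the retraction by averaging: $f\mapsto \big(m\mapsto \sum x_i f(y_i m)\big)$. This is done degreewise on $n$-simplices $f:M\otimes\Delta[n]\to N$. It is $A$-linear because $e$ is centralizing, it fixes $A$-linear maps because $\mu(e)=1$, and it is compatible with simplicial structure and composition because $e$ is a $0$-simplex acted on via degeneracies. Conversely, apply the retraction $P$ with $M=A^e$ viewed as a left $A$-module through the first factor and $N=A$, using the $R$-linear map $\mu$, or alternatively use the Yoneda-type element from $A\to A\sotimes_R A$, $a\mapsto a\otimes 1$. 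Enriched naturality with respect to right multiplications then shows that $P$ produces an $A^e$-linear section of $\mu$, as in the classical proof of Nastasescu--Van den Bergh--Van Oystaeyen. In this direction, the enriched naturality is what supplies the bimodule compatibility.
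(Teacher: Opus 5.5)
Your arguments for (1)$\Leftrightarrow$(2) and for (1)$\Rightarrow$(3) are sound and agree in substance with the paper's.

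\begin{itemize}
\item Exhibiting $\shom_{A^e}(A,-)$ as a retract of $\shom_{A^e}(A^e,-)$ via $\sigma^*\mu^*=\mathrm{Id}$ is a functorial rephrasing of the paper's splitting $M=eM\oplus(1-e)M$ with $M^A=eM$.
\item Your degree-$0$ extraction is the paper's argument once the identification $(-)^A\cong\shom_{A^e}(A,-)$ of Lemma~\ref{yylem3.9}(3) is in hand.
\item Your averaging map is exactly the paper's $H(f)=s_0^m(e)\cdot f$.
\end{itemize}

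The gap is in (3)$\Rightarrow$(1): neither map you propose to feed into the retraction $P$ yields a bimodule section of $\mu$.

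\begin{itemize}
\item Applying $P$ to $\mu\colon A^e\to A$ just returns $\mu$, since $\mu$ is already left $A$-linear. In any case a map $A^e\to A$ cannot be a section.
\item Applying $P$ to $x\mapsto x\otimes 1$, with $A$ acting on the first factor of $A^e$ as you stipulate, also returns the same map, because it too is already left $A$-linear. But it is not right $A$-linear, since $xa\otimes 1\neq x\otimes a$. Correspondingly, the intertwining hypothesis $\rho_a\circ\jmath=\jmath\circ r_a$ needed to invoke condition (2) of Definition~\ref{yydef3.13} fails for it. You have the factors the wrong way round.
\end{itemize}

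What works, and what the paper uses, is the $R$-linear but \emph{not} $A$-linear section $\jmath(x)=1\otimes x$. The argument then runs as follows.

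\begin{itemize}
\item $P(\jmath)\colon A\to A^e$ is left $A$-linear.
\item By Remark~\ref{yyrem3.14}, $\mu\circ P(\jmath)=P(\mu\circ\jmath)=P(\mathrm{Id}_A)=\mathrm{Id}_A$.
\item Both $\rho_a\circ s_0^n(\jmath)$ and $s_0^n(\jmath)\circ r_a$ send $\sigma\otimes x$ to $1\otimes x\sigma^*(a)$, so they are equal. Condition (2) then gives $\rho_a\circ P(s_0^n(\jmath))=P(s_0^n(\jmath))\circ r_a$.
\item Evaluating at $\mathrm{Id}_{[n]}\otimes x$ gives $P(\jmath)(xa)=(1\otimes a)\cdot P(\jmath)(x)$, which is right $A$-linearity. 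Hence $P(\jmath)$ is an $A^e$-linear section of $\mu$.
\end{itemize}

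Here $r_a\in\shom_A(A,A)_n$ and $\rho_a\in\shom_A(A^e,A^e)_n$ denote right multiplication by $a\in A_n$. These are genuine $n$-morphisms, not $0$-morphisms, since an element of $A_n$ does not define a simplicial endomorphism of $A$ by right multiplication. It is therefore essential that condition (2) is imposed on $n$-morphisms, and your phrase ``enriched naturality with respect to right multiplications'' needs to be made precise in exactly this way.
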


\subsubsection{Auslander-Goldman Theorem}
Following the classical theory \cite{Ford2017}, 
a simplicial algebra $A$ over $R$ 
is {\it Azumaya} if $A$ is central separable over $R$.
Based on the enriched Morita theory \cite{VOZ1998, PareigisIII},
we characterize the simplicial $R$-progenerator (Definition \ref{yydef5.1}) as a simplicial $R$-module $M$ such that $M$ is a direct summand of some $R^{\oplus t}$ and $R$ is a direct summand of some $M^{\oplus t'}$ as simplicial $R$-modules (Lemma \ref{yylem5.3}).
We show a simplicial version of Auslander-Goldman Theorem:
central-separable Azumaya algebras (condition (1) of Theorem \ref{yythm0.2}) agree with Morita-theoretical Azumaya algebras (condition (3) of Theorem \ref{yythm0.2}) for simplicial algebras.
This fully resolves the question left open in \cite{  BV2002,VOZ1998,PareigisIV} for simplicial algebras.

\begin{theorem}
[Theorem \ref{yythm6.4}]
\label{yythm0.2}
{\rm{(Unifying Theorem)}}
Let $A$ be a simplicial algebra over $R$.
The following are equivalent.
\begin{enumerate}
\item[(1)] 
$A$ is Azumaya (central separable) over $R$.
\item[(2)] 
$A$ is a simplicial $A^e$-progenerator and $A$ is central over $R$.
\item[(3)] 
$A$ is a simplicial $R$-progenerator and $A^e \to \shom_R(A,A)$ given by left $A^e$-actions is an isomorphism of simplicial $R$-algebras.
\end{enumerate}
\end{theorem}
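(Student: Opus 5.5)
We prove the cycle $(1)\Rightarrow(2)\Rightarrow(3)\Rightarrow(1)$, following the classical Auslander--Goldman/Ford pattern. We use three tools stated earlier: Proposition~\ref{yypro0.1} (separability is equivalent to exactness of the centralizer functor $(-)^A$ and to separability of the forgetful functor), the characterization of simplicial progenerators by direct summands (Lemma~\ref{yylem5.3}), and enriched Morita theory for the simplicial category $R\sMod$. Throughout, the centralizer $M^A=\shom_{A^e}(A,M)$ serves as the simplicial replacement for the classical fixed-point module. Centrality of $A$ means $A^A=R$, via the unit $R\to A$.

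\emph{$(1)\Rightarrow(2)$.} Since $\mu:A^e\to A$ splits as a map of simplicial $A^e$-modules, $A$ is a direct summand of $A^e$, hence of $(A^e)^{\oplus 1}$. For the generator half, by Lemma~\ref{yylem5.3} it suffices to show that $A^e$ is a direct summand of some $A^{\oplus t}$. We use the evaluation map $A\sotimes_R \shom_{A^e}(A,A^e)\to A^e$, i.e.\ $A\sotimes_R (A^e)^A\to A^e$. We first show its image is a two-sided ideal whose $A$-centralizer is all of $(A^e)^A$. Then, using exactness of $(-)^A$ (Proposition~\ref{yypro0.1}(2)) together with $A^A=R$, we show that the cokernel $C$ satisfies $C^A=0$. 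Next we argue $C=0$: any nonzero simplicial $A^e$-module that is a quotient of $A^e$ has a nonzero map from $A$ after applying the separability idempotent, giving a nonzero element of $C^A$. Having $C=0$, we use the splitting of $\mu$ to produce a split surjection $A\sotimes_R P\to A^e$. Here $P=(A^e)^A$ is a direct summand of $A^e$ as an $R$-module, since $(A^e)^A$ is the image of the idempotent $e\cdot-$. We still need $R$-finiteness to get $P$ inside a finite $R^{\oplus t}$. This finiteness is where I expect the main obstacle. I would obtain it from the finiteness of the separability idempotent levelwise, combined with the simplicial structure maps, to get a uniform $t$.

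\emph{$(2)\Rightarrow(3)$.} Enriched Morita theory for the $A^e$-progenerator $A$ gives a $R\sMod$-enriched equivalence
\[
\shom_{A^e}(A,-):A^e\sMod\to \shom_{A^e}(A,A)\sMod .
\]
Here $\shom_{A^e}(A,A)=A^A=R$ by centrality, so $A^e\sMod\simeq R\sMod$. The double centralizer property of progenerators then gives $A^e\cong \shom_{R}(A,A)$ via the left action. Moreover, $A$ corresponds to $R$ under the equivalence, and the equivalence preserves and reflects progenerators. Hence, using that $A^e$ is $R$-progenerator-like under the equivalence, $A$ is a summand of $R^{\oplus t}$ and $R$ is a summand of $A^{\oplus t'}$.

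\emph{$(3)\Rightarrow(1)$.} If $A$ is an $R$-progenerator, then $\shom_R(A,-)$ and $A\sotimes_R-$ give an enriched Morita equivalence $R\sMod\simeq \shom_R(A,A)\sMod=A^e\sMod$, sending $R\mapsto A$. Centrality follows because $A^A=\shom_{A^e}(A,A)\cong \shom_R(R,R)=R$. For separability, $R$ is projective in $R\sMod$, in the sense that $\shom_R(R,-)$ is exact. Transporting along the equivalence, $A$ is projective in $A^e\sMod$, so $\mu:A^e\to A$ splits.

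The delicate points are the finiteness in $(1)\Rightarrow(2)$ and checking that the enriched Morita theorems of \cite{PareigisIII,VOZ1998} apply to $R\sMod$ without the constant-unit hypothesis. For the latter I would verify directly the needed closed symmetric monoidal facts: that $\shom$ commutes with finite direct sums and that summands are preserved.
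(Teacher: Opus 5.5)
\textbf{Gap in the generator half of $(1)\Rightarrow(2)$.} Your $(2)\Rightarrow(3)$ and $(3)\Rightarrow(1)$ are essentially the paper's Morita-theoretic argument: Theorems~\ref{yythm5.5} and~\ref{yythm5.8} and Corollary~\ref{yycor5.6} applied to ${}_{A^e}A_R$, with centrality read as $R\cong A^A\cong\shom_{A^e}(A,A)$. For ``$A$ is an $R$-sprogenerator'' you should simply cite Theorem~\ref{yythm5.5}(2), since ``$A$ corresponds to $R$'' only describes $A$ as an $A^e$-smodule.

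The genuine gap is in $(1)\Rightarrow(2)$. Every $A^e$-map $A\cong A^ee\to A^e$ is right multiplication by an element of $eA^e=(A^e)^A$. So the image of your evaluation map $A\sotimes_R(A^e)^A\to A^e$ is the sideal $A^eeA^e$, and $C=A^e/A^eeA^e$. Then $C^A=eC=0$ holds for every separable $A$; you invoke $A^A=R$ at a step where it is not needed.

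Your reason for $C=0$ is circular. The image of $e$ in $C$ is zero by construction, and $\Hom_{A^e\sMod}(A,C)\cong(C^A)_0=0$, so there is no ``nonzero map from $A$.'' Worse, $C^A=0\Rightarrow C=0$ is false without centrality. Take $A=\Bbbk\times\Bbbk$ over a constant $\Bbbk\neq0$. It is separable with $e=f_1\otimes f_1+f_2\otimes f_2$, but $A^e$ is commutative, so $A^eeA^e=A^ee$ is proper and $C\neq0$. Centrality must be used exactly at this step, and your proposal never says how.

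\textbf{The missing idea.} As in the paper, note that $A^e$ is itself Azumaya over $R$ (Remark~\ref{yyrem6.2}(3), via Lemma~\ref{yylem4.2} and Corollary~\ref{yycor4.7}). So the ideal correspondence of Lemma~\ref{yylem6.3}, which reduces levelwise to the classical result, applies to $A^e$. A maximal sideal $M\supseteq A^eeA^e$ would then equal $IA^e$ for a sideal $I$ of $R$. Hence $1_{A_0}=\mu_{A/R}(e)\in\mu_{A/R}(IA^e)\subseteq IA$, which forces $IA=A$ and therefore $A^e=IA^e=M$, a contradiction.

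Once $A^eeA^e=A^e$, the finiteness you flag as the main obstacle disappears. Write $1_{(A^e)_0}=\sum_{i=1}^m u_iev_i$ in level $0$. The maps $x\mapsto(xs_0^n(u_i)s_0^n(e))_i$ and $(y_i)\mapsto\sum_iy_is_0^n(v_i)$ exhibit $A^e$ as a direct summand of $(A^ee)^{\oplus m}\cong A^{\oplus m}$. This is just Lemma~\ref{yylem5.3}(2): surjectivity in level $0$ already yields finitely many elements, propagated by $s_0^n$. No $R$-finiteness of $(A^e)^A$ is needed.
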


We are wondering if the above unifying
theorem holds for other non-simplicial
setting.
Along this line, we also have

\begin{theorem}
[Theorem \ref{yythm6.6}]
\label{yythm0.3}
    Let $A$ be a simplicial algebra over $R.$
    Then 
    $A$ is separable if and only if $A$ is Azumaya over the simplicial center $Z(A)$ {\rm{(}}Definition \ref{yydef1.5}{\rm{)}} and $Z(A)$ is separable over $R$.
\end{theorem}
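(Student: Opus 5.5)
The plan is to follow the classical argument: a separable algebra is Azumaya over its center, and separability is transitive along a tower of algebras. I would work levelwise where possible and use the simplicial tools stated earlier: Proposition \ref{yypro0.1} and the characterization of separability by splitting of $\mu$. Write $Z=Z(A)$ for the simplicial center (Definition \ref{yydef1.5}), a commutative simplicial $R$-algebra. Then $A$ is a $Z$-algebra and $Z$ is central in $A$ by construction. There are three steps.

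\textbf{Step 1: base change of the separability idempotent.} Suppose $A$ is separable over $R$. Then there is an $A^e$-linear splitting $\sigma: A\to A\sotimes_R A^o$ of $\mu$. Equivalently, in each degree $n$ there is a separability element $e_n=\sigma_n(1)$, and these elements are compatible with faces and degeneracies. Compose $\sigma$ with the canonical surjection $A\sotimes_R A^o\to A\sotimes_Z A^o$, which is a coequalizer of simplicial modules. The result splits $\mu_Z: A\sotimes_Z A^o\to A$ and is $A\sotimes_Z A^o$-linear. Hence $A$ is separable over $Z$. Since $Z$ is the center, $A$ is central separable over $Z$, that is, Azumaya over $Z$ in the sense of Definition \ref{yydef6.1}.

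\textbf{Step 2: $Z$ is separable over $R$.} I will use Proposition \ref{yypro0.1}(2): the centralizer functor $(-)^A$ on $A^e\sMod$ is exact. The classical argument applies $(-)^A$ to the split epimorphism $\mu$ and its splitting. Note that $A^A=Z$ and $(A\sotimes_R A^o)^A$ maps onto $Z$. We need a $Z\sotimes_R Z$-linear splitting of $Z\sotimes_R Z\to Z$. The classical proof (Ford, Auslander--Goldman) takes the separability idempotent $e$ of $A$ over $R$ and pushes it into $Z\sotimes_R Z$ via a $Z$-bimodule retraction $A\to Z$. Such a retraction exists because $A$ is Azumaya over $Z$, which makes $Z$ a direct summand of $A$ as a $Z$-module; this follows from Theorem \ref{yythm0.2}(3), since $A$ is a $Z$-progenerator.

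Concretely, I plan the following. Use Theorem \ref{yythm0.2} to get $A\sotimes_Z A^o\cong\shom_Z(A,A)$, and choose a $Z$-linear projection $\pi:A\to Z$ with $\pi(1)=1$. Then the map
\[
(\pi\sotimes\pi)\circ\sigma|_Z : Z\to A\sotimes_R A^o\to Z\sotimes_R Z
\]
should be the desired splitting. I still need to check that it is $Z^e$-linear and a section of multiplication. Linearity comes from $\sigma$ being $A^e$-linear together with $Z$-linearity of $\pi$. Multiplication compatibility reduces to $\pi\circ\mu=\mu\circ(\pi\sotimes\pi)$ on the image of $\sigma(1)$. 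I expect this to fail for a general $\pi$. The fix is to use the classical trick: replace $\pi$ by a trace-type map built from the progenerator structure, so that the identity holds on $A^A$-centralizing elements. This is the step I expect to be the main obstacle, and I would handle it levelwise and then check simplicial naturality.

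\textbf{Step 3: the converse is transitivity of separability.} Given splittings $\sigma_1:A\to A\sotimes_Z A^o$ and $\sigma_2:Z\to Z\sotimes_R Z$, write $e_1=\sum x_i\otimes y_i$ and $e_2=\sum z_j\otimes w_j$ for the degreewise idempotents. Then $\sum x_i z_j\otimes w_j y_i$ defines a separability element for $A$ over $R$. To show this is simplicial, I would phrase it functorially: $\sigma$ is the composite
\[
A\to A\sotimes_Z A^o\cong A\sotimes_Z Z\sotimes_Z A^o\xrightarrow{\ \sigma_2\ } A\sotimes_Z(Z\sotimes_R Z)\sotimes_Z A^o\cong A\sotimes_R A^o .
\]
Each map here is a simplicial $A^e$-module map. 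The remaining checks are that this composite splits $\mu$, which is the standard computation, and that it is $A^e$-linear, which uses centrality of $Z$.
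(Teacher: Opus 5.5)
Your Steps 1 and 3 are fine. Step 1 is the paper's Proposition~\ref{yypro5.10}(2). Step 3 is a correct, more hands-on replacement for Proposition~\ref{yypro5.10}(1), which the paper gets by composing separable enriched functors; your map $x\otimes y\mapsto\sum_j xz_j\otimes w_jy$ is $Z$-balanced exactly because $(z\otimes 1)e_2=(1\otimes z)e_2$ and $Z$ is central.

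The gap is Step 2, which is the real content of the forward direction, and you leave it open. Linearity is not the problem. For any $Z$-linear $\pi,\pi'\colon A\to Z$, the element $f=(\pi\otimes\pi')(e)\in(Z\sotimes_R Z)_0$ is killed by $J_{Z/R}$, since $(z\otimes 1)s_0^n(e)=(1\otimes z)s_0^n(e)$ for $z\in Z_n$. Your candidate map is $z\mapsto(z\otimes1)f$. The problem is that $\mu_{Z/R}(f)=\sum_i\pi(x_i)\pi'(y_i)$ need not be $1$, or even a unit. For $A=M_2(\Bbbk)\Delta^0$ over $R=Z=\Bbbk\Delta^0$, take $e=E_{11}\otimes E_{11}+E_{21}\otimes E_{12}$ and $\pi=\pi'\colon X\mapsto X_{22}$, so that $\pi(1)=1$; then $f=0$. ``Replace $\pi$ by a trace-type map'' does not name a construction, and no single $\pi\otimes\pi$ can be expected to work. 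Also, the retraction $A\to Z$ comes from Lemma~\ref{yylem4.5} or Remark~\ref{yyrem6.2}(2). It does not come from the progenerator property, which only makes $Z$ a summand of some $A^{\oplus t'}$.

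What is missing is any real use of the fact that $A$ is finitely generated projective over $Z$. The paper (Theorem~\ref{yythm6.4} followed by Proposition~\ref{yypro5.10}(3)) uses this to avoid constructing an idempotent at all:
\begin{itemize}
\item By Lemma~\ref{yylem5.3}(1), $A$ is a summand of $Z^{\oplus m}$, so $A\sotimes_R A^o$ is a summand of $(Z\sotimes_R Z)^{\oplus m^2}$ and is $Z\sotimes_R Z$-projective.
\item $A$ is an $A^e$-summand of $A^e$, hence $Z\sotimes_R Z$-projective.
\item $Z$ is a $Z$-summand of some $A^{\oplus t}$. Since $Z$ is commutative and central, both $Z\sotimes_R Z$-actions factor through multiplication, so $Z$ is $Z\sotimes_R Z$-projective and Proposition~\ref{yypro2.7} applies.
\end{itemize}
If you want to keep your explicit approach, sum over a dual basis instead of using a single tensor. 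From the proof of Lemma~\ref{yylem5.3}(1), take $a_k\in A_0$ and $q_k\in\Hom_{Z\sMod}(A,Z)$ with $y=\sum_k s_0^n(a_k)q_k(y)$. Let $\pi=P$ be the $Z$-linear retraction of Lemma~\ref{yylem4.5}, and set $f=\sum_{i,k}\pi(x_ia_k)\otimes q_k(y_i)$. Each summand is $(\pi(-\cdot a_k)\otimes q_k)(e)$ with both maps $Z$-linear and simplicial, so $J_{Z/R}f=0$ as above. By $Z$-linearity of $\pi$, $\mu_{Z/R}(f)=\pi(\sum_i x_i\sum_k a_kq_k(y_i))=\pi(\sum_i x_iy_i)=\pi(1)=1$. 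Hence $f$ is a separability idempotent for $Z$ over $R$.
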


\subsubsection{Classifications}
We classify certain families of separable simplicial algebras which indicate some rigidity of separability for simplicial algebras. In this
subsection, $\Bbbk$ is a base commutative ring.

\begin{theorem}[Theorem \ref{yythm2.11}]
\label{yythm0.4}
Suppose $R=\Bbbk$ is a commutative ring,
viewed as a constant simplicial ring.
Let $A$ be a  simplicial algebra over $R$.
Then $A$ is separable over $\Bbbk$ if and only if $A\cong A_0\Delta^0 :=\Bbbk\Delta^0 \sotimes_{\Bbbk} A_0$ and $A_0$ is an ordinary separable 
algebra over $\Bbbk$.
\end{theorem}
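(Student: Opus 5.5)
The proof has two directions, and the nontrivial one is "separable implies constant". It rests on a single observation about the simplicial structure of a separability idempotent. Throughout, tensor products over the constant ring $\Bbbk$ are degreewise, so $(A\sotimes_\Bbbk A^o)_n = A_n\otimes_\Bbbk A_n^o$.

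For the easy direction, suppose $A_0$ is an ordinary separable $\Bbbk$-algebra and $A=A_0\Delta^0$ is constant. Then $A^e$ is the constant simplicial algebra on $A_0\otimes A_0^o$. The ordinary $A_0^e$-bimodule splitting $A_0\to A_0^e$ of $\mu$ is a map of constant simplicial modules, so it splits $\mu$ in $A^e\sMod$.

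For the converse, suppose $\mu: A^e\to A$ has a splitting $s$ in $A^e\sMod$. Set $e=s(1)\in A^e_0=A_0\otimes A_0$, using that the unit $1\in A_0$ generates the unit simplicial submodule. Bimodule linearity of $s_0$ shows $A_0$ is separable with separability idempotent $e$. Because $s$ is simplicial, $s_n(1_n)=\sigma^*(e)$, where $\sigma:[n]\to[0]$ and $1_n$ is the degenerate unit. Hence in every degree $n$ the element $e_n=\sigma^*(e)$ is a separability idempotent for $A_n$. Moreover, $e_n$ lies in the image of $A_0\otimes A_0\to A_n\otimes A_n$ under the total degeneracy $\sigma^*$.

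The key step is to show that every face map $d_i:A_n\to A_{n-1}$ is injective, or equivalently that all degeneracies are isomorphisms. Write $e=\sum x_j\otimes y_j$ with $x_j,y_j\in A_0$, so that $\sum x_jy_j=1$ and $ae=ea$ in each degree. For $a\in A_n$ we then have
\[
a=\sum_j \sigma^*(x_j)\,a\,\sigma^*(y_j),
\]
but this formula alone does not force $a$ to be degenerate. Instead, I would use the $A^e$-linearity of $s$ applied to $a\in A_n$: $s_n(a)=(a\otimes 1)e_n=e_n(1\otimes a)$. Then apply the face maps $d_0$ and $d_1$ to $a\in A_1$ and compare the results in $A_0\otimes A_0$.

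Cleaner, I would use the normalized complex. Since $e$ is degenerate, the splitting gives a retraction of $A\sotimes A^o\to A$ that commutes with everything. The statement then reduces to showing that a simplicial $\Bbbk$-module $M$ which is a direct summand, as an $A^e$-module, of $A^e$ must be constant. The cleanest route I expect is this: the map $A\to A^e$, $a\mapsto (a\otimes 1)e$, has image inside $A\otimes\sigma^*(A_0)$, because $e$'s right factors are degenerate. Applying $\mu$ recovers $a=\sum a\sigma^*(x_j)\sigma^*(y_j)$. Symmetrically, using $e(1\otimes a)$, we get $a\otimes$-components in $\sigma^*(A_0)\otimes A$. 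Intersecting, $s_n(a)\in \sigma^*(A_0)\otimes\sigma^*(A_0)$, since $\Bbbk$-flatness issues can be handled via the degreewise splitting of $\sigma^*:A_0\to A_n$ by any vertex map. Then $a=\mu s(a)\in\sigma^*(A_0)$ for all $a$, so every simplex is totally degenerate, and $A\cong A_0\Delta^0$.

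The main obstacle is this intersection argument, namely justifying $(A_n\otimes N)\cap(N\otimes A_n)=N\otimes N$ for the summand $N=\sigma^*(A_0)$. I would handle it with the retraction $r=\sigma^*\circ v^*:A_n\to N$, where $v:[0]\to[n]$ is a vertex. This retraction exhibits $N$ as a $\Bbbk$-module direct summand of $A_n$, and the intersection claim then follows by decomposing $A_n=N\oplus K$ and comparing components in $A_n\otimes A_n$.
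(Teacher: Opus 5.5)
Your final argument is correct and is essentially the paper's proof. Your retraction $r=\sigma^*v^*$ is exactly the idempotent algebra endomorphism $s_0^nd_0^n$ to which the paper applies Lemma~\ref{yylem2.10}. The paper applies $r\otimes \mathrm{Id}$ to $(a\otimes 1)s_0^n(e)=(1\otimes a)s_0^n(e)$ and then $\mu$ to get $r(a)=a$; you instead split $A_n=N\oplus\ker r$ and read off that your splitting value $s_n(a)$ lies in $N\otimes N$, so $a=\mu(s_n(a))\in N$. That variant needs only a $\Bbbk$-linear splitting and the fact that $N$ is a subalgebra.

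The earlier exploratory paragraphs play no role and should be deleted. This includes the injectivity of face maps and the normalized-complex ``reduction'', which is circular as stated.
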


Suppose $\Bbbk$ is a field.
While an ordinary separable 
$\Bbbk$-algebra has global dimension zero, the category of left simplicial modules over a separable simplicial 
$\Bbbk$-algebra has infinite global dimension
(Corollary \ref{yycor2.12}).

\begin{proposition}[Proposition \ref{yypro6.7}]
\label{yypro0.5}
    Let $A$ be a simplicial algebra over $R$.
    Then $A$ is Azumaya over $R$ if and only if $A \cong R \sotimes_{R_0} A_0$ as simplicial $R$-algebras and $A_0$ is an Azumaya $R_0$-algebra.
\end{proposition}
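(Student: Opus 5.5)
We prove Proposition \ref{yypro0.5}: $A$ is Azumaya over $R$ if and only if $A \cong R \sotimes_{R_0} A_0$ with $A_0$ an Azumaya $R_0$-algebra. The plan is to reduce to the progenerator characterization of Theorem \ref{yythm0.2}(3). Then we exploit the rigidity of simplicial direct summands of free modules: a simplicial $R$-module that is a retract of $R^{\oplus t}$ is determined by its degree-zero part. Here $R_0$ denotes the degree-zero ring, and $R \sotimes_{R_0} A_0$ means the simplicial $R$-module $[n]\mapsto R_n\otimes_{R_0}A_0$, with $R_0\to R_n$ the degeneracy map and the structure maps acting on $R$ only.

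\emph{Converse direction.} Suppose $A_0$ is Azumaya over $R_0$. Base change along $R_0\Delta^0 \to R$ is a symmetric monoidal functor $R \sotimes_{R_0} -$ from (constant) $R_0$-modules to $R\sMod$. It therefore preserves the splitting of $\mu$, so $A=R\sotimes_{R_0}A_0$ is separable.

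For centrality, note that $A_0$ is a finitely generated projective $R_0$-module. Hence each $A_n=R_n\otimes_{R_0}A_0$ is an Azumaya $R_n$-algebra, with center $R_n$ by the classical base change of centers for Azumaya algebras. The simplicial center is computed levelwise by Definition \ref{yydef1.5}, so $Z(A)=R$. Alternatively, one can verify condition (3) of Theorem \ref{yythm0.2}. The map $A^e\to\shom_R(A,A)$ is the base change of the isomorphism $A_0^e\cong \Hom_{R_0}(A_0,A_0)$, because $\shom$ commutes with base change for finitely generated projective modules. Likewise, $A$ is a summand of $R^{\oplus t}$ and $R$ is a summand of $A^{\oplus t'}$ by base change of the corresponding ordinary splittings, using Lemma \ref{yylem5.3}.

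\emph{Forward direction.} Let $A$ be Azumaya over $R$. By Theorem \ref{yythm0.2}, $A$ is a simplicial $R$-progenerator, so by Lemma \ref{yylem5.3} it is a retract of $R^{\oplus t}$ via an idempotent $e\in \shom_R(R^{\oplus t},R^{\oplus t})$. The key point is that $\shom_R(R^{\oplus t},R^{\oplus t})_0 = M_t(R_0)$. A simplicial endomorphism of $R^{\oplus t}$ is determined by the image of the generators in degree zero, since $R^{\oplus t}$ is free on degree-zero generators. Hence $e$ is the base change of an idempotent matrix $e_0\in M_t(R_0)$, which gives $A\cong R\sotimes_{R_0}(e_0R_0^{\oplus t}) = R\sotimes_{R_0}A_0$ as simplicial $R$-modules.

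Next we compare the multiplications. The natural map $R\sotimes_{R_0}A_0\to A$, $r\otimes a\mapsto r\cdot s(a)$ (with $s$ the iterated degeneracy), is an $R$-module map. Since $s$ is multiplicative, it is an algebra map, and by the retract argument it is an isomorphism. It remains to see that $A_0$ is Azumaya over $R_0$. The degree-zero functor $(-)_0: R\sMod\to R_0\Mod$ is strong monoidal, since $(M\sotimes_R N)_0=M_0\otimes_{R_0}N_0$. Applying it to the splitting of $\mu$ and to the center gives $A_0$ separable with $Z(A_0)=Z(A)_0=R_0$.

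The main obstacle is justifying that retracts of free simplicial modules descend, namely the identification of simplicial endomorphisms of $R^{\oplus t}$ with $M_t(R_0)$, and making sure the isomorphism is one of algebras and not merely of modules. I would handle this through the Yoneda-type adjunction: $\shom_R(R,M)_0=\Hom_{R\sMod}(R,M)$ is the set of elements of $M_0$, since $R=R\sotimes \Bbbk\Delta^0$ is generated in degree zero.
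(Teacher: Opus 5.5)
Your proposal is correct and follows essentially the paper's route. In the forward direction you use that $A$ is a retract of $R^{\oplus t}$ and that $R$-smodule maps out of $R^{\oplus t}$ are determined by degree-zero data; the paper makes this explicit by setting $a_j=\pi(u_j)$ and writing down the inverse $g$ of the natural map $R\sotimes_{R_0}A_0\to A$, and your idempotent matrix $e_0\in M_t(R_0)$ plus naturality repackages that. The converse is base change together with the classical levelwise centre computation. The one step to tighten is $Z(A_0)=Z(A)_0$ in the forward direction. It does not follow from strong monoidality of $(-)_0$, since the degree-zero part of the simplicial centre can be smaller than $Z(A_0)$ in general. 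It does hold by Proposition \ref{yypro4.6} because $A$ is separable, or directly from the isomorphism $A_k\cong R_k\otimes_{R_0}A_0$ you have already established.
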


\begin{proposition}[Corollary \ref{yycor6.8}]
\label{yypro0.6}
        Let $A$ be a  simplicial algebra over $R$.
        The following are equivalent.
        \begin{enumerate}
            \item[(1)] $A$ is separable over $R$.

            \item[(2)]  $Z(A)$ is separable over $R$, 
            $A_0$ is an Azumaya $Z(A)_0$-algebra,
            and
            $$A\cong Z(A) \sotimes_{Z(A)_0} A_0$$ as simplicial $Z(A)$-algebras.
        \end{enumerate} 
\end{proposition}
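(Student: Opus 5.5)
We deduce Proposition \ref{yypro0.6} from Theorem \ref{yythm0.3} together with Proposition \ref{yypro0.5}, applied over the simplicial center. Throughout, write $Z=Z(A)$. This is a commutative simplicial ring, and $A$ is a simplicial $Z$-algebra.

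\emph{(1) $\Rightarrow$ (2).} Suppose $A$ is separable over $R$.
\begin{enumerate}
\item[(a)] By Theorem \ref{yythm0.3}, $A$ is Azumaya over $Z$ and $Z$ is separable over $R$.
\item[(b)] Apply Proposition \ref{yypro0.5} with base ring $Z$ in place of $R$. It is legitimate because it is stated for an arbitrary commutative simplicial ring. This gives $A\cong Z\sotimes_{Z_0}A_0$ as simplicial $Z$-algebras, with $A_0$ an Azumaya $Z_0$-algebra.
\end{enumerate}
All three conditions in (2) then hold.

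\emph{(2) $\Rightarrow$ (1).} Assume (2).
\begin{enumerate}
\item[(a)] Since $A_0$ is Azumaya over $Z_0$ and $A\cong Z\sotimes_{Z_0}A_0$ as simplicial $Z$-algebras, the converse direction of Proposition \ref{yypro0.5} over the base $Z$ shows that $A$ is Azumaya over $Z$.
\item[(b)] Since $Z$ is separable over $R$ by hypothesis, the converse direction of Theorem \ref{yythm0.3} shows that $A$ is separable over $R$.
\end{enumerate}

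The one point needing care is a compatibility check in applying Proposition \ref{yypro0.5} over $Z$. In that proposition, "Azumaya over $Z$" means central over $Z$, so the center of $A$ regarded as a $Z$-algebra must be $Z$ itself. The $Z$-algebra structure on $A$ is the inclusion $Z\hookrightarrow A$, and the center of $A$ does not depend on which base it is computed over, so this holds. Similarly, the ring $Z_0$ in Proposition \ref{yypro0.5} should agree with $Z(A)_0$ as it appears in (2). This amounts to checking that the degree-zero part of the simplicial center $Z(A)$ of Definition \ref{yydef1.5} is the base used there; it is immediate from the definitions. With this in place, the corollary is a formal combination of the two earlier results.
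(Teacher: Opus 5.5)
Your proof is correct and is essentially the paper's. The forward direction is exactly Theorem \ref{yythm6.6} followed by Proposition \ref{yypro6.7} over the base $Z(A)$. In the reverse direction the paper applies Lemma \ref{yylem4.1} to get separability over $Z(A)$ and then Proposition \ref{yypro5.10}(1), which is exactly what your appeal to the converses of Proposition \ref{yypro6.7} and Theorem \ref{yythm6.6} unpacks to; the centrality part of ``Azumaya over $Z(A)$'' is automatic there.
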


    Proposition 0.6 shows that a separable simplicial \(R\)-algebra is determined by the following data:
\begin{enumerate}
    \item  a commutative separable simplicial $R$-algebra (the center $Z(A)$).

    \item an ordinary Azumaya $Z(A)_0$-algebra (the algebra $A_0$).
\end{enumerate} 
The second datum is controlled, up to Morita equivalence, by the Brauer group $Br(Z(A)_0)$.
Thus the remaining problem is to classify the commutative separable simplicial $R$-algebras that can occur as centers.

\begin{lemma}[Lemma \ref{yylem6.10}]
\label{yylem0.7}
Let $C$ be a commutative simplicial $R$-algebra.
Then $C\cong Z(A)$ for some separable simplicial $R$-algebra $A$ if and only if $C$ is separable over $R.$
\end{lemma}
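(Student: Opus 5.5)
One direction follows from Theorem \ref{yythm0.3}; the other is a direct construction, taking $A = C$.

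Suppose first that $C \cong Z(A)$ for some separable simplicial $R$-algebra $A$. By Theorem \ref{yythm0.3}, separability of $A$ over $R$ implies that $Z(A)$ is separable over $R$. The only point to check is that separability is invariant under isomorphism of simplicial $R$-algebras. This holds because an isomorphism $C \cong Z(A)$ induces an isomorphism $C^e \cong Z(A)^e$ compatible with the multiplication maps. A splitting of $\mu_{Z(A)}$ therefore transports to a splitting of $\mu_C$.

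Conversely, suppose $C$ is commutative and separable over $R$, and set $A := C$. Then $A$ is separable over $R$ by hypothesis. It remains to show that $Z(C) = C$ for a commutative simplicial algebra $C$. Definition \ref{yydef1.5} describes the simplicial center, presumably levelwise or as the centralizer $C^{C}$ of $C$ viewed as a $C^e$-module (Definition \ref{yydef3.7}). Either way, it consists of those simplices whose left and right actions on $C$ agree, in the enriched sense. Commutativity of $C$ means the multiplication $C \sotimes_R C \to C$ coincides with its composite with the twist. Hence every simplex of every level is central, and the inclusion $Z(C) \hookrightarrow C$ is an equality of simplicial $R$-algebras.

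The only step needing care is identifying $Z(C)$ with $C$ under the paper's precise definition. If the center is defined via the enriched hom $\shom_{C^e}(C, C)$, or as an equalizer of the two maps $C \to \shom_R(C,C)$ given by left and right multiplication, I would verify levelwise that commutativity makes these two maps equal. The equalizer is then all of $C$, and the remaining verifications are routine.
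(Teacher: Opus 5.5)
Your proof is correct and follows the paper's argument. The forward direction is Theorem \ref{yythm6.6} (separability of $A$ forces separability of $Z(A)$), and the converse takes $A=C$, using $Z(C)=C$ for commutative $C$. That equality is immediate from Definition \ref{yydef1.5}, since $Z(C_k)=C_k$ for every $k$, so there was no need to hedge between candidate definitions of the center.
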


\subsubsection{Commutative simplicial 
separable algebras}
We characterize commutative simplicial 
separable algebras as follows.

\begin{theorem}[Theorem \ref{yythm6.11}]
\label{yythm0.8}
Let $C$ be a commutative simplicial $R$-algebra.
The following are equivalent.
\begin{enumerate}
\item[(1)] $C$ is separable over $R$. 
\item[(2)] $C_0$ is separable over $R_0$ and the canonical simplicial ring morphism
$$\phi: R \sotimes_{R_0} C_0 \to C \qquad  r\otimes c \mapsto rs_0^n(c),
\qquad \forall \;r\otimes c \in (R \sotimes_{R_0} C_0)_n
$$ 
is a ring epimorphism levelwise.
\end{enumerate}
\end{theorem}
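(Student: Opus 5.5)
We argue levelwise, reducing everything to the classical theory of commutative separable algebras over $R_n$. The basic tool is a simplicial version of the classical fact that separability of a commutative algebra is witnessed by an idempotent. Since $C$ is commutative, $C^e=C\sotimes_R C$ is commutative, and a splitting $\sigma: C\to C^e$ of $\mu$ as $C^e$-modules is determined by $e=\sigma(1)$, a degree-$0$ element. The identity $1\in C_0$ is compatible with the degeneracies, so $e_n:=\sigma_n(1_n)=s_0^n(e_0)$. The splitting conditions become: $e_0\in (C_0\otimes_{R_0}C_0)$ is a separability idempotent, i.e.\ $\mu(e_0)=1$ and $(c\otimes 1-1\otimes c)e_0=0$. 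Here we are using the degree-$0$ form $(C\sotimes_R C)_0=C_0\otimes_{R_0}C_0$, and we also need the annihilation identity in every degree $n$ for $e_n=s_0^n(e_0)$ against all $c\in C_n$. Conversely, given such a family, $\sigma(x)=x e$ defines a simplicial $C^e$-module splitting, because $e$ is a degenerate element and hence is compatible with all faces and degeneracies. So the first step is: $C$ is separable over $R$ iff $C_0$ has a separability idempotent $e_0$ whose degeneracies $s_0^n(e_0)$ are separability idempotents of $C_n$ over $R_n$.

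\textbf{(1)$\Rightarrow$(2).} Separability of $C_0$ over $R_0$ is immediate from the degree-$0$ condition. For the epimorphism claim, fix $n$. Since $(R\sotimes_{R_0}C_0)_n=R_n\otimes_{R_0}C_0$, the map $\phi_n:R_n\otimes_{R_0}C_0\to C_n$ is a morphism of commutative $R_n$-algebras. Its target $C_n$ is separable over $R_n$, with idempotent $e_n=s_0^n(e_0)$, and this idempotent lies in the image of $\phi_n\otimes\phi_n$. The key computation is then as follows. Given $c\in C_n$, write $c=c\cdot\mu(e_n)$ and use $(c\otimes 1)e_n=(1\otimes c)e_n$. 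This shows that any two ring maps $f,g:C_n\to T$ agreeing on the image of $\phi_n$ must agree on $c$. Writing $e_n=\sum x_i\otimes y_i$ with $x_i,y_i$ in the image, we get
\[
f(c)=\sum f(c x_i) g(y_i)=\sum f(x_i)g(c y_i)=g(c).
\]
Here the middle equality uses the identity in $C_n\otimes_{R_n}C_n$ pushed forward along $f\otimes g$ into $T$; this is legitimate because both maps are $R_n$-linear, since $R_n$ lies in the image of $\phi_n$. So $\phi_n$ is an epimorphism.

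\textbf{(2)$\Rightarrow$(1).} Let $e_0$ be a separability idempotent of $C_0$ over $R_0$. Then $1\otimes e_0$ is a separability idempotent of $R_n\otimes_{R_0}C_0$ over $R_n$, by base change. Its image under $\phi_n\otimes\phi_n$ is $s_0^n(e_0)$, so it suffices to show that $s_0^n(e_0)$ remains a separability idempotent of $C_n$ over $R_n$. The condition $\mu=1$ is preserved. The real issue, and the main obstacle, is the annihilation condition for elements $c\in C_n$ outside the image of $\phi_n$. Here we use that $\phi_n$ is a ring epimorphism, via the classical characterization: for a ring epimorphism $B\to C_n$, the multiplication $C_n\otimes_B C_n\to C_n$ is an isomorphism. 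Consider the image of $(c\otimes 1-1\otimes c)e$ in $C_n\otimes_{R_n}C_n$. We expect to rewrite it as follows. The ideal $J=\ker(C_n\otimes_{R_n}C_n\to C_n)$ is generated by the elements $c\otimes1-1\otimes c$, and $e$ kills $\ker(B\otimes_{R_n} B\to B)$. Because $C_n\otimes_B C_n\cong C_n$, the ideal $J$ is generated, as an ideal of $C_n\otimes_{R_n}C_n$, by the image of that kernel. More precisely, $C_n\otimes_{R_n}C_n\to C_n\otimes_BC_n$ has kernel generated by the elements $b\otimes1-1\otimes b$ for $b\in B$. Hence $eJ=0$, which is exactly the annihilation condition.

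Finally, the resulting levelwise splittings $x\mapsto xe_n$ assemble simplicially by the first step. This gives separability of $C$, and the proof is complete once the compatibility of $s_0^n(e_0)$ with the faces is checked, which is routine.
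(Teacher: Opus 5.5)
Your proof is correct, but it is organized differently from the paper's, most visibly in (2)$\Rightarrow$(1).

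\textbf{The direction (2)$\Rightarrow$(1).} Write $A=R\sotimes_{R_0}C_0$. The paper argues structurally in three steps:
\begin{enumerate}
\item[(a)] The epimorphism condition makes $\mu_{C/A}\colon C\sotimes_A C\to C$ an isomorphism, so $C$ is trivially separable over $A$.
\item[(b)] $A$ is separable over $R$ by base change (Lemma~\ref{yylem4.1}).
\item[(c)] Transitivity (Proposition~\ref{yypro5.10}(1), proved via separable enriched functors) then gives separability of $C$ over $R$.
\end{enumerate}
You instead work entirely with idempotents, levelwise. You push the base-changed idempotent $1\otimes e_0$ along $\phi_n\otimes\phi_n$ to obtain $s_0^n(e_0)$. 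You then observe that
\[
\ker\bigl(C_n\otimes_{R_n}C_n\to C_n\bigr)=\ker\bigl(C_n\otimes_{R_n}C_n\to C_n\otimes_{A_n}C_n\bigr).
\]
This kernel is generated by the elements $\phi_n(b)\otimes 1-1\otimes\phi_n(b)$, and these kill $s_0^n(e_0)$.

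In effect this is an explicit unwinding of transitivity in the special case where the upper idempotent is $1\otimes 1$. It buys you three things:
\begin{enumerate}
\item[(i)] It avoids both Lemma~\ref{yylem4.1} and the enriched-functor machinery.
\item[(ii)] It exhibits the separability idempotent directly: the one for $C_0$ over $R_0$ already works for $C$ over $R$. This is consistent with the uniqueness in Remark~\ref{yyrem2.9}(1).
\item[(iii)] It reduces the simplicial compatibility to the trivial fact that $\theta^*s_0^n=s_0^k$.
\end{enumerate}
The paper's route is more conceptual and reuses results that hold for noncommutative algebras as well.

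\textbf{The direction (1)$\Rightarrow$(2).} Both arguments rest on the same key observation: $s_0^n(e)$ lies in the image of $\phi_n\otimes\phi_n$. They package it differently.
\begin{enumerate}
\item[(i)] The paper converts this into $c\otimes_{A_n}1=1\otimes_{A_n}c$. From there it invokes the Stacks Project criterion that $C_n\otimes_{A_n}C_n\to C_n$ is an isomorphism.
\item[(ii)] You verify the defining property of an epimorphism directly, using the additive map $a\otimes b\mapsto f(a)g(b)$ on $C_n\otimes_{R_n}C_n$. This map is well defined because $f$ and $g$ agree on $R_n$ and $C_n$ is commutative.
\end{enumerate}
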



\subsubsection{Brauer group}
The definition of the Brauer group of a commutative simplicial ring $R$ is
given in Definition \ref{yydef7.2}. 
We show that it is isomorphic to the Brauer group of the degree-zero component $R_0.$

\begin{theorem}
[Theorem \ref{yythm7.4}]
\label{yylem0.9}
{\rm{(Triviality Theorem)}}
Let $R$ be a commutative simplicial ring.
Then $Br(R)\cong Br(R_0).$
\end{theorem}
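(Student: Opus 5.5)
The plan is to compare the two Brauer groups through the degree-zero functor and the base-change functor. On Azumaya algebras these are $A\mapsto A_0$ and $B\mapsto R\sotimes_{R_0} B$. I will show that they induce mutually inverse group homomorphisms $Br(R)\rightleftarrows Br(R_0)$. The key input is Proposition \ref{yypro0.5} (Proposition \ref{yypro6.7}). It says that every Azumaya simplicial $R$-algebra $A$ satisfies $A\cong R\sotimes_{R_0}A_0$ with $A_0$ Azumaya over $R_0$. Conversely, for any ordinary Azumaya $R_0$-algebra $B$, the algebra $R\sotimes_{R_0}B$ is Azumaya over $R$: its degree-zero part is $R_0\otimes_{R_0}B=B$, so the ``if'' direction of Proposition \ref{yypro0.5} applies.

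First I check that both assignments respect the relation defining $Br$ in Definition \ref{yydef7.2}. I take that relation to be Morita equivalence, or equivalently $A\sotimes_R \shom_R(P,P)\cong A'\sotimes_R\shom_R(P',P')$ for simplicial $R$-progenerators $P,P'$. Base change is straightforward: if $B\otimes_{R_0}\mathrm{End}(Q)\cong B'\otimes_{R_0}\mathrm{End}(Q')$ for progenerators $Q,Q'$ over $R_0$, apply $R\sotimes_{R_0}-$. By Lemma \ref{yylem5.3}, $R\sotimes_{R_0}Q$ is a summand of some $R^{\oplus t}$ and conversely, hence a simplicial progenerator. Since $Q$ is finitely generated projective, $R\sotimes_{R_0}\mathrm{End}_{R_0}(Q)\cong \shom_R(R\sotimes_{R_0}Q,R\sotimes_{R_0}Q)$; I would verify this first for $Q=R_0^{n}$ and then pass to summands. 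Both functors are monoidal, with $(A\sotimes_R A')_0=A_0\otimes_{R_0}A'_0$ and $(R\sotimes_{R_0}B)\sotimes_R(R\sotimes_{R_0}B')\cong R\sotimes_{R_0}(B\otimes_{R_0}B')$, so the induced maps are group homomorphisms. They send $R\mapsto R_0$ and $R_0\mapsto R$, so identities go to identities.

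For the degree-zero functor, the levelwise description of $\sotimes$ and $\shom$ makes $(-)_0$ carry simplicial progenerators to progenerators, again via the summand characterization of Lemma \ref{yylem5.3}. It also gives $\shom_R(P,P)_0\cong \mathrm{Hom}_{R_0}(P_0,P_0)$. This identification is where I expect the main obstacle. The degree-zero part of the internal hom is the set of simplicial maps $R\Delta^0\sotimes P\to P$, which is not obviously $\mathrm{Hom}_{R_0}(P_0,P_0)$. To get around this I would use Proposition \ref{yypro0.5} a second time: $\shom_R(P,P)$ is itself Azumaya over $R$, because it is the Morita-trivial class, or by Theorem \ref{yythm0.2}(3). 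Hence it is determined by its degree-zero algebra $E=\shom_R(P,P)_0$, which is Azumaya over $R_0$. I then need $E$ to be split, that is, Brauer-trivial over $R_0$. For this I apply $(-)_0$ to the Morita equivalence between $R\sMod$ and $\shom_R(P,P)\sMod$ given by $P$, which yields $E\cong\mathrm{End}_{R_0}(P_0)$ with $P_0$ a progenerator. If that step resists, the fallback is to write $P$ as the image of an idempotent $e\in M_t(R)$ and note that $\shom_R(P,P)\cong eM_t(R)e$, whose degree-zero part is $e_0M_t(R_0)e_0=\mathrm{End}(P_0)$.

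Finally I compose the two maps. Starting from $[B]\in Br(R_0)$, we get $(R\sotimes_{R_0}B)_0=B$. Starting from $[A]\in Br(R)$, we get $R\sotimes_{R_0}A_0\cong A$ by Proposition \ref{yypro0.5}. The two homomorphisms are therefore mutually inverse, which gives $Br(R)\cong Br(R_0)$.
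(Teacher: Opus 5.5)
Your proof is correct and has the same skeleton as the paper's proof. Both use the maps $[A]\mapsto[A_0]$ and $[T]\mapsto[R\sotimes_{R_0}T]$, check monoidality, and invoke Proposition \ref{yypro6.7} to see that the two maps are mutually inverse.

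The real difference is in checking compatibility with $\sim$. You use the endomorphism-algebra criterion of Proposition \ref{yypro7.3}, which forces you to compare $\shom_R(P,P)$ with $\mathrm{End}_{R_0}(P_0)$. The paper instead uses the invertible-bimodule criterion of Theorem \ref{yythm5.8}(3). Because $\sotimes$ is computed levelwise, an invertible $(A,A')$-sbimodule $P$ gives an invertible $(A_0,A'_0)$-bimodule $P_0$. Likewise, an invertible $(T,T')$-bimodule $Q$ gives an invertible sbimodule $R\sotimes_{R_0}Q$. With that criterion, your ``main obstacle'' never arises.

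Your route still goes through, via the fallback. By the argument of Lemma \ref{yylem2.4}, $\Hom_{R\sMod}(R^{\oplus t},R^{\oplus t})\cong M_t(R_0)$. So a summand $P$ of $R^{\oplus t}$ is cut out by an idempotent $e\in M_t(R_0)$, acting by $s_0^n(e)$ in level $n$. Hence $P\cong R\sotimes_{R_0}P_0$ and $\shom_R(P,P)\cong R\sotimes_{R_0}\mathrm{End}_{R_0}(P_0)$. This settles both directions at once. It also gives a structural by-product the paper does not record: every sprogenerator, together with its endomorphism salgebra, is base-changed from degree $0$.

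Your first suggestion, that $\shom_R(P,P)$ is Azumaya ``because it is the Morita-trivial class,'' is not justified by anything in the paper. Theorem \ref{yythm6.4}(3) would require checking $E^e\cong\shom_R(E,E)$ for $E=\shom_R(P,P)$. The claim is also unnecessary. Taking degree $0$ of the canonical Morita context of $P_R$, whose maps are isomorphisms, already gives $\shom_R(P,P)_0\cong\mathrm{End}_{R_0}(P_0)$ by classical Morita theory.

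That last observation is exactly the paper's key step. Once you have it, you can apply it directly to an invertible $(A,A')$-sbimodule and drop Proposition \ref{yypro7.3} entirely. The paper's route buys brevity. Yours buys the explicit degree-zero description of progenerators and their endomorphism algebras, at the cost of more verification.
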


The above triviality theorem leads us 
to consider a simplicial version of
the Brauer group \cite{HSZZ27}, denoted by $\overset{s}{Br}(R)=(\overset{s}{Br}(R)_n)_{n\geq 0}$, such that $\overset{s}{Br}(R)_0=
Br(R)$. Currently, the simplicial
group $\overset{s}{Br}(R)$ is still 
mysterious to us and not easy to compute. 

\subsection{Organization of the paper}
\label{yysec0.5}
In Section \ref{yysec1}, we review basic definitions of simplicial algebras and simplicial modules.
We define simplicial separability and illustrate the notion by examples over a constant base simplicial ring and simplicial matrix rings in Section \ref{yysec2}.
Centralizer and separable enriched functors are introduced in Section \ref{yysec3}, 
while more properties of simplicial separability are derived in Section \ref{yysec4}.
We review and adapt the enriched Morita theory for simplicial algebras in Section \ref{yysec5}.
Azumaya simplicial algebras are studied in Section \ref{yysec6},
and the simplicial Auslander-Goldman Theorem is proved.
We conclude the paper by a short discussion on the Brauer groups of commutative simplicial rings in Section \ref{yysec7}.

\section{Preliminaries}
\label{yysec1}
In this paper we try to
provide a comprehensive introduction 
of separable simplicial algebras with 
many details.

We start with reviewing 
basic definitions on simplicial algebras and simplicial modules.
A ring is always an associative unital ring,
and a ring homomorphism preserves the unit of a ring.
The category of rings is denoted by $\Ring$.
An algebra is always an associative unital algebra.

\subsection{Simplicial objects}
\label{yysec1.1}
Let $\Delta$ denote the simplex category 
whose objects are totally ordered sets 
$\{[n]\mid n=0,1,2,\cdots\}$ where 
$[n] := \{0, 1, \ldots, n\}$ and 
whose morphisms are all non-strict order-preserving functions 
from $[k]\longrightarrow [n]$ 
for every pair of non-negative integers $k$ and $n$. 
Let $\mathcal{C}$ be any 
category. 
A {\it simplicial object} $X$ in $\mathcal{C}$ is a 
contravariant functor from the simplex category $\Delta$ to ${\mathcal C}$.
A simplicial object can be described as follows. 
For each $n$, $X_n:=X([n])$ is an object in ${\mathcal C}$, 
and for any morphism $\theta: [k]\to [n]$ in $\Delta$, 
there is an induced map $\theta^*_{X}: X_n\to X_k$ which is a morphism in ${\mathcal C}$. 
The morphisms $\{\theta^*_X\}$ are called the {\it simplicial structure maps} of $X$. 
For each $n$, 
$X_n$ is called the {\it level $n$} part of $X$. 
We may simply write $X$ as $\{X_n\}_{n\geq 0}$ 
if there is no confusion about the simplicial structure maps. 
Simplicial objects can also be defined by using 
face and degeneracy maps \cite[Proposition 8.1.3]{Weibel1994}.
For $n\geq 1$,
the degeneracy map $s_0^n$ is induced by the unique map $[n] \to [0]$ in $\Delta$;
similarly, $d_0^n$ is induced by the unique map $[0]\to [n]$ mapping $0$ to $n$ in $\Delta.$
For each $n\geq 0$, 
we use $\Delta^n$ to denote the representable functor $\Hom_{\Delta}(-,[n])$, 
which is a simplicial set by definition.

\subsection{Simplicial rings}
\label{yysec1.2}

A {\it simplicial ring}, or an {\it sring}, is a simplicial object in $\Ring$.
A special example of an sring is $\Bbbk\Delta^0$;
the constant sring with an ordinary ring $\Bbbk$ on every level and  all the simplicial structure maps equal the identity maps.
In particular, $\Z \Delta^0$ is the constant sring with $\Z$ levelwise.

The category of simplicial abelian groups $
\sAb$ with $\sotimes_{\Z}$ (levelwise tensor product over $\Z$) is a symmetric monoidal category with unit $\Z\Delta^0$.
An sring $R$ is equivalently a monoid object in $(\sAb, \sotimes_{\Z}, \Z \Delta^0)$ \cite[p.~503]{SS2000}.
Namely,
there is a simplicial abelian group morphism (the multiplication map)
$\mu_{R/\Z\Delta^0}: R \sotimes_{\Z} R \to R$
and another simplicial abelian group morphism (the unit map)
$\eta_{R/\Z\Delta^0}: \Z \Delta^0 \to R$
such that $(R, \mu_{R/\Z\Delta^0}, \eta_{R/\Z\Delta^0})$ satisfies 
the usual associativity and unitality axioms.

The tensor product of srings is the tensor product of the underlying simplicial abelian groups.
Let $\sRing$ be the category of srings.
As the category of monoids in $(\sAb, \sotimes_{\Z}, \Z \Delta^0)$,
$\sRing$ is a symmetric monoidal category \cite[Appendix A.1]{Stonek2018}.

\begin{definition}
\label{yydef1.1}
    Let $R$ be an sring.
    \begin{enumerate}
        \item[(1)] A two-sided/left/right simplicial ideal, or {\it two-sided/left/right sideal}, $I$ of $R$ 
        is a simplicial subset $I\subseteq R$ 
        such that $I_n$ is a two-sided/left/right ideal of $R_n$ for every $n\geq 0.$

        \item[(2)] A  simplicial subring, or {\it sub sring},
        $R'$ is a simplicial subset $R'\subseteq R$ such that $R'$ is an sring.
    \end{enumerate}
    Unless otherwise specified,
    an sideal will be a two-sided sideal.
\end{definition}

From now on, we always assume that $R$ is an sring.
The following agree with the standard definitions in \cite{PareigisI}, \cite[II.6]{Quillen1967}, and \cite[p.~498]{SS2000}.

\begin{definition}
\label{yydef1.2}
    A {\it left simplicial module} over $R$, or a {\it left $R$-smodule}, is a left module object over the monoid $R$ in $(\sAb, \sotimes_{\Z}, \Z \Delta^0).$
    That is, a left $R$-smodule is a simplicial abelian group $M$ with a simplicial abelian group morphism
    $\mu_M:  R \sotimes_{\Z} M \to M$
    such that $(M, \mu_M)$ satisfies 
    the usual associativity
    and unitality axioms.
\end{definition}

\begin{definition}
\label{yydef1.3}
    Let $M$ and $N$ be left $R$-smodules.
    A {\it left $R$-smodule morphism} $f: M \to N$ is a simplicial abelian group morphism $f: M \to N$ such that, for every $r\in R_n$ and $m\in M_n,$
    $f(rm)=rf(m)$.
\end{definition}

\begin{remark}
\label{yyrem1.4}
Let $R$ be an sring.
\begin{enumerate}
\item[(1)] Let $M$ be a left $R$-smodule.
    Then we write $\mu_M(r \otimes m):= r\cdot_M m$ for any $r\otimes m\in (R\sotimes_{\Z} M)_n$ and $n\geq 0.$
    Often we ignore the dot when the context of the action is clear.
    
    \item[(2)] Right $R$-smodules and right $R$-smodule morphisms can be defined analogously.
    Let $R\sMod$ be the category of left $R$-smodules, 
    and $\sMod R$ be the category of right $R$-smodules.

    \item[(3)] Since an sring is a functor $\Delta^{op} \to \Ring$,
    postcomposing the opposite ring functor $(-)^{o}: \Ring \to \Ring$ induces 
    the {\it opposite sring} of $R$, denoted by $R^{o}:= \{R_n^{o}\}$ with simplicial structure maps $\theta^*_{R^{o}}:=(\theta^*_R)^{o}$  for every $\theta:[m] \to [n]$ in $\Delta$.

    \item[(4)] Note that $\sMod R$ is equivalent to $R^{o}\sMod$.
When $R$ is a commutative sring,
$\sMod R$ is equivalent to $R \sMod$.
In this case, we just say $R$-smodules without indicating whether the smodules are left or right,
and we denote the category of $R$-smodules by $R\sMod$.
\end{enumerate}
\end{remark}

\subsection{Simplicial algebras}
\label{yysec1.3}

The following are mostly standard definitions; see, for instance,  \cite[§VII]{EKMM1997} and \cite[p.~499]{SS2000}.

\begin{definition}
\label{yydef1.5}
Let $R$ be an sring.
The {\it center} of $R$ is 
$Z(R):=\{Z(R)_n\}$ where 
\begin{align*}
Z(R)_n:= \{x\in R_n
\mid 
\theta^*_R(x)\in Z(R_k), \forall \theta:[k] \to [n] \text{ in } \Delta\}.
\end{align*}
\end{definition}

It is easy to construct a nonseparable salgebra $A$ such that $Z(A)_n \neq  Z(A_n)$ for some $n.$ 

\begin{lemma}
\label{yylem1.6}
The center $Z(R)$ of an sring $R$ is a sub sring.
\end{lemma}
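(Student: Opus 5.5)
We need to check three things: that $Z(R)$ is a simplicial subset of $R$, that each $Z(R)_n$ is a subring of $R_n$, and that the restricted structure maps are ring homomorphisms. The last is automatic once the first two hold, since the structure maps of $R$ are already ring homomorphisms. The guiding observation is that $Z(R)_n$ is an intersection of preimages of ordinary centers. Explicitly,
\[
Z(R)_n=\bigcap_{\theta:[k]\to[n]} (\theta^*_R)^{-1}\bigl(Z(R_k)\bigr).
\]

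\emph{Subring at each level.} For each $\theta:[k]\to[n]$, the map $\theta^*_R:R_n\to R_k$ is a unital ring homomorphism. The ordinary center $Z(R_k)$ is a unital subring of $R_k$. The preimage of a unital subring under a unital ring homomorphism is again a unital subring. An intersection of unital subrings is a unital subring, so $Z(R)_n$ is a subring of $R_n$. Concretely, if $x,y\in Z(R)_n$, then $\theta^*(x\pm y)=\theta^*(x)\pm\theta^*(y)$ and $\theta^*(xy)=\theta^*(x)\theta^*(y)$ both lie in $Z(R_k)$. Likewise $\theta^*(1)=1$ and $\theta^*(0)=0$ lie in $Z(R_k)$.

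\emph{Closure under structure maps.} This is the only point where the simplicial definition (rather than the levelwise center) matters. Let $x\in Z(R)_n$ and $\phi:[m]\to[n]$, and consider $\phi^*_R(x)\in R_m$. For any $\psi:[k]\to[m]$, contravariant functoriality gives
\[
\psi^*_R\bigl(\phi^*_R(x)\bigr)=(\phi\circ\psi)^*_R(x).
\]
Since $\phi\circ\psi:[k]\to[n]$ and $x\in Z(R)_n$, the right-hand side lies in $Z(R_k)$. Hence $\phi^*_R(x)\in Z(R)_m$. So the structure maps of $R$ restrict to maps $Z(R)_n\to Z(R)_m$, making $Z(R)$ a simplicial subset.

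\emph{Conclusion.} The restricted maps are ring homomorphisms, and the simplicial identities are inherited from $R$. Therefore $Z(R)$ is an sring contained in $R$, i.e.\ a sub sring. I expect no real obstacle. The only subtlety is the closure step above, which is exactly why the definition quantifies over all $\theta$: the naive levelwise center $\{Z(R_n)\}$ need not be preserved by the face maps.
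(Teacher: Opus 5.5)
Your proof is correct and follows the paper's argument: you get closure under structure maps from $\psi^*_R\phi^*_R=(\phi\psi)^*_R$, and the levelwise subring property from writing $Z(R)_n$ as an intersection of preimages of the ordinary centers $Z(R_k)$. One small correction to your closing aside: face maps are split surjections (e.g.\ $d_is_i=Id$), so they always carry $Z(R_n)$ into $Z(R_{n-1})$; it is the degeneracies (e.g.\ $s_0^n:R_0\to R_n$) that can fail to preserve the levelwise center, and that is why the definition quantifies over all $\theta$.
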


\begin{proof}
    Let $n\geq 0$.
    Let $x\in Z(R)_n$ and $\sigma:[m] \to [n]$ in $\Delta$ be arbitrary.
    We show $\sigma^*_R(x)\in Z(R)_m.$
    For any $\theta: [k]\to [m]$,
    \begin{align*}
        \theta^*_R \big( \sigma^*_R(x)\big) = (\sigma\theta)^*_R(x)\in Z(R_k)
    \end{align*}
    which implies $\sigma^*_R(x)\in Z(R)_m$.
    Hence $Z(R)$ is a simplicial subset of $R.$
    Moreover,
    $Z(R)_n = \bigcap_{\theta:[k] \to [n]} (\theta^*_R)^{-1}\big( Z(R_k) \big)$ is a subring of $R_n$.
    So $Z(R)$ is a sub sring.
\end{proof}

So $Z(R)$ is the largest sub sring that is levelwise contained in $Z(R_n)$.
An sring $(R,\mu_{R/\Z\Delta^0}, \eta_{R/ \Z\Delta^0})$ is {\it commutative} if $\mu_{R/\Z\Delta^0}$ is commutative.
It is clear that an sring $R$ is commutative if and only if $Z(R)=R.$

\begin{definition}
\label{yydef1.7}
Let $R$ be an  sring.
The {\it tensor product} of  a right $R$-smodule $M$ and a left $R$-smodule $N$ is $M \sotimes_{R} N$
where $(M\sotimes_{R} N)_n:= M_n \otimes_{R_n} N_n$
with simplicial structure maps $\theta^*_{M \sotimes_{R} N}$ defined by
$$ M_n \otimes_{R_n} N_n \to M_m \otimes_{R_m} N_m  \qquad x\otimes y \mapsto  \theta^*_{M}(x) \otimes \theta^*_N(y)$$ 
for every $x\otimes y \in M_n \otimes_{R_n} N_n$ and every $\theta:[m]\to [n]$ in $\Delta.$
\end{definition}

\begin{remark}
\label{yyrem1.8}
Let $R$ be an sring, not necessarily commutative.
\begin{enumerate}
\item[(1)] 
When $R=\Z\Delta^0$,
the tensor product $\sotimes_{\Z\Delta^0}$ is exactly the tensor product $\sotimes_{\Z}$ in $\sAb.$
        So any simplicial abelian group is equivalently a $\Z\Delta^0$-smodule.
        When $R=\Bbbk\Delta^0$ for a commutative ring $\Bbbk,$
        we write $\sotimes_{\Bbbk}:=\sotimes_{\Bbbk\Delta^0}.$

        \item[(2)] As an example,
        $R\Delta^n : =R \sotimes_{\Z} \Z \Delta^n$ is naturally a left $R$-smodule, 
        where
        $\Z\Delta^n$ is the free simplicial abelian group generated by $\Delta^n$.

        \item[(3)] We note $R\sMod$ is abelian and exactness is defined levelwise \cite[§II.6]{Quillen1967}.
        The kernels and cokernels of an $R$-smodule morphism are computed levelwise.
        In particular, an $R$-smodule morphism is an epimorphism if and only if it is levelwise surjective. 

        \item[(4)] When $R$ is a commutative sring,
        $(R \sMod, \sotimes_{R}, R)$
        is a symmetric monoidal category \cite[p.~499]{SS2000}.

    \end{enumerate}
\end{remark}

\begin{definition}
\label{yydef1.9}
    Let $R$ be a commutative sring. 
    An sring $A$ is an {\it $R$-salgebra} if 
    there is an sring morphism $\eta_{A/R}: R \to A $ such that
    $\eta_{A/R}(R) \subseteq Z(A).$
    An $R$-salgebra morphism $f:A \to B$ is an sring morphism such that $f\circ \eta_{A/R} = \eta_{B/R}.$
\end{definition}

\begin{remark}
\label{yyrem1.10}
    Let $R$ be a commutative sring.
    \begin{enumerate}
        \item[(1)] Equivalently, an $R$-salgebra $A$ is a monoid object in the category of $R$-smodules \cite[p.~499]{SS2000}.
        In this way,
        the tensor product of $R$-salgebras is
        the tensor product of the underlying $R$-smodules. 
   
        \item[(2)] When $\Bbbk$ is a commutative ring, 
        a simplicial $\Bbbk$-algebra  refers
        to a simplicial object in the category of $\Bbbk$-algebras,
        which is equivalently
        a monoid object in the category of simplicial $\Bbbk$-modules \cite{SS2003}.
        A simplicial $\Bbbk$-algebra is equivalently a $\Bbbk\Delta^0$-salgebra, which will be called a $\kk$-salgebra throughout.

        \item[(3)]  If $A$ is an $R$-salgebra,
        a left $A$-smodule is equivalently an $R$-smodule  $M$ with $R$-smodule morphism
        $A \sotimes_R M \to M $
        satisfying the associativity and unitality of a left action \cite[§VII, Lemma 1.8]{EKMM1997}.
    \end{enumerate}
\end{remark}

\section{Separability of Simplicial Algebras}
\label{yysec2}
We define separable simplicial algebras over a commutative sring, characterize separable simplicial algebras over a constant commutative ring, and investigate the separability of the simplicial matrix rings as an example of simplicial algebras over a non-constant commutative sring.

\subsection{A few projectivity lemmas}
\label{yysec2.1}

\begin{lemma}
\label{yylem2.1}
    Let $R$ be an sring and let $I$ be a left sideal of $R$.
    Suppose $x\in R_0$.
    Define
    $(Ix)_n= I_ns_0^n(x)$.
    Then $Ix:=\{(Ix)_n\}_{n\geq 0}$ is a left sideal of $R$.
\end{lemma}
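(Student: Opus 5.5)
We need to show two things: each $(Ix)_n$ is a left ideal of $R_n$, and the family $\{(Ix)_n\}$ is closed under the simplicial structure maps of $R$, so that $Ix$ is a simplicial subset. The first is immediate. $I_n$ is a left ideal of $R_n$, and right multiplication by the fixed element $s_0^n(x)\in R_n$ is a left $R_n$-module map $R_n\to R_n$. Hence $I_n s_0^n(x)$, the image of $I_n$ under this map, is a left $R_n$-submodule, that is, a left ideal. We read $I_n s_0^n(x)$ as the set $\{a\,s_0^n(x)\mid a\in I_n\}$, which is already an additive subgroup closed under left multiplication. For $n=0$, $s_0^0$ is the identity and $(Ix)_0=I_0x$.

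The main step is compatibility with the structure maps. Fix $\theta:[m]\to[n]$ in $\Delta$. Since $[0]$ is terminal in $\Delta$, the composite $[m]\xrightarrow{\theta}[n]\to[0]$ is the unique map $[m]\to[0]$. By contravariant functoriality,
$$\theta^*_R\circ s_0^n = s_0^m \quad\text{as maps } R_0\to R_m .$$
Now take an element $a\,s_0^n(x)$ of $(Ix)_n$ with $a\in I_n$. Because $\theta^*_R$ is a ring homomorphism,
$$\theta^*_R\big(a\,s_0^n(x)\big)=\theta^*_R(a)\,\theta^*_R\big(s_0^n(x)\big)=\theta^*_R(a)\,s_0^m(x).$$
Since $I$ is a simplicial subset of $R$, we have $\theta^*_R(a)\in I_m$. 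So the image lies in $(Ix)_m$.

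Therefore $Ix$ is a simplicial subset of $R$ whose level $n$ part is a left ideal of $R_n$ for every $n$, which is exactly a left sideal in the sense of Definition \ref{yydef1.1}. The only point requiring care is the identity $\theta^*_R s_0^n=s_0^m$, which comes from $[0]$ being terminal. The rest is routine.
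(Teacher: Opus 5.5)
Your proof is correct and follows the paper's argument: both compute $\theta^*_R\big(I_n s_0^n(x)\big)=\theta^*_R(I_n)\,s_0^m(x)\subseteq I_m s_0^m(x)$ and note that $I_n s_0^n(x)$ is a left ideal of $R_n$. You also state explicitly the identity $\theta^*_R\circ s_0^n=s_0^m$, which holds because $[0]$ is terminal and which the paper uses without comment.
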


\begin{proof}
    Let $\theta:[m] \to [n]$ be in $\Delta.$
    Note
    \begin{align*}
        \theta^*_R\big( (Ix)_n \big) = \theta^*_R\big( I_ns_0^n(x) \big) = \theta^*_R(I_n)s_0^m(x) \subseteq I_ms_0^m(x) = (Ix)_m.
    \end{align*}
    This implies $Ix$ is closed under the simplicial structure maps of $R$ and hence a simplicial subset.
    Since $I_ns_0^n(x)$ is a left ideal of $R_n$,
    $Ix$ is a left sideal of $R$.
\end{proof}

\begin{lemma}
\label{yylem2.2}
Let $R$ be an sring and $e\in R_0$ be an idempotent. 
Then 
\begin{enumerate}
\item[(1)] $s_0^n(e)$ is an idempotent in $R_n$ for every $n\geq 0$.

        \item[(2)] View $R$ as an sideal of $R$.
        Then $Re$ is a left sideal of $R.$

        \item[(3)] As left $R$-smodules, $R = Re \oplus R(1_{R_0}-e).$
    \end{enumerate}
\end{lemma}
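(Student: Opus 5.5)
The whole argument rests on one fact: for every $\theta:[m]\to[n]$ in $\Delta$ we have $\theta^*_R\circ s_0^n = s_0^m$ as maps $R_0\to R_m$. This holds because $s_0^n$ is induced by the unique map $[n]\to[0]$, and the composite $[m]\xrightarrow{\theta}[n]\to[0]$ is again the unique map $[m]\to[0]$. Lemma \ref{yylem2.1} already uses this identity implicitly. Each $s_0^n:R_0\to R_n$ is also a ring homomorphism, since $R$ is a simplicial ring.

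For (1), I apply the ring homomorphism $s_0^n$ to $e^2=e$ and get $s_0^n(e)^2=s_0^n(e)$. The case $n=0$ is trivial, since $s_0^0=\mathrm{id}$.

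For (2), I view $R$ as a left sideal of itself: $R_n$ is a left ideal of $R_n$ and $R$ is trivially closed under the structure maps. Lemma \ref{yylem2.1} with $I=R$ and $x=e$ then shows that $Re=\{R_ns_0^n(e)\}_{n\ge 0}$ is a left sideal. The same lemma applied to $x=1_{R_0}-e$ shows that $R(1_{R_0}-e)$ is a left sideal. Here $1_{R_0}-e$ is also idempotent, and $s_0^n(1_{R_0}-e)=1_{R_n}-s_0^n(e)$ because ring maps preserve units.

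For (3), I work levelwise. Write $e_n=s_0^n(e)$, an idempotent of $R_n$ by (1). In each level, the ordinary Peirce decomposition gives $R_n=R_ne_n\oplus R_n(1-e_n)$ as left $R_n$-modules: every $r$ decomposes as $r=re_n+r(1-e_n)$, and if $re_n=r'(1-e_n)$, then multiplying on the right by $e_n$ gives $re_n=0$, so the sum is direct. Both summands are sub-smodules by (2), and a left $R$-smodule that is levelwise the internal direct sum of two sub-smodules is their direct sum in $R\sMod$. To justify this last step concretely, I note that the projection $r\mapsto re_n$ commutes with the structure maps, since $\theta^*_R(re_n)=\theta^*_R(r)\,e_m$ by the key identity. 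The projection is therefore a morphism of left $R$-smodules, and it splits the inclusion $Re\hookrightarrow R$.

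The only point needing care is compatibility with the simplicial structure, and the key identity $\theta^*_R s_0^n=s_0^m$ handles it; the rest is routine.
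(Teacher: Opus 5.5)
Your proposal is correct and follows essentially the same route as the paper: (1) from $s_0^n$ being a ring homomorphism, (2) from Lemma \ref{yylem2.1}, and (3) from the levelwise Peirce decomposition $R_n = R_ns_0^n(e)\oplus R_n(1_{R_n}-s_0^n(e))$. Your extra check that the projection $r\mapsto r\,s_0^n(e)$ commutes with the structure maps, via $\theta^*_R s_0^n = s_0^m$, spells out a step the paper leaves implicit.
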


\begin{proof}
    (1): Immediate since $s_0^n: R_0 \to R_n$ is a ring homomorphism.

    (2): By Lemma \ref{yylem2.1}.

    (3): It is clear that $1_{R_0}-e\in R_0$ is an idempotent.
    So by part (2), both $Re$ and $R(1_{R_0}-e)$ are left sideals and,
    in particular, left $R$-smodules.
    The rest follows from the decomposition $R_n = R_ns_0^n(e) \oplus R_n(1_{R_n}-s_0^n(e))$ for every $n\geq 0$.
\end{proof}

\begin{definition}
\label{yydef2.3}
Let $P$ be a left $R$-smodule.
We say $P$ is {\it projective} if the functor $\Hom_{R \sMod}(P,-): R \sMod \to \Ab$ is exact.
\end{definition}

The projectivity of the unit in a monoidal category is useful in the study of separability of monoids; see \cite[Proposition 4.5]{BT2015} and \cite[p.~98 and Theorem 4.14]{VOZ1998}.

\begin{lemma}
\label{yylem2.4}
    Let $R$ be an sring.
    Then $R$ is a projective left $R$-smodule.
\end{lemma}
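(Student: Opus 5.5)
The plan is to show that $\Hom_{R\sMod}(R,-)$ is naturally isomorphic to the functor $M\mapsto M_0$ from $R\sMod$ to $\Ab$. Since exactness in $R\sMod$ is levelwise (Remark \ref{yyrem1.8}(3)), evaluation at level $0$ is exact, and exactness of $\Hom_{R\sMod}(R,-)$ then follows at once.

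\textbf{Constructing the isomorphism.} For a left $R$-smodule $M$ define
\[
\Phi_M:\Hom_{R\sMod}(R,M)\to M_0,\qquad \Phi_M(f)=f_0(1_{R_0}).
\]
This is additive and natural in $M$. For the inverse, given $m\in M_0$, define $f^m:R\to M$ levelwise by
\[
f^m_n(r)=r\cdot s_0^n(m)\qquad (r\in R_n),
\]
where $s_0^n:M_0\to M_n$ is the structure map induced by $[n]\to[0]$.

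\textbf{Checking that $f^m$ is a morphism.} Left $R_n$-linearity of each $f^m_n$ is clear from associativity of the action. For compatibility with a structure map $\theta^*$, $\theta:[k]\to[n]$, the action map $R\sotimes_{\Z}M\to M$ is simplicial, so
\[
\theta^*_M\big(r\,s_0^n(m)\big)=\theta^*_R(r)\,\theta^*_M s_0^n(m).
\]
The composite $[k]\to[n]\to[0]$ is the unique map to $[0]$, hence $\theta^*_M s_0^n=s_0^k$. This gives $\theta^*_M f^m_n=f^m_k\theta^*_R$.

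\textbf{Checking the two maps are inverse.} First, $\Phi_M(f^m)=1\cdot m=m$. Conversely, let $f$ be a morphism and $r\in R_n$. Then
\[
f_n(r)=f_n\big(r\,s_0^n(1_{R_0})\big)=r\,f_n\big(s_0^n(1_{R_0})\big)=r\,s_0^n\big(f_0(1_{R_0})\big).
\]
The first equality uses that $s_0^n$ is a ring map, so $s_0^n(1_{R_0})=1_{R_n}$. The last uses that $f$ commutes with $s_0^n$. Hence $f=f^{\Phi_M(f)}$.

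Given this isomorphism, a short exact sequence $0\to M'\to M\to M''\to 0$ is exact at level $0$ and is sent to $0\to M'_0\to M_0\to M''_0\to 0$, which is exact. Naturality of $\Phi$ identifies this with the $\Hom$ sequence, so $R$ is projective.

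The only point requiring any care is the simplicial compatibility of $f^m$, and this reduces to the uniqueness of maps into $[0]$. I do not expect a genuine obstacle.
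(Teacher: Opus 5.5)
Your proof is correct and rests on the same observation as the paper's: a morphism $R\to M$ is the same thing as an element of $M_0$, via $f\mapsto f_0(1_{R_0})$ and $m\mapsto \big(r\mapsto r\,s_0^n(m)\big)$. The paper finishes by splitting an epimorphism $M\to R$ with a preimage of $1_{R_0}$, which tacitly uses the existence direction you verified and the splitting criterion for projectivity. You instead identify $\Hom_{R\sMod}(R,-)$ with the exact functor $M\mapsto M_0$, which checks Definition \ref{yydef2.3} directly.
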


\begin{proof}
    Any left $R$-smodule morphism $f: R\to M$ is uniquely determined by $f(1_{R_0})\in M_0$, 
    since $f(r)=rf(1_{R_n})=rs_0^n(f(1_{R_0}))$ for every $r\in R_n$.
    So any left $R$-smodule surjection $M\to R$ splits by looking at a preimage of $1_{R_0}$.
\end{proof}

The following is a generalization of \cite[Exercise 1.1.22]{Ford2017} in the simplicial setting.

\begin{proposition}
\label{yypro2.5}
    Let $R$ be an sring and $I$ be a left sideal of $R$.
    The following are equivalent.
    \begin{enumerate}
        \item[(1)] $R/I$ is a projective left $R$-smodule.

        \item[(2)] The sequence of left $R$-smodules $0\to I \to R \to R/I \to 0$ is split exact.

        \item[(3)] As a left $R$-smodule,
        $I$ is a direct summand of $R$.
        
        \item[(4)] There is an element $e\in R_0$ such that $1_{R_0}-e\in I_0$ and $Ie=0$.

        \item[(5)] There is an idempotent $e\in R_0$ such that $I=R(1_{R_0}-e)$.
    \end{enumerate}
\end{proposition}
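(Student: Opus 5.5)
I will prove the cycle of implications $(1)\Rightarrow(2)\Rightarrow(3)\Rightarrow(4)\Rightarrow(5)\Rightarrow(1)$, following the classical argument. The simplicial input is Lemma \ref{yylem2.4}: a left $R$-smodule morphism out of $R$ is determined by the image of $1_{R_0}$. Lemma \ref{yylem2.2} then lets idempotents in $R_0$ generate decompositions of $R$.

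For $(1)\Rightarrow(2)$, I apply projectivity of $R/I$ (Definition \ref{yydef2.3}) to the surjection $\pi: R\to R/I$, which is an epimorphism by Remark \ref{yyrem1.8}(3). Lifting $\mathrm{id}_{R/I}$ along $\pi$ gives a section, so the sequence splits. The implication $(2)\Rightarrow(3)$ is immediate.

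For $(3)\Rightarrow(4)$, write $R=I\oplus J$ as left $R$-smodules and let $p:R\to I$ be the projection, a left $R$-smodule morphism that restricts to the identity on $I$. Set $1_{R_0}-e := p(1_{R_0})\in I_0$, so $e\in J_0$. By Lemma \ref{yylem2.4}, $p(r)=r\,s_0^n(1_{R_0}-e)$ for all $r\in R_n$. For $x\in I_n$ we have $p(x)=x$, hence $x=x-x\,s_0^n(e)$, so $x\,s_0^n(e)=0$. Thus $Ie=0$ in the sense of Lemma \ref{yylem2.1}, with $(Ie)_n=I_ns_0^n(e)$.

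For $(4)\Rightarrow(5)$, set $f=1_{R_0}-e\in I_0$. Then $fe=0$ because $Ie=0$ at level $0$, so $e=e^2$ and $e$ is an idempotent. Level-wise, $s_0^n(f)\in I_n$ since $I$ is a simplicial subset, so $R_ns_0^n(f)\subseteq I_n$. Conversely, for $x\in I_n$ we get $x=x\,s_0^n(e)+x\,s_0^n(f)=x\,s_0^n(f)$. Hence $I=R(1_{R_0}-e)$ as left sideals.

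For $(5)\Rightarrow(1)$, Lemma \ref{yylem2.2}(3) gives $R=Re\oplus R(1_{R_0}-e)=Re\oplus I$, so $R/I\cong Re$ is a direct summand of $R$. Since $R$ is projective by Lemma \ref{yylem2.4}, and the functor $\Hom_{R\sMod}(Re,-)$ is a direct summand of the exact functor $\Hom_{R\sMod}(R,-)$, it is exact as well.

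The main point needing care is $(3)\Rightarrow(4)$: the idempotent must be produced in level $0$ and shown to work uniformly in all levels. This is exactly what Lemma \ref{yylem2.4} provides, since the projection is multiplication by $s_0^n$ of a single level-$0$ element. Everything else is levelwise classical.
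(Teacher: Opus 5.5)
Your proof is correct and follows the same cycle of implications as the paper. The only difference is in $(3)\Rightarrow(4)$: you get $Ie=0$ from the explicit form $p(r)=r\,s_0^n\bigl(p(1_{R_0})\bigr)$ of the projection, which is the observation in the proof of Lemma \ref{yylem2.4}. The paper instead argues directly that $Ie\subseteq I\cap N=0$. Both are equally short.
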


\begin{proof}
    (1) $\implies$ (2) $\implies $ (3): Immediate.

    (3) $\implies$ (4): Write $R = I \oplus N$ as left $R$-smodules.
    Then $1_{R_0}= (1_{R_0}-e)+ e$ for some $e\in N_0$.
    It follows that $Ie = I\big(1_{R_0}-(1_{R_0}-e) \big) \subseteq I + I(1_{R_0}-e)  \subseteq I$.
    Moreover, $Ie\subseteq IN\subseteq N$ since $N$ is a left $R$-smodule.
    Hence $Ie \subseteq I \cap N =0.$

    (4) $\implies$ (5): Since $Ie=0$, $(1_{R_0}-e)e=0$ implies $e$ is an idempotent.
    Hence $R=Re \oplus R(1_{R_0}-e)$ as left $R$-smodules by Lemma \ref{yylem2.2}(3).
    Since $I$ is a left sideal,
    $R(1_{R_0}-e) \subseteq I.$
    On the other hand,
    note that $$I=I(1_{R_0} -e + e)\subseteq 
    I(1_{R_0}-e)+Ie \subseteq I(1_{R_0}-e)\subseteq R(1_{R_0}-e).$$
    This completes the proof of $I = R(1_{R_0}-e).$

    (5) $\implies $ (1): Since $e\in R_0$ is an idempotent,
    $R= Re\oplus R(1_{R_0}-e)= Re\oplus I$ as left $R$-smodules by Lemma \ref{yylem2.2}(3).
    It follows that $R/I\cong Re$ is projective as a direct summand of the projective left $R$-smodule $R$ (Lemma \ref{yylem2.4}).
\end{proof}

From now on,
we always assume that $R$ is a commutative sring.

\begin{definition}
\label{yydef2.6}
Let $A$ be an $R$-salgebra.
The {\it enveloping algebra} of $A$ over $R$ is $A^e := A \sotimes_R A^{o}$.
Then $A$ is a left $A^e$-smodule under the action 
$$(a\otimes b)\cdot_A x = axb$$
for every $a,b,x\in A_n$ extended additively.
Therefore,
there is a surjective left $A^e$-smodule morphism
$$\mu_{A/R}:= A^e \to A \qquad u \mapsto u \cdot_A 1_{A_n}$$
for any $u\in (A^e)_n$.
Define $J_{A/R}:= \ker \mu_{A/R}$, which is a left sideal of $A^e.$
For every $k\geq 0,$
it is easy to verify that $(J_{A/R})_k= \sum_{a\in A_k} (A^e)_k(a \otimes 1_{A_k} - 1_{A_k} \otimes a)$.
\end{definition}

The following is a generalization of \cite[Proposition 4.1.2]{Ford2017}.
A more general version for braided monoidal categories appears in \cite[Proposition 4.1]{VOZ1998}.

\begin{proposition}
\label{yypro2.7}
    Let $A$ be an $R$-salgebra.
    The following are equivalent.
    \begin{enumerate}
        \item[(1)] $A$ is a projective left $A^e$-smodule.

        \item[(2)] The exact sequence of left $A^e$-smodules $0 \to J_{A/R} \to A^e \xrightarrow{\mu_{A/R}} A \to 0$ splits.

        \item[(3)] There is an element $e\in (A^e)_0$  such that $\mu_{A/R}(e)=1_{A_0}$ and $J_{A/R}e=0.$

        \item[(4)] There is an idempotent $e\in (A^e)_0$ such that $J_{A/R}= A^e(1_{(A^e)_0}-e).$
    \end{enumerate}
\end{proposition}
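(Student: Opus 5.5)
The plan is to reduce the whole statement to Proposition \ref{yypro2.5}, applied to the sring $A^e$ and its left sideal $I:=J_{A/R}$. First I will note that $\mu_{A/R}$ is a surjective left $A^e$-smodule morphism with kernel $J_{A/R}$. Hence it induces an isomorphism of left $A^e$-smodules $A^e/J_{A/R}\cong A$. Under this isomorphism the class of $1_{(A^e)_0}=1_{A_0}\otimes 1_{A_0}$ corresponds to $1_{A_0}$. Since $R$ is commutative, $A^e=A\sotimes_R A^{o}$ is an sring, and $J_{A/R}$ is a left sideal of it (Definition \ref{yydef2.6}). So Proposition \ref{yypro2.5} applies verbatim with $R$ replaced by $A^e$.

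Conditions (1) and (2) then match (1) and (2) of Proposition \ref{yypro2.5}. Projectivity is invariant under isomorphism, so $A$ is projective if and only if $A^e/J_{A/R}$ is. Likewise, the sequence $0\to J_{A/R}\to A^e\xrightarrow{\mu_{A/R}} A\to 0$ is isomorphic to $0\to J_{A/R}\to A^e\to A^e/J_{A/R}\to 0$. Condition (4) is literally condition (5) of Proposition \ref{yypro2.5}.

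The only step needing care is matching (3) with condition (4) of Proposition \ref{yypro2.5}, which reads: there is $e\in(A^e)_0$ with $1-e\in(J_{A/R})_0$ and $J_{A/R}e=0$. I will observe that $1-e\in (J_{A/R})_0=\ker(\mu_{A/R})_0$ if and only if $\mu_{A/R}(e)=\mu_{A/R}(1_{(A^e)_0})=1_{A_0}$. The remaining requirement $J_{A/R}e=0$ is the same in both statements. Here $J_{A/R}e$ is the left sideal of Lemma \ref{yylem2.1}, with levels $(J_{A/R})_n s_0^n(e)$.

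I expect no genuine obstacle. The only points to check are that the identification $A^e/J_{A/R}\cong A$ is an isomorphism of left $A^e$-smodules, and that the degree-zero unit is handled correctly. This is immediate because $\mu_{A/R}$ is defined levelwise and is $A^e$-linear by construction.
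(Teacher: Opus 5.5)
Your proposal is correct and follows the paper's own proof: apply Proposition \ref{yypro2.5} to the sring $A^e$ and its left sideal $J_{A/R}$, using $A\cong A^e/J_{A/R}$ and the fact that $\mu_{A/R}(e)=1_{A_0}$ if and only if $1_{(A^e)_0}-e\in (J_{A/R})_0$. Your checks that this isomorphism is $A^e$-linear and that the degree-zero unit is handled correctly only spell out what the paper treats as evident.
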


\begin{proof}
Apply Proposition \ref{yypro2.5} to $(A^e, J_{A/R} )$. Then the rest follows from the facts that $A\cong A^e/J_{A/R}$ and that $\mu(e)=1_{A_0}$ if and only if $1_{(A^e)_0}-e\in (J_{A/R})_0.$
\end{proof}

The following notion of separability of simplicial algebras agrees with the well-studied notion of separability of monoid objects in monoidal categories  \cite{ BT2015, VOZ1998, PareigisIV}.

\begin{definition}
\label{yydef2.8}
    Let $A$ be an $R$-salgebra.
    Then $A$ is {\it separable over $R$}, or a {\it separable $R$-salgebra},
    if any one of the equivalent conditions in Proposition \ref{yypro2.7} is satisfied.
    An element $e\in (A^e)_0$ such that $\mu_{A/R}(e)=1_{A_0}$ and
    $$(1_{A_n}\otimes a)s_0^n(e) = (a \otimes 1_{A_n})s_0^n(e)$$ for every $a\in A_n$
    is called a {\it separability idempotent}.
\end{definition}

\begin{remark}
\label{yyrem2.9}
Let $A$ be a separable $R$-salgebra.
    \begin{enumerate}
        \item[(1)] If $A$ is commutative,
        then the separability idempotent is unique.

        \item[(2)] If $R=\Bbbk\Delta^0$ for a commutative ring $\Bbbk,$
        we say $A$ is separable over $\Bbbk.$

        \item[(3)] Let $e\in (A^e)_0$ be a separability idempotent of $A.$
        Then $s_0^n(e)$ is a separability idempotent of the $R_n$-algebra $A_n$.
        So
        $A_n$ is separable over $R_n.$
    \end{enumerate}
\end{remark}

\subsection{Separable simplicial algebras over a constant simplicial ring}
\label{yysec2.2}
We characterize the separability of simplicial algebras over a constant commutative simplicial ring by a rigidity result.

\begin{lemma}
\label{yylem2.10}
        Let $\Bbbk$ be a commutative ring and $A$ be a separable $\Bbbk$-salgebra.
        Let $e\in (A^e)_0$ be a separability idempotent.
        Let $n\geq 0$.
        If $f: A_n \to A_n$ is an idempotent $\Bbbk$-algebra morphism such that $(f\otimes f^{o})(s_0^n(e) )=s_0^n(e)$,
        then $f=Id_{A_n}.$
    \end{lemma}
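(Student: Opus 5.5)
The plan is to use the classical fact that the separability idempotent acts as a two-sided ``averaging'' element, and to exploit that $f$ is a ring map fixing it. Write $\varepsilon := s_0^n(e) = \sum_i x_i \otimes y_i \in A_n \otimes_{\Bbbk} A_n^{o}$. By Remark \ref{yyrem2.9}(3), $\varepsilon$ is a separability idempotent of the ordinary $\Bbbk$-algebra $A_n$. So $\sum_i x_i y_i = 1$ and $\sum_i a x_i \otimes y_i = \sum_i x_i \otimes y_i a$ for all $a\in A_n$. The hypothesis $(f\otimes f^{o})(\varepsilon)=\varepsilon$ reads $\sum_i f(x_i)\otimes f(y_i) = \sum_i x_i\otimes y_i$.

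The key step is to produce a $\Bbbk$-linear map that recovers the identity from $\varepsilon$ but sees $f$. For $a\in A_n$, consider the $\Bbbk$-linear map $\Phi_a: A_n\otimes_\Bbbk A_n \to A_n$, $u\otimes v\mapsto u\,f(a)\,v$ (or a variant). Apply the equality $\sum f(x_i)\otimes f(y_i)=\sum x_i\otimes y_i$ under a suitable bilinear pairing involving $a$.

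I would compute $\sum_i x_i\, a\, y_i$ in two ways. First, using the separability relation, move $a$ across: $\sum_i a x_i\otimes y_i=\sum_i x_i\otimes y_i a$. Multiplying out gives $\sum_i x_i\, b\, y_i$-type identities; in particular, $\sum_i x_i y_i a = a\sum_i x_iy_i = a$.

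Next I would use idempotence of $f$. Since $f^2=f$, we have $A_n = \operatorname{im} f \oplus \ker f$ as $\Bbbk$-modules, and $\ker f$ is a two-sided ideal. The plan is to show $\ker f=0$. Take $a\in\ker f$. Then
\[
a = \sum_i a x_i y_i = \sum_i x_i y_i a = \sum_i f(x_i) f(y_i) a .
\]
This last equality is not immediate; it is legitimate because the identity $\sum f(x_i)\otimes f(y_i)=\sum x_i\otimes y_i$ holds in the tensor product. Applying the well-defined $\Bbbk$-linear map $u\otimes v\mapsto u\,a\,v$ to both sides gives
\[
\sum_i x_i a y_i = \sum_i f(x_i)\, a\, f(y_i).
\]
The separability relation, combined with the map $u\otimes v\mapsto uv$ after multiplying by $a$, yields
\[
\sum_i x_i a y_i = \sum_i x_i y_i a = a .
\]
To see this, apply $u\otimes v\mapsto u v$ to $\sum_i a x_i\otimes y_i = \sum_i x_i\otimes y_i a$. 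The left side gives $a$. To obtain $\sum_i x_i a y_i$ instead, apply $u\otimes v \mapsto u\,c\,v$ to the relation: this gives $\sum_i a x_i c y_i = \sum_i x_i c y_i a$, so $T(c):=\sum_i x_i c y_i$ is central-valued. It is not the identity in general; it is only the identity on central elements.

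So I would refine. Since $f(a)=0$, we get $\sum_i f(x_i)\,a\,f(y_i) = f\bigl(\sum_i x_i\,\tilde a\,y_i\bigr)$ only if $a$ lies in the image, which is false here. Instead, for arbitrary $a\in A_n$, apply $u\otimes v\mapsto u\,f(a)\,v$ to get
\[
T(f(a)) = \sum_i f(x_i) f(a) f(y_i) = f(T(a)).
\]
The cleanest route uses $u\otimes v\mapsto a u v$ versus $u\otimes v\mapsto f(a)\,u\,v$ together with the relation. Compute
\[
a = \sum_i a x_i y_i = \sum_i x_i y_i a = \sum_i f(x_i) f(y_i) a ,
\]
and also
\[
f(a) = \sum_i f(a)f(x_i)f(y_i) = f\Bigl(\sum_i a x_i y_i\Bigr).
\]
To compare $a$ with $f(a)$, use the relation $\sum_i a x_i\otimes y_i=\sum_i x_i\otimes y_i a$ and apply $f\otimes \mathrm{id}$, then multiply. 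This gives $\sum_i f(a)f(x_i) y_i = \sum_i f(x_i) y_i a$. Now use the hypothesis in the form $\sum_i f(x_i)\otimes y_i$, obtained by applying $\mathrm{id}\otimes f$ to $(f\otimes f)(\varepsilon)=\varepsilon$ and using $f^2=f$:
\[
\sum_i f(x_i)\otimes f(y_i) = \sum_i x_i \otimes f(y_i), \qquad \sum_i f(x_i)\otimes f(y_i) = \sum_i f(x_i)\otimes y_i .
\]
Hence $\sum_i f(x_i)\otimes y_i = \sum_i x_i\otimes y_i$. Multiplying out, $\sum_i f(x_i) y_i = 1$, and so $\sum_i f(a)f(x_i)y_i = f(a)$ and $\sum_i f(x_i)y_i a = a$. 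Therefore $f(a)=a$.

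The main obstacle I expect is exactly this step: producing the one-sided fixed identity $\sum_i f(x_i)\otimes y_i=\varepsilon$ from the two-sided hypothesis via idempotence, and ensuring each ``multiply out'' is a well-defined $\Bbbk$-linear map on $A_n\otimes_\Bbbk A_n$. The maps $f\otimes\mathrm{id}$ and $u\otimes v\mapsto uv$ are both well defined, so these steps are legitimate.
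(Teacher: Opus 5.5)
Your final argument is correct and is essentially the paper's proof. Idempotence gives $(f\otimes \mathrm{id})(\varepsilon)=\varepsilon$; applying $f\otimes\mathrm{id}$ to $(a\otimes 1)\varepsilon=(1\otimes a)\varepsilon$ and multiplying out then yields $f(a)=a$. You should delete the exploratory detours (the $\ker f$ attempt and the retracted claim $\sum_i x_i a y_i=a$). Also, the identity $\sum_i f(x_i)\otimes f(y_i)=\sum_i f(x_i)\otimes y_i$ that you actually use comes from applying $f\otimes\mathrm{id}$, not $\mathrm{id}\otimes f$.
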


    \begin{proof}
         Since $f\otimes f^{o}$ fixes $s_0^n(e)$ and $f$ is idempotent, 
        \begin{align}
        \label{E2.10.1}
        \tag{E2.10.1}
            (f\otimes Id_{A^{o}_n}) (s_0^n(e)) &= (f\otimes Id_{A^{o}_n}) (f\otimes f^{o})(s_0^n(e)) = (f\otimes f^{o})(s_0^n(e)) = s_0^n(e).
        \end{align}
        For any $a\in A_n$, since $e$ is a separability idempotent,
        \begin{align}
        \label{E2.10.2}
        \tag{E2.10.2}
           (a\otimes 1_{A_n})s_0^n(e) = (1_{A_n} \otimes a) s_0^n(e).
        \end{align}
        Applying the $\Bbbk$-algebra morphism $f\otimes Id_{A^{o}_n}$ to \eqref{E2.10.2}, 
        \begin{align}
        \label{E2.10.3}
        \tag{E2.10.3}
          (f(a) \otimes 1_{A_n})s_0^n(e)
          &=(1_{A_n}\otimes a) s_0^n(e)=(a\otimes 1_{A_n})s_0^n(e).
        \end{align}
        Then apply the left $A^e$-smodule morphism 
        $\mu_{A/\Bbbk \Delta^0}$ to \eqref{E2.10.3}, 
        \begin{align*}
            f(a) = f(a)\mu_{A/\Bbbk \Delta^0}(s_0^n(e)) = a\mu_{A/\Bbbk \Delta^0}(s_0^n(e))=a
        \end{align*}
        since $\mu_{A/\Bbbk \Delta^0}(e)=1_{A_0}.$
        Therefore,
        $f=Id_{A_n}.$
    \end{proof}

\begin{theorem}
\label{yythm2.11}
    Let $\Bbbk$ be a commutative ring and $A$ be a $\Bbbk$-salgebra.
    Then $A$ is separable over $\Bbbk$ if and only if $A\cong A_0\Delta^0 :=\Bbbk\Delta^0 \sotimes_{\Bbbk} A_0$ (the constant sring determined by $A_0$) and $A_0$ is separable over $\Bbbk$.
\end{theorem}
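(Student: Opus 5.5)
The converse direction is short and I would do it first. If $A\cong A_0\Delta^0$ with $A_0$ separable over $\Bbbk$, take a separability idempotent $e\in A_0\otimes_\Bbbk A_0^{o}$. Since $A^e=(A_0\otimes A_0^{o})\Delta^0$ is constant, every $s_0^n$ is the identity. Hence $\mu(e)=1$ and $(1\otimes a)e=(a\otimes 1)e$ hold at every level, so $e$ is a separability idempotent for $A$ in the sense of Definition \ref{yydef2.8}. By Proposition \ref{yypro2.7}(3) this gives separability: the annihilation $J_{A/\Bbbk}e=0$ follows levelwise from the classical fact that $J$ is generated by the elements $a\otimes 1-1\otimes a$.

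For the forward direction, assume $A$ is separable with separability idempotent $e\in(A^e)_0=A_0\otimes_\Bbbk A_0^{o}$. Remark \ref{yyrem2.9}(3) already gives that $A_0$ is separable over $\Bbbk$, so it remains to show $A\cong A_0\Delta^0$. The strategy is to prove that every degeneracy $s_0^n:A_0\to A_n$ is an isomorphism, using Lemma \ref{yylem2.10}. Fix $n$ and let $\tau:[n]\to[0]$ and $\iota:[0]\to[n]$ be any pair of maps with $\tau\iota=\mathrm{id}$. Set $f=\tau^*\iota^*=s_0^n\circ\iota^*:A_n\to A_n$. Because the base is the constant ring $\Bbbk$, the structure maps are $\Bbbk$-algebra morphisms, so $f$ is a $\Bbbk$-algebra morphism. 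It is idempotent because $\iota^*\tau^*=(\tau\iota)^*=\mathrm{id}$.

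Next I would check that $f\otimes f^{o}$ fixes $s_0^n(e)$. The structure maps of $A^e$ are tensor products of the structure maps of $A$, so $f\otimes f^{o}$ is the structure map $(\iota\tau)^*$ of $A^e$. Therefore
\[
(f\otimes f^{o})(s_0^n(e))=(\iota\tau)^*\tau^*(e)=(\tau\iota\tau)^*(e)=\tau^*(e)=s_0^n(e).
\]
Lemma \ref{yylem2.10} then gives $f=\mathrm{Id}_{A_n}$. Consequently $s_0^n$ is surjective, and it is injective because $\iota^*s_0^n=\mathrm{id}$; so $s_0^n$ is bijective with inverse $\iota^*$.

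To finish, I would show that all structure maps are compatible with these identifications. For any $\theta:[m]\to[n]$ we have $\theta^*s_0^n=s_0^m$, since $[m]\to[n]\to[0]$ is the unique map to $[0]$. Thus $\theta^*=s_0^m(s_0^n)^{-1}$, and the maps $s_0^n$ assemble into an isomorphism of $\Bbbk$-salgebras $A_0\Delta^0\to A$.

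The main obstacle is bookkeeping rather than substance: one has to verify that $f\otimes f^{o}$ really is the simplicial structure map of $A^e$, so that the fixed-point hypothesis of Lemma \ref{yylem2.10} comes for free. Constancy of $\Bbbk$ is what makes $f$ a $\Bbbk$-algebra map and what identifies $(A^e)_n$ with $A_n\otimes_\Bbbk A_n^{o}$.
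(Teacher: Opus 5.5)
Your proposal is correct and follows essentially the paper's route. The paper takes $f=s_0^n d_0^n$ (your $\iota$ chosen as the vertex $n$), notes that $f\otimes f^{o}$ fixes $s_0^n(e)=(s_0^n\otimes s_0^n)(e)$ because $d_0^n s_0^n=Id_{A_0}$, applies Lemma~\ref{yylem2.10} to get that each $s_0^n$ is an isomorphism, and handles the converse with the same constant-idempotent observation; your extra bookkeeping (viewing $f\otimes f^{o}$ as a structure map of $A^e$, and checking compatibility of the $s_0^n$ with all $\theta^*$) makes explicit what the paper leaves implicit.
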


\begin{proof}
    ($\implies$) Suppose $A$ is separable over $\Bbbk$ and let $e\in (A^e)_0$ be a separability idempotent.
    From the canonical $\Bbbk$-salgebra inclusion $A_0\Delta^0 \hookrightarrow A$,
    for every $n\geq 0,$
    there is a $\Bbbk$-algebra morphism 
    $s_0^n: A_0 \to A_n.$
    Note that $d_0^ns_0^n = Id_{A_0}$ which implies 
    $s_0^nd_0^n: A_n \to A_n$ is idempotent.
    Moreover,    \begin{align*}
        \big( (s_0^nd_0^n) \otimes (s_0^nd_0^n) \big) s_0^n(e) = \big( (s_0^nd_0^n) \otimes (s_0^nd_0^n) \big) (s_0^n \otimes s_0^n)(e) = s_0^n(e).
    \end{align*}
    By Lemma \ref{yylem2.10}, $s_0^nd_0^n = Id_{A_n}$ and hence $s_0^n$ is an isomorphism.
    Thus, $A_0\Delta^0\hookrightarrow A$ is an isomorphism of $\Bbbk$-salgebras.
    Finally, 
    we note the separability idempotent $e$ makes $A_0$ separable over $\Bbbk$ as in Remark \ref{yyrem2.9}(3).

    ($\impliedby$) Suppose $A_0$ is separable over $\Bbbk$.
    Any separability idempotent $e \in (A_0)^e$ clearly makes the constant $\Bbbk$-salgebra $A_0\Delta^0$ separable.
\end{proof}

\begin{corollary}
\label{yycor2.12}
Let $\Bbbk$ be a field and let $A$ be a separable simplicial algebra over $\Bbbk$. Then $A\sMod$ has infinite
global dimension.
\end{corollary}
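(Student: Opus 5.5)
By Theorem \ref{yythm2.11}, we may replace $A$ by the constant salgebra $A_0\Delta^0$ with $A_0$ an ordinary separable $\Bbbk$-algebra. Since $\Bbbk$ is a field, $A_0$ is a nonzero (we tacitly assume $A\neq 0$) finite-dimensional semisimple algebra. The plan is to exhibit, for every $n\geq 1$, a pair of left $A$-smodules $M,N$ with $\mathrm{Ext}^n_{A\sMod}(M,N)\neq 0$. The Ext groups are computed in the abelian category $A\sMod$, for instance via Yoneda extensions or via projective resolutions.

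The key observation is that, since $A$ is constant, a left $A$-smodule is exactly a simplicial object in the category $A_0\Mod$. Thus $A\sMod$ is the functor category $\mathrm{Fun}(\Delta^{op}, A_0\Mod)$. Because $A_0$ is semisimple, we may choose a simple $A_0$-module $S$ (for example a minimal left ideal). Then $\Hom_{A_0}(S,-)$ is exact, and the functor $V\mapsto S\otimes_{\Bbbk_S} V$ gives a fully faithful exact embedding. Here $\Bbbk_S:=\mathrm{End}_{A_0}(S)^{o}$ is a division ring, and the functor sends simplicial $\Bbbk_S$-vector spaces onto the Serre subcategory of smodules that are levelwise sums of copies of $S$. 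More precisely, the summand $A_0$-block isomorphic to $\mathrm{End}_{\Bbbk_S}(S)$ makes $A\sMod$ contain, as a direct factor, the category $\mathrm{Fun}(\Delta^{op},\Bbbk_S\Mod)$, via Morita equivalence applied levelwise. This reduces the problem to showing that simplicial vector spaces over a division ring $D$ have infinite global dimension.

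For simplicial $D$-modules, projectives are summands of sums of the representables $D\Delta^n$. This follows from the Yoneda lemma, exactly as in Lemma \ref{yylem2.4}, which gives $\Hom(D\Delta^n,M)=M_n$. I would then compute Ext between explicit objects. A natural choice is the constant object $D\Delta^0$ and the skyscraper-type objects, or better, the normalized-chain description. By the Dold--Kan correspondence (valid over any ring), $\mathrm{Fun}(\Delta^{op},D\Mod)$ is equivalent to the category of nonnegatively graded chain complexes of $D$-modules. Chain complexes in degrees $\geq 0$ have infinite global dimension: the complex $E_n$ concentrated in degree $n$ satisfies $\mathrm{Ext}^k(E_0,E_k)\neq 0$ for every $k$. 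Indeed, $E_0$ has the projective resolution
\[
\cdots\to D^{[1,2]}\to D^{[0,1]}\to E_0\to 0
\]
by the disk complexes $D^{[j,j+1]}$ (a copy of $D$ in degrees $j+1\to j$ with identity differential). Applying $\Hom(-,E_k)$ to this resolution gives $D$ in the correct degree with zero differentials.

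The step I expect to be the main obstacle is making this Ext computation airtight. One must check that the disk complexes are projective in the category of chain complexes, that the resolution is exact, and that the induced maps on $\Hom(-,E_k)$ vanish. Alternatively, one can transport everything back across the Dold--Kan equivalence and the levelwise Morita embedding while confirming that the factor is really a direct factor, so that Ext is preserved. If the Morita step causes trouble, I would instead work directly with $A_0$-coefficients: Dold--Kan over $A_0$ identifies $A\sMod$ with chain complexes of $A_0$-modules in degrees $\geq 0$, and the same disk resolution with $D$ replaced by $S$ gives $\mathrm{Ext}^k(S[0],S[k])\neq 0$.
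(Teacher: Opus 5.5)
There is a genuine error in the step you flagged as the main obstacle. The reduction steps themselves are fine: Theorem~\ref{yythm2.11}, the levelwise Morita reduction, and Dold--Kan.

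The problem is this. Dold--Kan produces homologically graded complexes, with differential $N_n\to N_{n-1}$. In $\mathrm{Ch}_{\geq 0}(D\Mod)$ the object $E_0$ is projective, because $\Hom(E_0,C)\cong C_0$. This is just your own Yoneda remark $\Hom(D\Delta^n,M)=M_n$, since $E_0$ is the normalization of $D\Delta^0$. Hence $\mathrm{Ext}^k(E_0,-)=0$ for all $k\geq 1$, and the claim $\mathrm{Ext}^k(E_0,E_k)\neq 0$ is false. Your fallback $\mathrm{Ext}^k(S[0],S[k])\neq 0$ fails for the same reason, since $S$ is a projective $A_0$-module.

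Concretely, $\Hom(D^{[j,j+1]},C)\cong C_{j+1}$. So every chain map $D^{[0,1]}\to E_0$ is zero, and your augmentation (identity in degree $0$) is not a chain map. The sequence you wrote is the projective resolution one would use for cochain complexes, with differential raising degree. It is not valid in the category Dold--Kan gives you.

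The repair is to reverse the variance. Since $\Hom(C,D^{[j,j+1]})\cong\Hom_D(C_j,D)$ and every $D$-module is injective, the disks are injective in $\mathrm{Ch}_{\geq 0}(D\Mod)$. Then $0\to E_0\to D^{[0,1]}\to D^{[1,2]}\to\cdots$ is an injective resolution whose $k$-th cosyzygy is $E_k$. Applying $\Hom(E_k,-)$ gives a complex with $\mathrm{End}_D(D)$ in degree $k$ and zero elsewhere, so $\mathrm{Ext}^k(E_k,E_0)\neq 0$ for every $k$. Equivalently, $0\to E_0\to D^{[0,1]}\to\cdots\to D^{[k-1,k]}\to E_k\to 0$ is a projective resolution showing that $E_k$ has projective dimension exactly $k$.

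With a simple $A_0$-module $M$ in place of $D$ (injective because $A_0$ is semisimple), this is exactly the paper's proof. The paper applies Dold--Kan to get $\mathrm{Ch}_{\geq 0}(A_0\Mod)$ and then uses the injective resolution $I^n=\Sigma^n(\Bbbk\to\Bbbk)\otimes M$ of $M$ placed in degree $0$. So your Morita reduction to a division ring is harmless but unnecessary.
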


\begin{proof}
By Theorem \ref{yythm2.11},
$A\cong\Bbbk\Delta^0 \sotimes_{\Bbbk} A_0$ where $A_0$ is a separable algebar
over $\Bbbk$.
By Dold-Kan Correspondence \cite[Theorem 3.1]{HSZZ26},
$A\sMod\cong {\rm{Ch}}_{\geq 0}(A_0\Mod)$. So it suffices to show that
${\rm{Ch}}_{\geq 0}(A_0\Mod)$ has infinite global dimension. Let $M$ be 
a simple left $A_0$-module. Consider $M$ as an object in ${\rm{Ch}}_{\geq 0}(A_0\Mod)$, it 
has a minimal injective resolution
$$0\to M \to I^{0}
\to I^{1}\to \cdots \to I_n\to \cdots $$
where $I^n=\Sigma^n(\Bbbk\to\Bbbk)\otimes M$, for all $n\geq 0$, 
are injective objects in ${\rm{Ch}}_{\geq 0}(A_0\Mod)$ by \cite[Lemma A.3(5)]{HSZZ26}.
Hence $M$ has infinite injective dimension.
Therefore $A\sMod$ has infinite global
dimension.
\end{proof}

\subsection{Separable simplicial algebras over a non-constant simplicial ring}
\label{yysec2.3}

Separability of a simplicial algebra depends on the base sring.
When the base sring is not constant, 
the rigidity in Theorem \ref{yythm2.11} fails.

\begin{definition}
\label{yydef2.13}
Let $\Bbbk$ be a commutative ring and $A: \Delta^{op}\to \kAlg$ be a $\Bbbk$-salgebra.
Let $k\geq 1.$
The {\it simplicial matrix ring} $M_k(A)$ is 
\begin{align*}
    M_k(A):=\Delta^{op} \xrightarrow[]{A} \kAlg \xrightarrow[]{M_k(-)} \kAlg,
\end{align*}
where 
$M_k(-)$ is the functor of taking the $k\times k$ matrix ring of an ordinary $\Bbbk$-algebra.
In particular,
$M_k(A)_m := M_k(A_m)$ and
for every $\theta:[p]\to [m]$ in $\Delta,$
the induced simplicial structure map $\theta^*_{M_k(A)}: M_k(A)_m \to M_k(A)_p$ sends $(a_{ij})$ to $\big(\theta^*_A(a_{ij}) \big)$.
\end{definition}

For every $k\geq 1$, it is clear from Definition \ref{yydef2.13} that $M_k(A)$ is a $\Bbbk$-salgebra.
Furthermore, there is a canonical sring injection
\begin{align*}
    \delta: A \to M_k(A) \qquad a \mapsto aI_k.
\end{align*}
Note that $\delta(A)\subseteq Z\big(M_k(A) \big)$ if and only if
$A$ is commutative.
Therefore,
when $A$ is commutative, $M_k(A)$ is naturally an $A$-salgebra.
This leads to two questions:
\begin{enumerate}
    \item[(a)] View $A$ as a $\Bbbk$-salgebra.
    When is $M_k(A)$ separable over $\Bbbk$?
    
    \item[(b)] Suppose $A$ is a commutative sring.
    When is $M_k(A)$ separable over $A$?
\end{enumerate}

The following lemma is well-known.

\begin{lemma}
\label{yylem2.14}
Let $\Bbbk$ be a commutative ring and $B$ be a $\Bbbk$-algebra.
For every $k\geq 1$, $M_k(B)$ is separable over $\Bbbk$ if and only if $B$ is separable over $\Bbbk.$
\end{lemma}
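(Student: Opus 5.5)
The plan is to prove both directions with explicit separability idempotents, using the classical characterization: an algebra $C$ over $\Bbbk$ is separable if and only if there is an element $e=\sum_i x_i\otimes y_i\in C\otimes_\Bbbk C^{o}$ with $\sum_i x_iy_i=1$ and $\sum_i ax_i\otimes y_i=\sum_i x_i\otimes y_ia$ for all $a\in C$. (This is the ungraded case of Proposition \ref{yypro2.7} and Definition \ref{yydef2.8}, applied to constant srings.) We write $E_{ij}$ for the matrix units of $M_k(\Bbbk)$ and identify $M_k(B)\cong M_k(\Bbbk)\otimes_\Bbbk B$.

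($\Leftarrow$) First, $M_k(\Bbbk)$ is separable over $\Bbbk$, with idempotent
$$f=\sum_{i}E_{i1}\otimes E_{1i}.$$
Indeed, $\sum_i E_{i1}E_{1i}=\sum_iE_{ii}=1$. For the centrality condition, a check on $a=E_{pq}$ gives $E_{pq}f=E_{p1}\otimes E_{1q}=fE_{pq}$.

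Next, if $e=\sum_j x_j\otimes y_j$ is a separability idempotent for $B$, then the product
$$\sum_{i,j}(E_{i1}x_j)\otimes(E_{1i}y_j)\in M_k(B)\otimes_\Bbbk M_k(B)^{o}$$
is one for $M_k(B)=M_k(\Bbbk)\otimes_\Bbbk B$. The multiplication condition holds because the two factors commute. The centrality condition only needs to be checked on the generators $E_{pq}$ and $b\in B$, since the set of elements satisfying it is closed under products. On $E_{pq}$ the $B$-factor is untouched, and on $b$ the matrix factor is untouched.

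($\Rightarrow$) Suppose $M_k(B)$ is separable with idempotent $e=\sum_t X_t\otimes Y_t$. Define
$$e'=\sum_{t,l}(X_t)_{1l}\otimes (Y_t)_{l1}\in B\otimes_\Bbbk B^{o}.$$
This is the image of $e$ under the $\Bbbk$-linear map $X\otimes Y\mapsto \sum_l X_{1l}\otimes Y_{l1}$, which amounts to compressing by $E_{11}$ and summing over the middle index.

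Multiplying out gives $\sum_{t,l}(X_t)_{1l}(Y_t)_{l1}=(\sum_t X_tY_t)_{11}=1$.

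For the centrality of $e'$, take $b\in B$ and use the identity $(bI_k)e=e(bI_k)$. Reading off the $(1,l)$ entry in the left factor and the $(l,1)$ entry in the right factor, and summing over $l$, yields $be'=e'b$. The only thing to verify here is that this entry extraction is compatible with the bimodule actions. Left multiplication by $bI_k$ multiplies every entry on the left by $b$, and right multiplication on the second factor multiplies every entry on the right by $b$, so this is just bookkeeping.

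The main obstacle is this last index bookkeeping. I expect it to be routine, since $bI_k$ commutes with all matrix units. If it proves messy, the fallback is the conceptual argument: $B\cong E_{11}M_k(B)E_{11}$ is a corner ring that is Morita equivalent to $M_k(B)$, and separability (that is, projectivity of the algebra as a bimodule) is Morita invariant.
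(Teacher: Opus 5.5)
Your proof is correct in both directions. The paper gives no proof of this lemma (it is quoted as well known), so there is no argument of the paper's to match.

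For $(\Leftarrow)$, your idempotent $\sum_i E_{i1}\otimes E_{1i}$ is the $j=1$ case of the element the paper uses in Proposition~\ref{yypro2.16}. Tensoring it with a separability idempotent of $B$ is the level-zero content of the splitting $h_A\otimes h_B$ in Lemma~\ref{yylem4.2}. The reduction to the generators $E_{pq}$ and $bI_k$ is valid because $\{a : ae=ea\}$ is a $\Bbbk$-subalgebra, closed under sums as well as products.

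For $(\Rightarrow)$, the paper has no counterpart to your compression argument. Your map $\Phi\colon X\otimes Y\mapsto\sum_l X_{1l}\otimes Y_{l1}$ is well defined because it is $\Bbbk$-bilinear. It satisfies $\Phi\bigl((bI_k)w\bigr)=b\,\Phi(w)$ and $\Phi\bigl(w(bI_k)\bigr)=\Phi(w)\,b$. It also satisfies $\mu_B(\Phi(w))=\mu_{M_k(B)}(w)_{11}$. Together these give $\mu_B(e')=1$ and $(b\otimes 1-1\otimes b)e'=0$, hence $J_Be'=0$, so $e'$ is a separability idempotent of $B$. Notably, this direction uses only centrality of $e$ against the scalar matrices $bI_k$, never against the matrix units.

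Compared with your Morita fallback, the computation is entirely elementary, but it is specific to matrix rings. The fallback is more conceptual and applies to any full corner. It rests on two facts: a $\Bbbk$-linear Morita equivalence $B\sim M_k(B)$ induces one between $B^e$ and $M_k(B)^e$ sending $B$ to $M_k(B)$, and equivalences preserve projectivity. In the simplicial setting this is the role played by Theorem~\ref{yythm5.8}.
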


Question (a) can be answered by a simplicial version of Lemma \ref{yylem2.14}.

\begin{proposition}
\label{yypro2.15}
    Let $\Bbbk$ be a commutative ring and $A$ be a $\Bbbk$-salgebra.
    For every $k\geq 1$,
    the following are equivalent.
    \begin{enumerate}
        \item[(1)] $M_k(A)$ is separable over $\Bbbk$.

        \item[(2)] $M_k(A) \cong M_k(A_0)\Delta^0$ and $M_k(A_0)$ is separable over $\Bbbk$.

        \item[(3)]
        $A\cong A_0\Delta^0$ and $A_0$ is separable over $\Bbbk$.

        \item[(4)] $A$ is separable over $\Bbbk$.
    \end{enumerate}
\end{proposition}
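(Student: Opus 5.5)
The plan is to go around the cycle (1)$\Rightarrow$(2)$\Rightarrow$(3)$\Rightarrow$(4)$\Rightarrow$(1). Theorem \ref{yythm2.11} will do the simplicial work, and Lemma \ref{yylem2.14} will handle the ordinary algebras.

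(1)$\Rightarrow$(2): I apply Theorem \ref{yythm2.11} to the $\Bbbk$-salgebra $M_k(A)$. It gives $M_k(A)\cong M_k(A)_0\Delta^0$ with $M_k(A)_0$ separable over $\Bbbk$. Since $M_k(A)_0=M_k(A_0)$ by Definition \ref{yydef2.13}, this is exactly (2).

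(2)$\Rightarrow$(3): First, Lemma \ref{yylem2.14} applied to $B=A_0$ shows that $A_0$ is separable over $\Bbbk$. It remains to see that $A$ is constant. The isomorphism $M_k(A)\cong M_k(A_0)\Delta^0$ in Theorem \ref{yythm2.11} is realized by the canonical inclusion, so every degeneracy $s_0^n:M_k(A_0)\to M_k(A_n)$ is bijective. I will use this canonical form of the isomorphism rather than an abstract one; if necessary I will re-derive it by running (2)$\Rightarrow$(1) via Theorem \ref{yythm2.11} and then re-invoking that theorem.

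The structure maps of $M_k(A)$ act entrywise by Definition \ref{yydef2.13}. Hence $s_0^n$ on $M_k(A_0)$ is the map $M_k(s_0^n)$, and it is bijective if and only if $s_0^n:A_0\to A_n$ is bijective. This can be seen, for instance, by restricting to the $(1,1)$ entry, or to scalar matrices using the injection $\delta$. It follows that the canonical inclusion $A_0\Delta^0\hookrightarrow A$ is an isomorphism, which gives (3).

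(3)$\Rightarrow$(4) is the ($\Leftarrow$) direction of Theorem \ref{yythm2.11}.

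(4)$\Rightarrow$(1): By Theorem \ref{yythm2.11}, $A\cong A_0\Delta^0$ with $A_0$ separable. Then $M_k(A)\cong M_k(A_0)\Delta^0$, because $M_k(-)$ is applied levelwise and the structure maps stay identities. Lemma \ref{yylem2.14} makes $M_k(A_0)$ separable, and the ($\Leftarrow$) direction of Theorem \ref{yythm2.11} then makes $M_k(A)$ separable.

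I expect the main obstacle to be in (2)$\Rightarrow$(3): one has to pass from the isomorphism $M_k(A)\cong M_k(A_0)\Delta^0$ to an honest isomorphism $A\cong A_0\Delta^0$. I handle this by using the canonical form of the isomorphism, with degeneracies bijective, instead of an arbitrary isomorphism. Then entrywise bijectivity of $M_k(s_0^n)$ descends to bijectivity of $s_0^n$ on $A$.
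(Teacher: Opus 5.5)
Your proposal is correct and follows essentially the same route as the paper: (1)$\iff$(2) and (3)$\iff$(4) come from Theorem \ref{yythm2.11}, and (2)$\iff$(3) from Lemma \ref{yylem2.14} together with the fact that $M_k(A)$ is constant if and only if $A$ is, which the paper calls clear and you justify entrywise via $M_k(s_0^n)$. Your worry about an abstract versus a canonical isomorphism in (2) needs no detour: if $\phi\colon M_k(A)\to M_k(A_0)\Delta^0$ is any isomorphism of simplicial objects, naturality gives $\phi_n\circ s_0^n=\phi_0$, so $s_0^n=\phi_n^{-1}\circ\phi_0$ is already bijective.
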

\begin{proof}
    (1) $\iff$ (2): By Theorem \ref{yythm2.11}.

    (2) $\iff$ (3): It is clear that $M_k(A)$ is a constant sring if and only if $A$ is a constant sring.
    Then
    by Lemma \ref{yylem2.14}, $M_k(A_0)$ is separable over $\Bbbk$ if and only if $A_0$ is separable over $\Bbbk$.

    (3) $\iff$ (4): By Theorem \ref{yythm2.11}.
\end{proof}

If $\Bbbk$ is a commutative ring, then $M_k(\Bbbk)$ is separable over $\Bbbk$ \cite[Example 4.2.1]{Ford2017}.
Question (b) can be answered by a simplicial generalization of this fact.

\begin{proposition}
\label{yypro2.16}
    Suppose $A$ is a commutative sring.
    For every $k\geq 1$,
    $M_k(A)$ is separable over $A$.
\end{proposition}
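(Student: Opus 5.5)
The plan is to write down an explicit separability idempotent in degree $0$ and check the defining identity of Definition \ref{yydef2.8} levelwise. Write $E_{ij}\in M_k(A_0)$ for the matrix units, and set
$$e:=\sum_{i=1}^k E_{i1}\otimes E_{1i}\in \big(M_k(A)^e\big)_0 = M_k(A_0)\otimes_{A_0} M_k(A_0)^{o}.$$
This is the classical separability idempotent of $M_k(\Bbbk)$ over $\Bbbk$ (\cite[Example 4.2.1]{Ford2017}), with the ring $A_0$ playing the role of $\Bbbk$. Since $A$ is commutative, $M_k(A)$ is an $A$-salgebra and $M_k(A)^e=M_k(A)\sotimes_A M_k(A)^{o}$ is formed levelwise over $A_n$. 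By Remark \ref{yyrem2.9}(3) and Definition \ref{yydef2.8}, it then suffices to verify two things: $\mu(e)=1$, and $(1\otimes a)s_0^n(e)=(a\otimes 1)s_0^n(e)$ for all $a\in M_k(A_n)$ and all $n\ge 0$.

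The first step is to compute $s_0^n(e)$. The structure maps of $M_k(A)$ act entrywise (Definition \ref{yydef2.13}), and ring maps send $0\mapsto 0$ and $1\mapsto 1$. Hence $s_0^n(E_{ij})$ is the matrix unit $E_{ij}\in M_k(A_n)$, and so
$$s_0^n(e)=\sum_i E_{i1}\otimes E_{1i}\in M_k(A_n)\otimes_{A_n}M_k(A_n)^{o}.$$
This reduces the simplicial identity to the ordinary matrix-ring identity over the commutative ring $A_n$ for each $n$.

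The second step is the identity itself. We have $\mu(e)=\sum_i E_{i1}E_{1i}=\sum_i E_{ii}=1$. For the centralizing condition, write $a=\sum_{p,q} a_{pq}E_{pq}$ with $a_{pq}\in A_n$. Scalars are central and can be moved across $\otimes_{A_n}$, so it suffices to take $a=E_{pq}$. Recall that the action is $(x\otimes y)\cdot(u\otimes v)=xu\otimes vy$ in $A^e$, where the second factor carries the opposite multiplication. With this convention,
$$(E_{pq}\otimes 1)\sum_i E_{i1}\otimes E_{1i}=E_{p1}\otimes E_{1q},$$
$$(1\otimes E_{pq})\sum_i E_{i1}\otimes E_{1i}=\sum_i E_{i1}\otimes E_{1i}E_{pq}=E_{p1}\otimes E_{1q}.$$
The two sides agree, so $e$ is a separability idempotent.

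The main obstacle is bookkeeping rather than mathematics. One must make sure the opposite-multiplication convention matches the action $(a\otimes b)\cdot x=axb$ of Definition \ref{yydef2.6}, and that the relevant property is the levelwise equality for $s_0^n(e)$. If that equality holds, the element $e$ satisfies condition (3) of Proposition \ref{yypro2.7}. Concretely, $J_{A/R}s_0^n(e)=0$ because $J$ is generated by the elements $a\otimes1-1\otimes a$, which kill $s_0^n(e)$. It follows that $M_k(A)$ is separable over $A$.
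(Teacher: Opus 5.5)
Your proposal is correct and takes essentially the same route as the paper. The paper uses $e=\sum_{i} E_{ij}\otimes E_{ji}$ for an arbitrary fixed $j$, where you take $j=1$. It checks $\mu(e)=I_k$, uses $A_m$-linearity to reduce $J_{M_k(A)/A}\,s_0^m(e)=0$ to the generators $E_{pq}\otimes I_k-I_k\otimes E_{pq}$, and verifies $(E_{pq}\otimes I_k)s_0^m(e)=E_{pj}\otimes E_{jq}=(I_k\otimes E_{pq})s_0^m(e)$, exactly as you do.
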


\begin{proof}
Let $1 \leq j \leq k$ be fixed.
By abuse of notation, let $E_{pq}$ be the $k \times k$ matrix with $1_{A_n}$ in the $(p,q)$-entry and 0 elsewhere and $I_k$ be the identity matrix in each level $n$ of $M_k(A)$. 
Let $M_k(A)^e= M_k(A) \sotimes_A M_k(A)^{o}.$
Define $e=\sum_{i=1}^{k} E_{ij} \otimes_{A_0} E_{ji} \in M_k(A_0)^e$ and then, by abuse of notation, $s_0^m(e)=\sum_{i=1}^{k} E_{ij} \otimes_{A_m} E_{ji}.$

It is clear that 
$\mu_{M_k(A)/A}(e)=\sum_{i=1}^{k}E_{ii}=I_k= 1_{M_k(A_0)}.$
By Proposition \ref{yypro2.7},
it remains to show that $J_{M_k(A)/A}e=0$.
Recall, for every $m\geq 0$,
\begin{align*}
    (J_{M_k(A)/A})_m=\sum_{E\in M_k(A_m)} M_k(A_m)^e(E\otimes_{A_m} I_k - I_k\otimes_{A_m} E)
\end{align*} 
for every $E=(a_{pq})\in M_k(A_m)$.
Since $E=\sum a_{pq}E_{pq}=\sum E_{pq}a_{pq}$,
by $A_m$-linearity,
\begin{align*}
     a_{pq} E_{pq} \otimes_{A_m} I_k - I_k \otimes_{A_m} a_{pq}E_{pq} = (a_{pq}I_k \otimes_{A_m} I_k)(E_{pq} \otimes_{A_m} I_k - I_k \otimes_{A_m} E_{pq}).
\end{align*}
It suffices to note that, for every $m\geq 0,$
\begin{align*}
    (E_{pq} \otimes_{A_m} I_k)s_0^m(e) &=\sum_{i=1}^k E_{pq}E_{ij}  \otimes_{A_m} E_{ji} = E_{pj} \otimes_{A_m} E_{jq}\\
    &= \sum_{i=1}^k E_{ij}\otimes_{A_m} E_{ji}E_{pq}=(I_k \otimes_{A_m} E_{pq})s_0^m(e)
\end{align*}
which shows $(J_{M_k(A)/A})_m s_0^m(e)=0$.
So $e$ is a separability idempotent of $M_k(A)$ and $M_k(A)$ is separable over $A$.
\end{proof}

\begin{remark}
\label{yyrem2.17}
    When $A$ is a commutative sring,
    Proposition \ref{yypro2.16} shows the following. \begin{enumerate}
        \item[(1)] 
        A separable simplicial algebra over a
        non-constant sring need not be a constant sring, in contrast to Theorem \ref{yythm2.11}.
        Indeed, $M_k(A)$ is always separable over $A$ (Proposition \ref{yypro2.16}),
        but the underlying sring is not a constant sring when $A$ is not a constant sring.
    
    \item[(2)] The separability idempotent of the noncommutative $A$-salgebra $M_k(A)$ is not unique when $k\geq 2$,  
    in contrast to Remark \ref{yyrem2.9}(1).
    \end{enumerate}
\end{remark}

\section{Separability in Simplicial Enrichment}
\label{yysec3}

We follow \cite{Kelly1982} for the general theory of enriched categories.
Let $R$ be a commutative sring. 
Then $R \sMod$ is a closed symmetric monoidal category.
It follows that $A \sMod$ is tensored, cotensored, and
$R \sMod$-enriched for any $R$-salgebra $A$.
In this enriched setting,
we study the notion of separability through the simplicial centralizer and the enriched forgetful functor $\mathcal{U}_{A/R}:A \sMod \to R\sMod$.

\subsection{Basics of simplicial enrichment}
\label{yysec3.1}

Let $R$ be a commutative sring and 
recall the $R$-smodule $R\Delta^n = R\sotimes_{\Z} \Z\Delta^n.$
The following constructions of a simplicial Hom-set in $R \sMod$ agree with \cite[II.6]{Quillen1967}.

\begin{definition}
\label{yydef3.1}
    For any $R$-smodules $M, N$,
    we define the {\it simplicial Hom-set} $\shom_{R}(M,N)$ to be
    \begin{align*}
        \shom_{R}(M,N)_n:=\Hom_{R \sMod}(R\Delta^n \sotimes_R M, N)
    \end{align*}
    where each $\theta:[k]\to [n]$ in $\Delta$ induces $\theta^*:\shom_{R}(M,N)_n \to \shom_{R}(M,N)_k$ sending every $f:R\Delta^n \sotimes_R M \to N$ to
    \begin{align}
    \label{E3.1.1}
    \tag{E3.1.1}
         \theta^*(f): R\Delta^k \sotimes_R M \xrightarrow[]{(\theta \circ -) \sotimes_R Id_M }R\Delta^n \sotimes_R M\xrightarrow[]{f} N. 
    \end{align}
\end{definition}

\begin{lemma}
\label{yylem3.2}
    For any $R$-smodules $M$ and $N$,
    $\shom_{R}(M,N)$ is an $R$-smodule.
\end{lemma}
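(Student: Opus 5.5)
We have to show three things: each level $\shom_R(M,N)_n$ is an abelian group with an $R_n$-action, that action makes it an $R_n$-module, and the structure maps $\theta^*$ of \eqref{E3.1.1} are compatible with the actions, i.e. $\theta^*(rf)=\theta^*_R(r)\,\theta^*(f)$ for $r\in R_n$. Throughout we use that $R$ is commutative, so left $R$-smodule morphisms form abelian groups and are closed under the pointwise operations below.

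First, $\shom_R(M,N)$ is a simplicial abelian group. Each $\shom_R(M,N)_n=\Hom_{R\sMod}(R\Delta^n\sotimes_R M,N)$ is an abelian group under pointwise addition. Each $\theta^*$ is precomposition with the fixed morphism $(\theta\circ-)\sotimes_R Id_M$, so it is additive. Functoriality, $(\theta\sigma)^*=\sigma^*\theta^*$ and $Id^*=Id$, follows from $\theta\circ(\sigma\circ-)=(\theta\sigma)\circ-$ on $\Z\Delta^k$, because $\sotimes$ is functorial.

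Next, the $R_n$-action. For $r\in R_n$ and $f\in\shom_R(M,N)_n$, we must define $rf$ as an $R$-smodule morphism $R\Delta^n\sotimes_R M\to N$. The subtlety is that $r$ lives only in level $n$, while $f$ has components in every level $p$. We use the simplices of $\Delta^n$ to transport $r$. In level $p$, the module $(R\Delta^n\sotimes_R M)_p$ is generated over $R_p$ by elements $\alpha\otimes m$ with $\alpha\in\Hom_\Delta([p],[n])$ and $m\in M_p$. We set
\begin{align*}
(rf)_p(x\,\alpha\otimes m):=\alpha^*_R(r)\,f_p(x\,\alpha\otimes m),\qquad x\in R_p .
\end{align*}
Equivalently, $rf$ is the composite of $f$ with the endomorphism $\rho_r$ of $R\Delta^n\sotimes_R M$ given by $x\alpha\otimes m\mapsto x\alpha^*_R(r)\alpha\otimes m$. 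This $\rho_r$ is the map $R\Delta^n\to R\Delta^n$ corresponding, under Yoneda, to $r\in R_n$, tensored with $Id_M$. We then check three things in turn:
\begin{enumerate}
\item[(i)] $\rho_r$ is well defined and $R_p$-linear levelwise; this uses commutativity of $R_p$.
\item[(ii)] $\rho_r$ is simplicial. For $\beta:[q]\to[p]$ it sends $\beta^*(x\alpha\otimes m)=\beta^*_R(x)(\alpha\beta)\otimes\beta^*_M(m)$ to an element with coefficient $(\alpha\beta)^*_R(r)=\beta^*_R\alpha^*_R(r)$, which matches $\beta^*$ applied to $\rho_r(x\alpha\otimes m)$.
\item[(iii)] $\rho_{rs}=\rho_r\rho_s$, $\rho_{r+s}=\rho_r+\rho_s$ and $\rho_1=Id$.
\end{enumerate}
These give the $R_n$-module axioms for $f\mapsto rf=f\circ\rho_r$.

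Finally, compatibility with $\theta:[k]\to[n]$. Write $\iota_\theta=(\theta\circ-)\sotimes_R Id_M$. We need $f\rho_r\iota_\theta=(f\iota_\theta)\rho'_{\theta^*_R(r)}$, where $\rho'$ is the analogous endomorphism on $R\Delta^k\sotimes_R M$. It suffices to prove $\rho_r\iota_\theta=\iota_\theta\rho'_{\theta^*_R(r)}$. Check this on a generator $x\gamma\otimes m$ with $\gamma:[p]\to[k]$:
\begin{enumerate}
\item[] the left side gives $x(\theta\gamma)^*_R(r)\,\theta\gamma\otimes m$;
\item[] the right side gives $x\gamma^*_R\theta^*_R(r)\,\theta\gamma\otimes m$.
\end{enumerate}
These agree by functoriality of $R$.

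The main obstacle is the second step: defining the $R_n$-action correctly. Defining it naively as $r\cdot f(-)$ does not make sense, since $r$ exists only in level $n$. The Yoneda-transport via $\alpha^*_R(r)$ is what makes the action both simplicial and compatible with the $\theta^*$. Once that construction is written down, the remaining verifications are routine.
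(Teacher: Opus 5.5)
Your proposal is correct and follows the paper's proof: your action $(rf)(x\alpha\otimes m)=\alpha^*_R(r)\,f(x\alpha\otimes m)$ is exactly the paper's formula \eqref{E3.2.1}, and your compatibility check $\theta^*(rf)=\theta^*_R(r)\,\theta^*(f)$ is the verification the paper asserts. Writing $rf=f\circ\rho_r$ with the Yoneda endomorphism $\rho_r$ is only a repackaging, and it makes explicit two things the paper leaves to the reader: that $rf$ is simplicial, and that the module axioms hold.
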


\begin{proof}
    It is clear that $\shom_R(M,N)$ is a simplicial abelian group.
    We specify the $R$-action on $\shom_{R}(M,N)$ as follows:
    \begin{align*}
        \mu: R \sotimes_{\Z} \shom_R(M,N) \to \shom_R(M,N) \qquad r\otimes f \mapsto r\cdot f 
    \end{align*}
    for any $r \otimes f\in R_n \otimes_{\Z} \shom_R(M,N)_n$ where
    for every $\sigma \in \Delta^n_k$ and $x\in M_k$,
    \begin{align}
    \label{E3.2.1}
    \tag{E3.2.1}
        (r \cdot f)(\sigma \otimes x) := \sigma^*_R(r)\cdot_N f(\sigma \otimes x).
    \end{align}
    By \eqref{E3.2.1} and \eqref{E3.1.1},
    one can verify that $\theta^*(r\cdot f) =\theta^*_R(r) \cdot \theta^*(f)$ for every $\theta:[k] \to [n]$ in $\Delta.$
    Therefore, $\mu$ is a well-defined simplicial map.
    Moreover, $\mu$ satisfies the associativity and unitality in Definition \ref{yydef1.2} as $N$ is an $R$-smodule.
\end{proof}

\begin{remark}
\label{yyrem3.3}
    Retain the notation in Definition \ref{yydef3.1}.
    
    \begin{enumerate}
        \item[(1)] We call a morphism in $\shom_R(M,N)_n$ an {\it $n$-morphism},
        whereas
        a morphism in $\shom_{R}(M,N)_0=\Hom_{R\sMod}(M,N)$ is called a {\it (0-)morphism}.

        \item[(2)] Recall that $R\sMod$ is an abelian category (Remark \ref{yyrem1.8}(3)).
        When we refer to a 
        short exact sequence in $R\sMod$, 
        we always 
        use $0$-morphisms.
    \end{enumerate}
\end{remark}

\begin{definition}
\label{yydef3.4}
    Let $M,N,$ and $L$ be $R$-smodules.
    Define the composition
    \begin{align}
    \label{E3.4.1}
    \tag{E3.4.1}
         \circ  : \shom_R(N,L) \sotimes_R \shom_{R}(M,N) \to \shom_R(M,L)
    \end{align}
    where for every $f\in \shom_R(N,L)_n$ and $g\in \shom_{R}(M,N)_n$,
    their composition is
    \begin{align}
    \label{E3.4.2}
    \tag{E3.4.2}
        f\circ g:= R\Delta^n \sotimes_R M \xrightarrow[]{diag \sotimes_R Id_M } R\Delta^n \sotimes_R R\Delta^n \sotimes_R M \xrightarrow[]{Id_{R\Delta^n} \sotimes_R g} R\Delta^n \sotimes_R N \xrightarrow[]{f} L.
    \end{align}
    as a composition of $R$-smodule ($0$-)morphisms.
    For any $\sigma\in \Delta^n_k$ and $x\in M_k$,
    \begin{align}
    \label{E3.4.3}
    \tag{E3.4.3}
        (f\circ g)(\sigma \otimes x)=f\big( \sigma \otimes g(\sigma \otimes x)\big).
    \end{align}
    By \eqref{E3.2.1} and \eqref{E3.4.3}, 
    the composition $\circ$ also respects the $R$-action
    as
    \begin{align}
    \label{E3.4.4}
    \tag{E3.4.4}
        (r\cdot f) \circ g = f\circ (r\cdot g) = r\cdot (f\circ g)
    \end{align}
    for any $r\in R_n$.
    From \eqref{E3.4.3},
    it is easy to verify $\theta^*(f\circ g)=\theta^*(f)\circ \theta^*(g)$ which means $\circ$ is a well-defined simplicial map.
    It is also easy to verify $\circ$ is associative.
\end{definition}

\begin{remark}
\label{yyrem3.5}
Let $R$ be a commutative sring.
    \begin{enumerate}
        \item[(1)] For every $R$-smodule $M$, 
        the {\it identity $n$-morphism} on $M$ is
        \begin{align}
        \label{E3.5.1}
        \tag{E3.5.1}
            s_0^n(Id_M)(\sigma \otimes x)=x
        \end{align}
        for every $\sigma \in \Delta^n_k$ and $x\in M_k$.
        One can verify that $s_0^n(Id_M)$ is the unit with respect to the composition $\circ$ given in \eqref{E3.4.1}.

        \item[(2)] For any $R$-smodule $M$, 
        we note $\big(\shom_R(M,M), \circ\big)$ is an sring 
        and an $R$-salgebra via the sring morphism (due to \eqref{E3.4.4})
        \begin{align*}
            R\to \shom_R(M,M) \qquad r\mapsto r\cdot s_0^n(Id_M).
        \end{align*}
        Then, with respect to $\circ$, we  note that $\shom_R(M,N)$ is a right $\shom_R(M,M)$-smodule, 
        whereas $\shom_R(P,M)$ is a left $\shom_R(M,M)$-smodule for every $R$-smodules $P$ and $N$.

        \item[(3)] The category $(R \sMod, \sotimes_R, R)$ is a closed symmetric monoidal category (\cite[Theorem 4.1(2)]{SS2000} or \cite[Lemma 2.2.8]{HSS2000}).

        \item[(4)] Let $A$ be any $R$-salgebra.
        The simplicial Hom-set of left $A$-smodules is
        $\shom_{A}(M,N)_n = \Hom_{A\sMod} (R\Delta^n \sotimes_R M , N)$ where $R\Delta^n \sotimes_R M $ admits the left $A$-action on the $M$-component.
    \end{enumerate}
\end{remark}

Recall the notion of a {\it tensored} and {\it cotensored} enriched category \cite[p.~38]{Riehl2014}.

\begin{theorem} 
\label{yythm3.6}
    Let $R$ be a commutative sring. 
    Suppose $A$ is an $R$-salgebra.
    Then $A\sMod$ is tensored, cotensored, and $R\sMod$-enriched.
    In particular, there are isomorphisms of $R$-smodules
    \begin{align*}
        \shom_A(V \sotimes_R M, N) \cong \shom_R(V, \shom_A(M,N)) \cong \shom_A(M, \shom_R(V,N))
    \end{align*}
    that are enriched natural for every $R$-smodule $V$ and left $A$-smodules $M,N$.
\end{theorem}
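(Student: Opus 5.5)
We need to exhibit the $R\sMod$-enrichment of $A\sMod$ together with the tensor $V\sotimes_R M$ (with $A$ acting on the $M$ factor) and the cotensor $\shom_R(V,N)$ (with $A$ acting through $N$, i.e.\ $(a\cdot f)(\sigma\otimes v)=\sigma^*_A(a)f(\sigma\otimes v)$ as in \eqref{E3.2.1}). Then we produce the two isomorphisms levelwise and check they are simplicial, $R$-linear and enriched natural.

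\textbf{Step 1: enrichment.} By Remark \ref{yyrem3.5}(4), $\shom_A(M,N)_n=\Hom_{A\sMod}(R\Delta^n\sotimes_R M,N)$ is a sub-$R$-smodule of $\shom_R(M,N)$. It consists of those $n$-morphisms $f$ with $f(\sigma\otimes ax)=af(\sigma\otimes x)$. The composition \eqref{E3.4.2} and the identities \eqref{E3.5.1} restrict to $A$-linear maps, since $diag$ and $g$ only touch the $R\Delta^n$-factor. Associativity and unitality are inherited from Definition \ref{yydef3.4} and Remark \ref{yyrem3.5}. Hence $A\sMod$ is $R\sMod$-enriched.

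\textbf{Step 2: the isomorphisms at level $n$.} We reduce to the $0$-level adjunction using
$$R\Delta^n\sotimes_R (V\sotimes_R M)\cong (R\Delta^n\sotimes_R V)\sotimes_R M.$$
The key input is the closed structure of $(R\sMod,\sotimes_R,R)$ (Remark \ref{yyrem3.5}(3)), which gives $\Hom_{R\sMod}(W\sotimes_R M,N)\cong\Hom_{R\sMod}(W,\shom_R(M,N))$. This restricts to $A$-linear maps on both sides: an $R$-linear $F\colon W\sotimes_R M\to N$ is $A$-linear in the $M$-variable exactly when its adjoint lands in $\shom_A(M,N)$. Taking $W=R\Delta^n\sotimes_R V$ gives
$$\shom_A(V\sotimes_R M,N)_n\cong \Hom_{R\sMod}(R\Delta^n\sotimes_R V,\shom_A(M,N))=\shom_R(V,\shom_A(M,N))_n.$$
For the second isomorphism we use the symmetry $V\sotimes_R M\cong M\sotimes_R V$ and adjoin in the $V$-variable:
$$\Hom_{A\sMod}(R\Delta^n\sotimes_R M\sotimes_R V,N)\cong\Hom_{A\sMod}(R\Delta^n\sotimes_R M,\shom_R(V,N)).$$
Here one checks that $\shom_R(V,N)$ with the action above is an $A$-smodule, by the argument of Lemma \ref{yylem3.2} with $A$ in place of $R$, using that $A$ acts compatibly with the $R$-action.

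Explicitly, everything is given by currying on simplices. A map $F$ on level $k$ elements $\sigma\otimes v\otimes x$ corresponds to $\tilde F(\sigma\otimes v)\in\shom_A(M,N)_k$, defined by
$$\tilde F(\sigma\otimes v)(\tau\otimes x)=F\bigl((\sigma\tau)\otimes\tau^*v\otimes x\bigr)\quad\text{for }\tau\in\Delta^k_j .$$
The inverse is evaluation at $\tau=\mathrm{id}$.

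\textbf{Step 3: compatibility and naturality.} We check three things:
\begin{enumerate}
\item[(a)] the bijections commute with $\theta^*$ as in \eqref{E3.1.1}, since both sides precompose with $\theta\circ-$ on the $\Delta^n$ factor;
\item[(b)] they are $R$-linear, by comparing with \eqref{E3.2.1};
\item[(c)] they are enriched natural in $V,M,N$, i.e.\ compatible with the composition maps \eqref{E3.4.1}.
\end{enumerate}
Item (c) is the main obstacle, or at least the bulk of the bookkeeping, because naturality must hold with respect to $n$-morphisms and not just $0$-morphisms. I would verify it via formula \eqref{E3.4.3}: composing with an $n$-morphism $g$ evaluates $g$ on the same simplex $\sigma$. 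Under the currying formula above, both routes send $\sigma\otimes v\otimes x$ to the same element $f(\sigma\otimes g(\sigma\otimes\cdots))$.

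Alternatively, I would cite the general fact (Kelly, Riehl) that an ordinary adjunction $-\sotimes_R M\dashv\shom_A(M,-)$ between $R\sMod$ and $A\sMod$, which is already $R\sMod$-enriched, upgrades to an enriched adjunction. With that, tensoredness and cotensoredness follow formally.
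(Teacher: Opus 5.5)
Your argument is correct, but it takes a different route from the paper. The paper does no computation: it invokes the general theory of modules over a monoid in a closed symmetric monoidal category (\cite{LM2007}, \cite{Hovey1999}), applied to the monoid $A$ in $(R\sMod,\sotimes_R,R)$. You instead work directly in the paper's concrete model $\shom_A(M,N)_n=\Hom_{A\sMod}(R\Delta^n\sotimes_R M,N)$. You restrict the closed structure of $R\sMod$ to $A$-linear maps and write both isomorphisms as explicit currying, $\tilde F(\sigma\otimes v)(\tau\otimes x)=F(\sigma\tau\otimes\tau^*v\otimes x)$, with inverse given by evaluation at $\mathrm{id}$.

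This gains you two things the citation leaves implicit. First, you never need to identify the paper's levelwise $\shom_A(M,N)$ with the equalizer of $\shom_R(M,N)\rightrightarrows\shom_R(A\sotimes_R M,N)$ that the general theory uses. Second, you obtain the explicit evaluation-at-$Id_{[n]}$ formulas that the paper relies on later (the map $K$ in Lemma \ref{yylem3.9}(3) and Corollary \ref{yycor3.13}). The citation, in turn, gains brevity and generality.

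Two caveats. Your step (c) is only sketched, and it must be carried out in all three variables. It does go through: for $h\in\shom_A(N,N')_p$ and $F\in\shom_A(V\sotimes_R M,N)_p$, both routes send $\sigma\otimes v$ and $\tau\otimes x$ to $h(\sigma\tau\otimes F(\sigma\tau\otimes\tau^*v\otimes x))$, and the checks in $M$ and $V$ are analogous. Your fallback citation, however, is misstated as written. An ordinary adjunction between $\mathcal{V}$-functors does not automatically upgrade to a $\mathcal{V}$-adjunction, because ordinary naturality only sees $0$-morphisms, not $n$-morphisms. The extra compatibility needed is essentially your step (c), so it is the direct verification that actually carries your proof.
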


\begin{proof}
    This follows from \cite[p.~395-396]{LM2007} and is implicit in \cite[Section 4.1]{Hovey1999}.
\end{proof}

\subsection{Centralizers}
\label{yysec3.2}

We extend the classical notion of centralizers \cite[(4.5)]{Ford2017} to the $R\sMod$-enriched setting and use it to characterize separability.

\begin{definition}
\label{yydef3.7}
    Let $A$ be an $R$-salgebra
    and $M$ be a left $A^e$-smodule.
    We define the {\it centralizer of $A$ in $M$} to be $M^A$, where
    \begin{align}
    \label{E3.7.1}
    \tag{E3.7.1}
       (M^A)_n:= \{x\in M_n \mid (J_{A/R})_k\theta^*_M(x) = 0, \forall \theta: [k] \to [n] \text{ in } \Delta\}
    \end{align}
    for every $n\geq 0$ and $J_{A/R}= \ker (\mu_{A/R}: A^e  \to A)$ as in Definition \ref{yydef2.6}.
\end{definition}

\begin{remark}
\label{yyrem3.8}
Let $A$ be an $R$-salgebra.
\begin{enumerate}

    \item[(1)] Any left $A^e$-smodule is equivalently an $(A,A)$-sbimodule.    
    Consequently,
    the centralizer of $A$ in $M$ can also be identified with
    \begin{align*}
        (M^A)_n:= \{x\in M_n \mid a \theta^*_M(x) =\theta^*_M(x)a, \forall \theta:[k] \to [n] \text{ in $\Delta$ and } a\in A_k\}.
    \end{align*}

    \item[(2)] Therefore, $A^A=Z(A)$ when $A$ is viewed as a left $A^e$-smodule.
    Similarly,
    if $I$ is an sideal of $A,$
    then $(A/I)^A=Z(A/I).$
\end{enumerate}
    
\end{remark}

The following generalizes \cite[Lemma 4.1.4]{Ford2017} in $R\sMod$-enriched setting.

\begin{lemma}
\label{yylem3.9}
    Let $A$ be an $R$-salgebra.
    Let $M$ be a left $A^e$-smodule.
    \begin{enumerate}
        \item[(1)] $M^A$ is an $R$-smodule.

        \item[(2)] $(-)^A: A^e \sMod \to R \sMod$ is an $R \sMod$-enriched functor.

        \item[(3)] $(-)^A \cong \shom_{A^e}(A,-)$ as  $R \sMod$-enriched functors.
    \end{enumerate}
\end{lemma}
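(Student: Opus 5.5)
For part (1) I would first check that $M^A$ is a simplicial subset of $M$. Given $x\in (M^A)_n$ and $\sigma:[m]\to[n]$, any $\theta:[k]\to[m]$ gives $\theta^*_M(\sigma^*_M(x))=(\sigma\theta)^*_M(x)$. This element is killed by $(J_{A/R})_k$, so $\sigma^*_M(x)\in (M^A)_m$. This is exactly the argument of Lemma \ref{yylem1.6}. Next, $(M^A)_n$ is an $R_n$-submodule of $M_n$. For $r\in R_n$ we have $\theta^*_M(rx)=\theta^*_R(r)\theta^*_M(x)$. Since $R$ maps into the center of $A^e$ levelwise (through $\eta_{A/R}$, which lands in $Z(A)$), for $j\in (J_{A/R})_k$ we get $j\,\theta^*_R(r)\theta^*_M(x)=\theta^*_R(r)\,j\,\theta^*_M(x)=0$. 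Additivity is clear, so $M^A$ is a sub-$R$-smodule.

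For part (3) I would build the isomorphism levelwise and check naturality afterwards. An $n$-morphism $f\in\shom_{A^e}(A,M)_n=\Hom_{A^e\sMod}(R\Delta^n\sotimes_R A,M)$ is determined by its values on the elements $\sigma\otimes 1_{A_k}$ with $\sigma\in\Delta^n_k$, because $f(\sigma\otimes a)=f(\sigma\otimes (a\otimes 1)1)=(a\otimes1)f(\sigma\otimes 1)$. Compatibility with the simplicial structure gives $f(\sigma\otimes 1_{A_k})=\sigma^*_M\big(f(\iota_n\otimes 1_{A_n})\big)$, where $\iota_n=Id_{[n]}\in\Delta^n_n$. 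This suggests the map
\begin{align*}
\Phi_n:\shom_{A^e}(A,M)_n\to (M^A)_n,\qquad f\mapsto f(\iota_n\otimes 1_{A_n}).
\end{align*}
The image lies in $M^A$. Indeed, $\theta^*_M(x)=f(\theta\otimes 1_{A_k})$, and $J_{A/R}$ annihilates $1\in A$ because it is the kernel of $\mu_{A/R}$. Hence $jf(\theta\otimes1)=f(\theta\otimes j\cdot 1)=0$. For the inverse, given $x\in(M^A)_n$, I would define $f_x(\sigma\otimes a):=(a\otimes 1)\sigma^*_M(x)$. The centralizer condition is what makes this well defined as an $A^e$-linear map out of $A\cong A^e/J_{A/R}$ at each level $k$ and for each $\sigma$. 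I would then check that $f_x$ is simplicial and $R$-linear, using \eqref{E3.2.1}, and that $\Phi$ commutes with the maps $\theta^*$ and with the $R$-action. The two constructions are mutually inverse by the uniqueness observation above.

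Part (2) then follows from (3), since $\shom_{A^e}(A,-)$ is an $R\sMod$-enriched representable functor by Theorem \ref{yythm3.6} and Remark \ref{yyrem3.5}. Alternatively, I would write the action on hom-objects directly, restricting an $n$-morphism $g\in\shom_{A^e}(M,N)_n$ to $M^A$. The main obstacle is showing that the levelwise isomorphism $\Phi$ is \emph{enriched} natural, that is, compatible with the action of $n$-morphisms $g$ and not just $0$-morphisms. Under $\Phi$, postcomposition $g\circ f$ from \eqref{E3.4.3} corresponds to $x\mapsto g(\iota_n\otimes x)$. So I would verify that this assignment is a simplicial $R$-map $\shom_{A^e}(M,N)\sotimes_R M^A\to N^A$ compatible with the composition \eqref{E3.4.2}, and define the enriched structure in (2) to be exactly this map, so that (2) and (3) are proved together.
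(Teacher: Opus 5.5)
Your proposal is correct and follows the paper's proof essentially step for step: your $\Phi$ is the paper's $K$, your $f_x$ is the paper's inverse $\Omega(x)$, and part (1) is argued the same way. The only difference is packaging: you express the enriched structure in (2), and the naturality in (3), in the adjoint ``action'' form $\shom_{A^e}(M,N)\sotimes_R M^A\to N^A$, $g\otimes x\mapsto g(Id_{[n]}\otimes x)$, whereas the paper restricts $g$ to $R\Delta^n\sotimes_R M^A$ and checks a commutative square; the two formulations correspond under the tensor--hom adjunction of Remark~\ref{yyrem3.5}(3).
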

\begin{proof}
    (1):  Note that any $r\in R_n$ acts on $M$ via $r\otimes 1_{A_n}$ and equivalently $1_{A_n}\otimes r$.
    Since the $R$-action commutes with the $A^e$-action, 
    the $R$-action commutes with the $(J_{A/R})_n$-action.
    Therefore,
    each $(M^A)_n$ is an $R_n$-module.

    So it suffices to show that $M^A$ is a simplicial subset of $M.$
    Let $x\in (M^A)_n$ and $\sigma:[m] \to [n]$ in $\Delta$ be arbitrary.
    For any $\theta:[k]\to [m]$ in $\Delta,$
    by \eqref{E3.7.1},
    \begin{align*}
        (J_{A/R})_k\theta^*_M\big( \sigma^*_M(x)\big)= (J_{A/R})_k(\sigma\theta)^*_M(x)= 0
    \end{align*}
    which implies $\sigma^*_M(x)\in (M^A)_m$ and $M^A$ is thus a simplicial subset of $M.$

    (2): Let $f\in \shom_{A^e}(M,N)_n$.
    It is clear that $(R\Delta^n \sotimes_R M)^A = R\Delta^n \sotimes_R M^A$ since $A^e$ acts solely on the $M$-component.
    We claim $f$ restricts to a morphism in $\shom_{R}(M^A, N^A)_n$.
    Let $\sigma \in \Delta^n_m$ and $x\in (M^A)_m$.
    For any $\theta: [k] \to [m]$ in $\Delta$,
    \begin{align*}
        (J_{A/R})_k\theta^*_N\big( f(\sigma \otimes x) \big) &= (J_{A/R})_kf\big( (\sigma \theta)\otimes \theta^*_M(x) \big) \\
        &= f\big( (\sigma \theta)\otimes (J_{A/R})_k\theta^*_M(x) \big) =0
    \end{align*}
    which implies $f(\sigma\otimes x)\in (N^A)_m$ and the claim holds.
    Clearly $Id_M$ induces $Id_{M^A}$ by restriction.
    Moreover, since restriction 
    respects morphism composition $\circ$ in Definition \ref{yydef3.4},
    $(-)^A$ is an enriched functor $A^e \sMod \to R\sMod$.
    
    (3): Define $K: \shom_{A^e}(A,M) \to M$ by $f \mapsto K(f)=f(Id_{[n]} \otimes 1_{A_n})$
    which is a simplicial map since
    \begin{align*}
        K(\theta^*(f)) &=\theta^*(f)(Id_{[k]}\otimes 1_{A_k}) =f(\theta \otimes 1_{A_k}) \\
        &=\theta^*\big( f(Id_{[n]}\otimes 1_{A_n})\big) = \theta^*(K(f))
    \end{align*}
    for any $\theta:[k]\to [n]$ in $\Delta.$
    Furthermore, $K$ is also an $R$-smodule morphism since \begin{align*}
        K(r\cdot f)=rf(Id_{[n]}\otimes 1_{A_n})=rK(f)
    \end{align*} for every $r\in R_n$ and $f\in \shom_{A^e}(A,M)_n.$
    For every $\theta:[k]\to [n]$ in $\Delta,$
    \begin{align*}
    (J_{A/R})_k \theta^*\big( K(f) \big) = (J_{A/R})_k f(\theta \otimes 1_{A_k})=f(\theta \otimes (J_{A/R})_k 1_{A_k})=0
    \end{align*}
    which implies that $K(f)\in (M^A)_n$.
    Therefore, $K$ determines an $R$-smodule morphism $\shom_{A^e}(A,M)\to M^A.$
    
    Conversely, define $\Omega:M^A  \to \shom_{A^e}(A,M)$ by
    sending every $x\in (M^A)_n$
    to the left $A^e$-smodule morphism $\Omega_n(x)$ where
    \begin{align*}
        \Omega(x)(\sigma \otimes a)=(a\otimes 1_{A_k})\cdot_M \sigma^*_{M}(x)=(1_{A_k}\otimes a)\cdot_M \sigma^*_M(x)
    \end{align*}
    for every $\sigma \in \Delta^n_k$ and $a\in A_k.$
    Then $A^e$-linearity of $\Omega(x)$ follows from $x\in (M^A)_n$.
    It is easy to verify that $\Omega$ is an $R$-smodule morphism and is the inverse of $K$.
    Therefore, both $K$ and $\Omega$ are $R$-smodule isomorphisms.

    It remains to show that $K$ is an $R\sMod$-enriched natural transformation.
    Let $\phi\in \shom_{A^e}(M,N)_n$ and consider the diagram 
    \[\begin{tikzcd}[column sep= large]
        \shom_{A^e}(M,N)_n \arrow[d, "\shom_{A^e}(A{,}-)"'] \arrow[r, "(-)^A"] & \shom_R(M^A, N^A)_n \arrow[d,"^*K"]\\
          \shom_R\big( \shom_{A^e}(A, M),\shom_{A^e}(A, N) \big)_n  \arrow[r,"K^*"] & \shom_R\big(\shom_{A^e}(A, M), N^A \big)_n
    \end{tikzcd}\]
    where $K^*=K \circ -$ and $^*K=-\circ (Id_{R\Delta^n}\sotimes_R K) $ as compositions of $0$-morphisms.
    Note that, for any $\sigma\in \Delta^n_k$ and $f\in  \shom_{A^e}(A,M)_k$, 
    \begin{align*}
        \big( \phi \circ (Id_{R\Delta^n} \sotimes_R K)\big)(\sigma \otimes f) &=\phi\big(\sigma \otimes K(f) \big)=\phi\big(\sigma \otimes f(Id_{[k]}\otimes 1_{A_k}) \big)\\
        &=\sigma^*(\phi)\big( Id_{[k]} \otimes f(Id_{[k]}\otimes 1_{A_k})\big)\\
        &=K(\sigma^*(\phi) \circ f).
    \end{align*}
    So the diagram commutes for every $n\geq 0$ and $R\sMod$-enriched naturality follows. Therefore, $K$ is an $R\sMod$-enriched natural isomorphism.
\end{proof}

The following proposition is the simplicial analogue of \cite[Corollary 4.1.7]{Ford2017}.
The exactness in the proposition below refers to the functor between the (underlying) abelian categories $A^e\sMod$ and $R\mathrm{sMod}$.

\begin{proposition}
\label{yypro3.10}
    Let $A$ be an $R$-salgebra.
    Then $A$ is separable over $R$ if and only if the centralizer functor $(-)^A: A^e\sMod \to R\sMod$ is exact.
\end{proposition}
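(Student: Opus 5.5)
The plan is to use Lemma \ref{yylem3.9}(3), which identifies $(-)^A$ with $\shom_{A^e}(A,-)$. We then compare exactness of this functor with projectivity of $A$ as a left $A^e$-smodule, which is condition (1) of Proposition \ref{yypro2.7}. Note that $(-)^A$ is always left exact: kernels are computed levelwise, and the defining condition \eqref{E3.7.1} is preserved under injections. So the content is right exactness, i.e.\ surjectivity of $M^A\to N^A$ for any surjection $M\to N$ of left $A^e$-smodules.

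($\Rightarrow$) Suppose $A$ is separable, and take a separability idempotent $e\in (A^e)_0$ as in Proposition \ref{yypro2.7}(3)/(4). First I will show that for any left $A^e$-smodule $M$ and $x\in M_n$, the element $s_0^n(e)x$ lies in $(M^A)_n$. The reason is that $J_{A/R}s_0^n(e)$-type relations pass through face and degeneracy maps. Concretely, for $\theta:[k]\to[n]$ we have $\theta^*_M(s_0^n(e)x)=s_0^k(e)\theta^*_M(x)$, and $(J_{A/R})_k s_0^k(e)=0$ because $J_{A/R}e=0$ holds levelwise (via $s_0^k$ and the description of $J_{A/R}$ in Definition \ref{yydef2.6}). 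Second, if $x\in (M^A)_n$, then $s_0^n(e)x=\mu(s_0^n(e))x=x$. To see this, write $s_0^n(e)=\sum a_i\otimes b_i$; then $s_0^n(e)x=\sum a_ixb_i=\sum a_ib_ix=x$, using Remark \ref{yyrem3.8}(1) and $\mu(e)=1$. Now, given a surjection $f:M\to N$ and $y\in (N^A)_n$, pick any preimage $x\in M_n$. Then $s_0^n(e)x\in (M^A)_n$ and $f(s_0^n(e)x)=s_0^n(e)y=y$. So $(-)^A$ is right exact, hence exact. Alternatively, one can argue more abstractly: projectivity of $A$ makes $\Hom_{A^e\sMod}(A,-)$ exact, and then one uses Theorem \ref{yythm3.6} levelwise, $\shom_{A^e}(A,M)_n\cong\Hom_{A^e\sMod}(A,\shom_R(R\Delta^n,M))$. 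The elementwise argument is cleaner, so I will use it.

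($\Leftarrow$) Suppose $(-)^A$ is exact, and apply it to the surjection $\mu_{A/R}:A^e\to A$. This gives a surjection $(A^e)^A\to A^A=Z(A)$ (Remark \ref{yyrem3.8}(2)). In level $0$, choose $e\in ((A^e)^A)_0$ with $\mu(e)=1_{A_0}$. Membership in the centralizer means $(J_{A/R})_k\theta^*(e)=0$ for all $\theta$; taking $\theta=\mathrm{Id}$ gives $J_{A/R}e=0$ in level $0$. The higher levels come from $\theta:[n]\to[0]$, i.e.\ $(J_{A/R})_n s_0^n(e)=0$. Hence $J_{A/R}e=0$ as left sideals, in the sense used in Proposition \ref{yypro2.7}(3), and $A$ is separable.

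The main obstacle is bookkeeping: checking that the condition ``$J_{A/R}e=0$'' in Proposition \ref{yypro2.7}(3) really means $(J_{A/R})_n s_0^n(e)=0$ for all $n$ (the interpretation of Lemma \ref{yylem2.1}), and that this matches the centralizer condition via the degeneracies $s_0^n$. Once that identification is made, both directions are short.
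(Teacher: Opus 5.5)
Your proposal is correct and follows essentially the same route as the paper. In the forward direction the paper also rests on $s_0^n(e)M_n\subseteq (M^A)_n$ with $s_0^n(e)$ fixing $(M^A)_n$; it phrases this as $M^A=eM$, a natural direct summand of $M$, and uses $1-s_0^n(e)\in (J_{A/R})_n$ where you use centrality and an element-lifting argument. The backward direction is identical: lift $1_{A_0}$ along $(A^e)^A\to A^A$ and read off $(J_{A/R})_n s_0^n(e)=0$ from $\theta:[n]\to[0]$.
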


\begin{proof}
    ($\implies$)
    Suppose $A$ is separable over $R.$
    Then there is an idempotent $e\in (A^e)_0$ such that $\mu(e)=1_{A_0}$ and $J_{A/R}e=0.$
    For any left $A^e$-smodule $M$,
    we claim that
    \begin{align}
    \label{E3.10.1}
    \tag{E3.10.1}
        M^A= eM \text{ as $R$-smodules.}
    \end{align}
    
    It is clear that $eM\subseteq M^A$ since $J_{A/R}(eM)=(J_{A/R}e)M=0$.
    Conversely, if $x\in (M^A)_k$,
    then $(J_{A/R})_k x=0$ which implies 
    $\big(1_{(A^e)_k}-s_0^k(e)\big)x=0$ 
    by Proposition \ref{yypro2.7}.
    This implies $x=s_0^k(e)x$.
    Therefore, $M^A \subseteq eM$ and the claim is proved.
    Since $e$ is an idempotent, $M$ decomposes as $R$-smodules as follows
    \begin{align*}
     M= eM \oplus (1_{(A^e)_0}-e)M = M^A \oplus  (1_{(A^e)_0}-e)M.
    \end{align*}
    Thus, $(-)^A$ is exact as it is the restriction to the direct summand $eM$.

    ($\impliedby$) Suppose $(-)^A$ is exact.
    Applying $(-)^A$ to the exact sequence of left $A^e$-smodules $0\to J_{A/R} \to A^e \xrightarrow[]{\mu_{A/R}} A \to 0,$
    we get a surjective $R$-smodule morphism $
    \overline{\mu_{A/R}}: (A^e)^A \to A^A.$
    Let $e \in ((A^e)^A)_0$ be such that $\overline{\mu_{A/R}}(e)=1_{A_0}$.
    By Definition \ref{yydef3.7}, $e \in ((A^e)^A)_0$ implies
    $(J_{A/R})_k s_0^k(e)=0$ for every $k\geq 0.$
    Therefore, $(J_{A/R})e=0$.
    Then $e$ is a separability idempotent and
    $A$ is separable by Proposition \ref{yypro2.7}.
\end{proof}

\begin{corollary}
\label{yycor3.11}
    Let $R$ be a commutative sring and $A$ be an $R$-salgebra.
    Then $\shom_{A^e}(A,-)$ is exact if and only if $\Hom_{A^e \sMod}(A,-)$ is exact.
\end{corollary}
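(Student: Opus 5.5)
The plan is to show that both conditions are equivalent to the separability of $A$ over $R$. The link between them is Lemma \ref{yylem3.9}(3), which identifies $\shom_{A^e}(A,-)$ with the centralizer functor $(-)^A$. Concretely, I will show that exactness of either functor amounts to the surjection $\mu_{A/R}: A^e \to A$ splitting as a map of left $A^e$-smodules. I will use Proposition \ref{yypro2.7} and Proposition \ref{yypro3.10} as the bridge.

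First, by Lemma \ref{yylem3.9}(3), $\shom_{A^e}(A,-)\cong (-)^A$ as $R\sMod$-enriched functors, and in particular as ordinary functors $A^e\sMod\to R\sMod$. Exactness is checked levelwise (Remark \ref{yyrem1.8}(3)), so $\shom_{A^e}(A,-)$ is exact if and only if $(-)^A$ is exact. By Proposition \ref{yypro3.10}, this holds if and only if $A$ is separable over $R$.

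Second, $\Hom_{A^e\sMod}(A,-)$ is exact if and only if $A$ is a projective left $A^e$-smodule; this is just Definition \ref{yydef2.3}. By Proposition \ref{yypro2.7} and Definition \ref{yydef2.8}, $A$ is projective exactly when it is separable over $R$. Chaining the two equivalences gives the corollary.

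An alternative, more direct route runs as follows. Note that $\Hom_{A^e\sMod}(A,M)=\shom_{A^e}(A,M)_0$, the level-$0$ part, so exactness of the simplicial Hom implies exactness of the ordinary Hom at once. For the converse, one would use that $\shom_{A^e}(A,M)_n=\Hom_{A^e\sMod}(R\Delta^n\sotimes_R A, M)$, together with the fact that $R\Delta^n\sotimes_R A$ is levelwise a direct sum of copies of $A$. The first route avoids the difficulty of showing projectivity of $R\Delta^n\sotimes_R A$, so I would use it. There is no real obstacle: the only point to check is that the isomorphism $K$ of Lemma \ref{yylem3.9}(3) is natural in the ordinary (0-morphism) sense. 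This follows from its enriched naturality, so that exactness transfers across it.
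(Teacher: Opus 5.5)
Your proposal is correct and follows the same route as the paper. You use Lemma \ref{yylem3.9}(3) to identify $\shom_{A^e}(A,-)$ with $(-)^A$, Proposition \ref{yypro3.10} to turn exactness of the centralizer into separability, and Proposition \ref{yypro2.7} (with Definition \ref{yydef2.3}) to turn separability into projectivity of $A$, which is exactness of $\Hom_{A^e\sMod}(A,-)$.
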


\begin{proof}
    By Lemma \ref{yylem3.9}(3), Proposition \ref{yypro3.10}, and Proposition \ref{yypro2.7},
    \begin{align*}
        \text{$\shom_{A^e}(A,-)$ is exact} & \iff \text{$(-)^A$ is exact} \\
        & \iff \text{$A$ is separable over $R$} \\
        & \iff \text{$A$ is a projective left $A^e$-smodule}
        \\
        &\iff \text{$\Hom_{A^e \sMod}(A,-)$ is exact}. \qedhere
    \end{align*} 
\end{proof}

\begin{corollary}
\label{yycor3.13}
    Let $R$ be a commutative sring and $A$ be an $R$-salgebra.
    Then there is an $R\sMod$-enriched adjunction
    \begin{align}
    \label{E3.12.1}
    \tag{E3.12.1}
        A \sotimes_R -: R\sMod \leftrightarrows A^e\sMod : (-)^A.
    \end{align}
\end{corollary}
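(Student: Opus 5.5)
The adjunction \eqref{E3.12.1} will come from the enriched tensor--hom adjunction in Theorem \ref{yythm3.6}, applied to the $R$-salgebra $A^e$, combined with the identification $(-)^A\cong \shom_{A^e}(A,-)$ of Lemma \ref{yylem3.9}(3). The left adjoint $A\sotimes_R -$ sends an $R$-smodule $V$ to $A\sotimes_R V$. This carries a left $A^e$-action on the $A$-factor, given levelwise by $(a\otimes b)\cdot(x\otimes v)=axb\otimes v$. Equivalently, $A\sotimes_R V\cong V\sotimes_R A$ via the symmetry of $(R\sMod,\sotimes_R,R)$ from Remark \ref{yyrem1.8}(4), with $A^e$ acting only on the $A$-factor. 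First I will check that $A\sotimes_R -$ is an $R\sMod$-enriched functor $R\sMod\to A^e\sMod$. An $n$-morphism $g\colon R\Delta^n\sotimes_R V\to W$ is sent to
\begin{align*}
R\Delta^n\sotimes_R (A\sotimes_R V)\cong A\sotimes_R (R\Delta^n\sotimes_R V)\xrightarrow{Id_A\sotimes_R g} A\sotimes_R W,
\end{align*}
which is $A^e$-linear because $A^e$ acts only on the $A$-factor. Compatibility with the composition \eqref{E3.4.3} and with identities \eqref{E3.5.1} is a levelwise check.

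Next, Theorem \ref{yythm3.6} with $A^e$ in place of $A$, $M=A$, and $N$ an arbitrary left $A^e$-smodule gives isomorphisms of $R$-smodules, enriched natural in $V$ and $N$:
\begin{align*}
\shom_{A^e}(V\sotimes_R A, N)\cong \shom_R\big(V,\shom_{A^e}(A,N)\big).
\end{align*}
Composing on the left with the symmetry $A\sotimes_R V\cong V\sotimes_R A$, which is an $A^e$-isomorphism, and on the right with the enriched natural isomorphism $K\colon \shom_{A^e}(A,-)\to(-)^A$ of Lemma \ref{yylem3.9}(3), yields
\begin{align*}
\shom_{A^e}(A\sotimes_R V, N)\cong \shom_R(V, N^A),
\end{align*}
enriched natural in both variables. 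An enriched adjunction is exactly such an enriched natural isomorphism of hom-objects, so this establishes \eqref{E3.12.1}.

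The main point needing care is enriched naturality of the composite isomorphism in $V$. Theorem \ref{yythm3.6} gives naturality in the $V$-variable, but it presents $V\sotimes_R A$ through the tensoring of $A^e\sMod$ over $R\sMod$. I must check that this tensoring agrees, as an enriched functor in $V$, with the functor $A\sotimes_R -$ constructed above. I would verify this directly with explicit formulas. The unit $V\to (A\sotimes_R V)^A$ is $v\mapsto 1_{A_n}\otimes v$, and it lands in the centralizer because $a1\otimes v=1a\otimes v$ after any face or degeneracy. The counit $A\sotimes_R N^A\to N$ is $a\otimes x\mapsto (a\otimes 1)\cdot x$, and it is $A^e$-linear precisely because $x$ is central in the sense of Remark \ref{yyrem3.8}(1). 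Once these are checked to be simplicial and $R$-linear, the triangle identities reduce to levelwise classical identities. This gives the adjunction independently and confirms that it agrees with the enriched one.
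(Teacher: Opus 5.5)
Your proposal is correct and follows essentially the same route as the paper, whose proof is the composite $\shom_{A^e}(A\sotimes_R V, M)\cong \shom_R(V,\shom_{A^e}(A,M))\cong \shom_R(V,M^A)$ obtained from Theorem \ref{yythm3.6} (applied to $A^e$) and Lemma \ref{yylem3.9}(3). Your additional checks (enriched functoriality of $A\sotimes_R -$, agreement of the tensoring with this functor, and the explicit unit $v\mapsto 1\otimes v$ and counit $a\otimes x\mapsto (a\otimes 1)\cdot x$) are details the paper leaves implicit, and they are sound.
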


\begin{proof}
    By Theorem \ref{yythm3.6} and Lemma \ref{yylem3.9}(3), 
    there are $R$-smodule isomorphisms
    \begin{align*}
        \shom_{A^e}(A \sotimes_R V, M) \cong \shom_R(V,\shom_{A^e}(A,M)) \cong \shom_R(V,M^A)
    \end{align*}
    natural for any $R$-smodule $V$ and left $A^e$-smodule $M$.
\end{proof}

\subsection{Separable enriched functors} 
\label{yysec3.3}

It is well-known that separability of a monoid object or a monad is related to the separability of some functors (\cite[2.9(1)]{BBW2009}, \cite[Proposition 6.3] {BV2007}, and \cite[Proposition 1.3(1)]{NdBvO1989}).
We now formulate an enriched version of \cite[Sect. 1]{NdBvO1989}.

\begin{definition}
\label{yydef3.13}
    Let $\mathcal{C}, \mathcal{D}$ be $R \sMod $-enriched categories.
    We say an $R \sMod $-enriched functor $F: \mathcal{C} \to \mathcal{D}$ is {\it separable}
    if 
    for every pair of  objects $X, Y$ of $ \mathcal{C}$ and the associated $R$-smodule morphism $$F=F_{X,Y}: \shom_{\mathcal{C}}(X,Y) \to \shom_{\mathcal{D}}\big(F(X), F(Y)\big)$$
    there is an $R$-smodule morphism $H=H_{X,Y}: \shom_{\mathcal{D}}\big( F(X), F(Y)\big) \to \shom_{\mathcal{C}} ( X, Y )$ such that
    \begin{enumerate}
        \item[(1)] For every $X, Y\in \text{Obj } \mathcal{C} $, $$H\big( F(f) \big)= f, \forall f\in \shom_{\mathcal{C}}(X,Y).$$

        \item[(2)] 
        For any objects $M,N, M', N'$ in 
        $ \mathcal{C} $, 
        and any morphisms $f\in \shom_{\mathcal{C}}(N,N')_n$, $f'\in \shom_{\mathcal{C}}(M,M')_n$,
        $g\in \shom_{\mathcal{D}}\big(F(M), F(N) \big)_n$, and $g'\in \shom_{\mathcal{D}}\big( F(M'), F(N')\big)_n$
        $$F(f)\circ g = g' \circ F(f') \quad \implies \quad f \circ H(g) = H(g')\circ f'.$$
 \end{enumerate} 
\end{definition}

\begin{remark}
\label{yyrem3.14}
    Retain the notation in Definition \ref{yydef3.13}.
    We note
    \begin{enumerate}
        \item[(1)]  For any object $X$ in $\mathcal{C}$, $H(Id_{F(X)})=H\big( F(Id_X)\big)=Id_{X}$.

        \item[(2)] If $f'=Id_M$ and $g'=F(f)\circ g$, condition (2) in Definition \ref{yydef3.13} implies
        $f\circ H(g)= H\big( F(f)\circ g\big)$ (note we do not require $H$ form a functor).
    \end{enumerate}
\end{remark}

\begin{lemma}
\label{yylem3.15}
    Let $A$ be an $R$-salgebra.
    For any $M,N\in A\sMod,$ $\shom_R(M,N)^A = \shom_A(M,N)$
    as $R$-smodules.
\end{lemma}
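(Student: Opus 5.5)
To prove Lemma \ref{yylem3.15}, we first fix the $A^e$-smodule structure on $\shom_R(M,N)$. For $a\otimes b\in (A^e)_n$ and $f\in \shom_R(M,N)_n$, we set
\begin{align*}
\big((a\otimes b)\cdot f\big)(\sigma\otimes x) := \sigma^*_A(a)\, f\big(\sigma\otimes \sigma^*_A(b)x\big)
\end{align*}
for $\sigma\in\Delta^n_k$ and $x\in M_k$. This mirrors \eqref{E3.2.1}. One checks that it is compatible with the structure maps \eqref{E3.1.1}, so $\shom_R(M,N)$ is an $(A,A)$-sbimodule, hence a left $A^e$-smodule by Remark \ref{yyrem3.8}(1). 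The two $R$-actions coincide because $R$ is central in $A$ and every $f$ is $R$-linear.

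Both sides of the lemma sit inside $\shom_R(M,N)$. By Remark \ref{yyrem3.5}(4), the smodule $\shom_A(M,N)_n=\Hom_{A\sMod}(R\Delta^n\sotimes_R M,N)$ consists of those $R$-linear $f$ that are also $A$-linear. So the proof reduces to a levelwise equality of subsets. By Remark \ref{yyrem3.8}(1), $f\in(\shom_R(M,N)^A)_n$ exactly when, for all $\theta:[k]\to[n]$ and $a\in A_k$,
\begin{align*}
a\cdot\theta^*(f)=\theta^*(f)\cdot a .
\end{align*}

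For the inclusion $\shom_A(M,N)\subseteq \shom_R(M,N)^A$, take $f$ $A$-linear. Then $\theta^*(f)$ is $A$-linear, being a composite of $f$ with $(\theta\circ-)\sotimes_R Id_M$. Evaluating at $\tau\otimes x$ with $\tau\in\Delta^k_j$, the condition becomes
\begin{align*}
\tau^*(a)\, f\big(\theta\tau\otimes x\big)= f\big(\theta\tau\otimes \tau^*(a)x\big),
\end{align*}
which is exactly $A$-linearity of $f$.

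For the converse, let $f\in(\shom_R(M,N)^A)_n$ and fix a generator $\sigma\otimes x$ with $\sigma\in\Delta^n_k$, $x\in M_k$, and $a\in A_k$. Apply the centralizer condition with $\theta=\sigma$. Evaluating $\sigma^*(f)$ at $Id_{[k]}\otimes x$ gives
\begin{align*}
a\,f(\sigma\otimes x)=f(\sigma\otimes ax).
\end{align*}
Since $A$ acts only on the $M$-factor of $R\Delta^n\sotimes_R M$, and this smodule is generated levelwise by elements $\sigma\otimes x$ over $R_k$, this identity is precisely $A$-linearity of $f$ at level $k$. Hence $f\in\shom_A(M,N)_n$.

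Both inclusions hold levelwise, so the two sides are equal as simplicial subsets of $\shom_R(M,N)$. Both carry the induced $R$-action, so the equality is one of $R$-smodules.

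The main obstacle is the first step: writing the bimodule structure on $\shom_R(M,N)$ correctly so that it is simplicial. The right action must use $\sigma^*_A(b)$ acting on $x$, not $b$ itself. The key point in the converse is that the centralizer condition must be tested against all $\theta$, not only the identity, since the condition at $\theta=\sigma$ is what yields linearity at the simplex $\sigma$. Once the action is set up correctly, the rest is the routine unwinding above.
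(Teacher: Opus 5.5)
Your proof is correct and follows the paper's argument essentially step for step. You use the same $A^e$-action on $\shom_R(M,N)$, the same direct unwinding of $\theta^*$ for the inclusion $\shom_A(M,N)\subseteq \shom_R(M,N)^A$, and the same device for the reverse inclusion: take $\theta=\sigma$ and evaluate $\sigma^*(f)$ at $Id_{[k]}\otimes x$.
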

\begin{proof}
    Note that $\shom_R(M,N)$ is naturally an $(A,A)$-sbimodule by viewing both $M$ and $N$ as $(A,R)$-sbimodules.
    Thus, it is easy to verify that $\shom_R(M,N)$ is a left $A^e$-smodule with action given by
    \begin{align}
    \label{E3.15.1}
    \tag{E3.15.1}
       \big( (a\otimes b)\cdot f \big)(\sigma \otimes x) = \sigma^*(a)f\big( \sigma \otimes \sigma^*(b)x \big)
    \end{align}
    for any $a,b\in A_m$, $f\in \shom_R(M,N)_m$, $\sigma \in \Delta^m_k$, and $x\in M_k$.

    Given any $f\in (\shom_R(M,N)^A)_n$,
    for any $\sigma \in \Delta^n_k$, $x\in M_k$, and $a\in A_k,$
    \begin{align*}
        f(\sigma \otimes ax)=\sigma^*(f) (Id_{[k]}\otimes ax)&= ((1_{A_k}\otimes a)\cdot \sigma^*(f))(Id_{[k]}\otimes x) &&\text{by \eqref{E3.15.1}}\\
        &=((a\otimes 1_{A_k}) \cdot \sigma^*(f))(Id_{[k]}\otimes x) &&\text{as $f\in (\shom_R(M,N)^A)_n$}\\
        &=a\sigma^*(f)(Id_{[k]}\otimes x)= af(\sigma\otimes x)
    \end{align*}
    which implies $f$ respects left $A$-action and therefore lies in $\shom_A(M,N)_n$.
    
    Conversely,
    given any $g\in \shom_A(M,N)_n$,
    for any $\theta:[m]\to[n]$ in $\Delta$ and $a\in A_m$,
    \begin{align*}
        ((a\otimes 1_{A_m}) \cdot \theta^*(g))(\sigma \otimes x) &= \sigma^*(a) \theta^*(g)(\sigma \otimes x) &&\text{by \eqref{E3.15.1}} \\
        &=\sigma^*(a) g(\theta \sigma\otimes x) &&\text{by \eqref{E3.1.1}}\\
        &=g(\theta\sigma \otimes \sigma^*(a)x) = \theta^*(g)(\sigma \otimes \sigma^*(a)x)\\
        &= ((1_{A_m}\otimes a)\cdot \theta^*(g))(\sigma \otimes x)
    \end{align*}
    for $\sigma\in \Delta^m_k$ and $x\in M_k.$
    This shows $(a\otimes 1_{A_m}) \cdot \theta^*(g)=(1_{A_m}\otimes a)\cdot \theta^*(g).$
    Therefore, $g\in (\shom_{R}(M,N)^A)_n.$
    This finishes the proof.
\end{proof}

The following extends \cite[Proposition 1.3(1)]{NdBvO1989} to $R\sMod$-enriched setting.

\begin{proposition}
\label{yypro3.16}
    Let $A$ be an $R$-salgebra.
    Then $A$ is separable over $R$ if and only if the forgetful functor $\mathcal{U}=\mathcal{U}_{A/R}:A \sMod \to R \sMod$ is a separable $R\sMod$-enriched functor.
\end{proposition}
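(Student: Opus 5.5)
Both directions rest on Lemma \ref{yylem3.15}, which identifies $\shom_A(M,N)$ with the centralizer $\shom_R(M,N)^A$. Under this identification the enriched functor $\mathcal{U}_{M,N}$ is just the inclusion $\shom_R(M,N)^A \hookrightarrow \shom_R(M,N)$. A separable-functor retraction $H$ therefore amounts to a natural projection of $\shom_R(M,N)$ onto its centralizer.

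For ($\Rightarrow$), fix a separability idempotent $e\in (A^e)_0$ (Proposition \ref{yypro2.7}, Definition \ref{yydef2.8}). Define
\[
H_{M,N}:\shom_R(M,N)\to \shom_A(M,N),\qquad g\mapsto e\cdot g,
\]
using the left $A^e$-action \eqref{E3.15.1}. Concretely, if $s_0^k(e)=\sum_i a_i\otimes b_i$, then $(e\cdot g)(\sigma\otimes x)=\sum_i \sigma^*(a_i)\,g(\sigma\otimes \sigma^*(b_i)x)$.

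By the proof of Proposition \ref{yypro3.10}, in particular \eqref{E3.10.1}, $e\cdot g$ lies in $\shom_R(M,N)^A=\shom_A(M,N)$. The map $H$ is an $R$-smodule morphism because the $R$-action commutes with the $A^e$-action.

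Condition (1) holds because $e$ acts as the identity on the centralizer $eM$. Equivalently, for $f$ already $A$-linear,
\[
(e\cdot f)(\sigma\otimes x)=\sum_i \sigma^*(a_i)\sigma^*(b_i)\,f(\sigma\otimes x)=\mu(s_0^k(e))\,f(\sigma\otimes x)=f(\sigma\otimes x).
\]

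Condition (2) is the main computation. Suppose $\mathcal{U}(f)\circ g=g'\circ \mathcal{U}(f')$ with $f,f'$ $A$-linear $n$-morphisms. I would show that $f\circ(e\cdot g)=e\cdot(f\circ g)$ and $(e\cdot g')\circ f'=e\cdot(g'\circ f')$. Both follow from \eqref{E3.4.3} together with $A$-linearity of $f$ and $f'$: the factor $\sigma^*(a_i)$ passes through $f$, and the factor $\sigma^*(b_i)$ passes through $f'$. Combining these gives $f\circ H(g)=H(g')\circ f'$.

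The bookkeeping at this step is the main obstacle. The degeneracy $s_0^k(e)$ must be applied at the correct level $k$ of $\sigma\in\Delta^n_k$, and the composition \eqref{E3.4.3} evaluates $f$ at $\sigma\otimes g(\sigma\otimes x)$ at that same $\sigma$. One must check that the same component $s_0^k(e)$ appears on both sides, which holds because everything is evaluated at the same simplex.

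For ($\Leftarrow$), take $M=A$, viewed as a left $A$-smodule, and $N=A^e$, viewed as a left $A$-smodule via left multiplication on the first tensor factor. Let $\rho\in\Hom_{R\sMod}(A,A^e)$ be the $R$-linear $0$-morphism $a\mapsto a\otimes 1$, and set $\psi=H(\rho)\in\Hom_A(A,A^e)$.

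Applying condition (2) with right multiplications, which are $A$-linear, makes $\psi$ compatible with the right $A$-structures. Indeed, right multiplication by $b$ on $A$ and right multiplication by $1\otimes b$ on $A^e$ are left $A$-linear and intertwine $\rho$. This yields an $A^e$-linear map $A\to A^e$.

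Composing with $\mu$, which is left $A$-linear, and using $\mu\circ\rho=\mathrm{Id}_A$ together with Remark \ref{yyrem3.14}, gives $\mu\circ\psi=H(\mu\circ\rho)=H(\mathrm{Id})=\mathrm{Id}_A$. Hence $\mu$ splits as a map of $A^e$-smodules, and $A$ is separable by Proposition \ref{yypro2.7}(2).

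Alternatively, $e:=\psi(1_{A_0})$ should serve directly as a separability idempotent. The subtle point in this direction is ensuring that the right $A$-linearity obtained from (2) holds at all simplicial levels. For this it suffices to use $0$-morphisms of right multiplication by elements $s_0^k(b)$ together with the simplicial compatibility of $H$.
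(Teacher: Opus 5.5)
Your ($\Rightarrow$) direction is the paper's argument: $H(g)=s_0^m(e)\cdot g$, with condition (2) obtained from $f\circ H(g)=s_0^m(e)\cdot(\mathcal{U}(f)\circ g)$ and $H(g')\circ f'=s_0^m(e)\cdot(g'\circ\mathcal{U}(f'))$. Your direct check of condition (1) via $\mu(s_0^k(e))=1$ is also fine.

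The ($\Leftarrow$) direction breaks because you feed $H$ the wrong section of $\mu$. With $A^e$ a left $A$-smodule through the first factor, $\rho(a)=a\otimes 1$ is already left $A$-linear. So condition (1) forces $\psi=H(\rho)=\rho$. But $\rho$ is not right $A$-linear: $\rho(ab)=ab\otimes 1$, while $(1\otimes b)\rho(a)=a\otimes b$. These already differ for $A=M_2(\Bbbk)$, $a=1$, $b=E_{12}$. Hence the claimed intertwining is false, and $\psi(1_{A_0})=1\otimes 1$ is not a separability idempotent in general. The map fed into $H$ must have the structure that condition (2) preserves (right $A$-linearity) and lack the one that $H$ supplies (left $A$-linearity). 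The paper therefore uses $\jmath(x)=1\otimes x$, which satisfies $\jmath(xa)=(1\otimes a)\jmath(x)$ and $\mu\circ\jmath=\mathrm{Id}_A$. Then $H(\jmath)$ is left $A$-linear, stays right $A$-linear by condition (2), and satisfies $\mu\circ H(\jmath)=H(\mathrm{Id}_A)=\mathrm{Id}_A$ by Remark \ref{yyrem3.14}.

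Your closing remark on levels also needs correcting. Right multiplication by a non-degenerate $b\in A_k$ is not a simplicial map, so it is not a $0$-morphism. Using only $0$-morphisms of right multiplication by degenerate elements gives right linearity over the image of $A_0$ alone, and that is too weak. For $W^{\langle 0\rangle}/K$ over $K$ (Example \ref{yyexa4.9}), which is not separable, the map $a\mapsto a\otimes 1$ already has all of those properties. What is needed, for every $a\in A_n$, are the genuine $n$-morphisms $r_a\colon\sigma\otimes x\mapsto x\,\sigma^*(a)$ in $\shom_A(A,A)_n$ and $\rho_a\colon\sigma\otimes z\mapsto(1\otimes\sigma^*(a))z$ in $\shom_A(A^e,A^e)_n$. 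One checks $\mathcal{U}(\rho_a)\circ s_0^n(\jmath)=s_0^n(\jmath)\circ\mathcal{U}(r_a)$ and applies condition (2) of Definition \ref{yydef3.13} at level $n$. Since $H$ is simplicial, $H(s_0^n(\jmath))=s_0^n(H(\jmath))$. Evaluating at $\mathrm{Id}_{[n]}\otimes x$ then gives $H(\jmath)(xa)=(1\otimes a)H(\jmath)(x)$. This is the point where the enriched form of condition (2) is genuinely used.
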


\begin{proof}
    ($\implies$) Suppose $A$ is separable over $R.$ 
    Let $A^e= A \sotimes_R A^{o}.$
    Then there is a separability idempotent $e= \sum_{i=1}^t x_i \otimes y_i\in (A^e)_0$ satisfying Proposition \ref{yypro2.7}(3).
    By Lemma \ref{yylem3.15} and \eqref{E3.10.1},
    we obtain $R$-smodule equalities
    \begin{align}
    \label{E3.16.1}
    \tag{E3.16.1}
    \shom_A(M,N)=\shom_{R}(M,N)^A =e\shom_R(M,N).
    \end{align}
    Therefore, we may define a simplicial map \begin{align*}
        H: \shom_R(M,N) \to \shom_A(M,N) \ \ \  f \mapsto s_0^m(e)\cdot f
    \end{align*}
    for every $f\in \shom_R(M,N)_m$.
    Recall $\shom_R(M,N)$ and $\shom_A(M,N)$ are $R$-smodules as in Lemma \ref{yylem3.2}.
    Note the left $R$-actions on these two simplicial Hom-sets in \eqref{E3.2.1} coincide with the left action of $\eta_{A/R}(R)\otimes 1$ in \eqref{E3.15.1}.
    Since $e$ is a separability idempotent,
    it follows that $H$ is an $R$-smodule morphism 
    \begin{align*}
        H(r\cdot f)  &=  s_0^m(e)\cdot \big( (\eta_{A/R}(r) \otimes 1_{A_m}) \cdot f \big)  \\
        &= (\eta_{A/R}(r)\otimes 1_{A_m}) \cdot (s_0^m(e)\cdot f ) = r\cdot H(f).
    \end{align*}
    for every $r\in R_m$ and $f\in \shom_{R}(M,N)_m$.
    
    It remains to show $H$ satisfies conditions (1) and (2) of Definition \ref{yydef3.13}.
    
    We first show $H$ satisfies condition (1).
    Given any $h\in \shom_{A}(M,N)_m$,
    by \eqref{E3.16.1},
    write $h = s_0^m(e)\cdot \overline{h}$ for some $\overline{h}\in \shom_R(M,N)_m$.
    It follows that $$H(\mathcal{U}(h))=s_0^m(e)\cdot (s_0^m(e)\cdot \overline{h})= s_0^m(e)\cdot \overline{h} = h$$ as $s_0^m(e)$ is an idempotent.
    This shows condition (1) holds for $H.$

    Next we show $H$ satisfies condition (2).
    Suppose 
    $\mathcal{U}(f)\circ g = g'\circ \mathcal{U}(f')$ for left $A$-smodule $m$-morphisms $f, f'$ and $R$-smodule $m$-morphisms $g, g'$.
    Note that
    \begin{align*}
        (f\circ H(g))(\sigma \otimes x)
        &=f\big( \sigma \otimes (s_0^m(e)\cdot g)(\sigma \otimes x) \big) && \text{by \eqref{E3.4.3}} \\
        &=\sum_{i=1}^t f\big( \sigma \otimes s_0^k(x_i)g(\sigma \otimes s_0^k(y_i)x)\big) && \text{by \eqref{E3.15.1}}\\
        &=\sum_{i=1}^t s_0^k(x_i)f\big( \sigma \otimes g(\sigma \otimes s_0^k(y_i)x)\big) && \text{$f$ is a left $A$-smodule morphism}\\
        &=\sum_{i=1}^t s_0^k(x_i)(\mathcal{U}(f)\circ g)(\sigma \otimes s_0^k(y_i)x)\big) && \text{by \eqref{E3.4.3}} \\
        &=\big( s_0^m(e)\cdot (\mathcal{U}(f)\circ g) \big)(\sigma \otimes x) && \text{by \eqref{E3.15.1}}
    \end{align*}
    for every $\sigma \in \Delta^m_k$ and $x\in M_k$.
    This shows $f\circ H(g)=s_0^m(e)\cdot (\mathcal{U}(f) \circ g)$.
    By a similar computation,
    $H(g')\circ f' = s_0^m(e)\cdot (g'\circ  \mathcal{U}(f')).$
    Since 
    $\mathcal{U}(f)\circ g = g'\circ \mathcal{U}(f')$,
    condition (2) now follows from
    \begin{align*}
        f\circ H(g)= s_0^m(e)\cdot (\mathcal{U}(f) \circ g)=s_0^m(e)\cdot (g' \circ \mathcal{U}(f'))=H(g')\circ f'.
    \end{align*}
    Therefore,
    $\mathcal{U}$ is a separable $R\sMod$-enriched functor.

    ($\impliedby$) Suppose $\mathcal{U}$ is separable.
    Let $A^e = A \sotimes_R A^{o}.$
    Since the $R$-action is central in $A$ and $A^e$,
    there is an $R$-smodule morphism \begin{align*}
        \jmath: A \to A^{e}  \qquad x\mapsto 1_{A_n} \otimes x
    \end{align*} for every $x\in A_n$,
    which implies
    there is a left $A$-smodule morphism $H(\jmath): A \to A^e$.
    
    We claim $H(\jmath)$ is an $(A,A)$-sbimodule morphism.
    It suffices to show $H(\jmath)$ is a right $A$-smodule morphism.
    For every $a\in A_n$, define left $A$-smodule morphisms
    \begin{align*}
        &r_a \in \shom_A(A,A)_n && \sigma \otimes x \mapsto x \cdot \sigma^*(a) \\
        &\rho_a\in \shom_A(A^e, A^e)_n && \sigma \otimes z \mapsto (1_{A_k}\otimes \sigma^*(a))\cdot z
    \end{align*}
    for every $\sigma \in \Delta^n_k$, $x\in A_k$, and $z\in (A^e)_k$.
    Note
    $r_a$ represents the right multiplication by $a$ on $A$, 
    while $\rho_a$ represents the right multiplication by $a$ on $A^e.$
    
    For any $a\in A_n$, we claim that
    \begin{align}
    \label{E3.16.2}
    \tag{E3.16.2}
        \mathcal{U}(\rho_a) \circ s_0^n(\jmath) = s_0^n(\jmath) \circ \mathcal{U}(r_a).
    \end{align} 
    Indeed,
    for every $\sigma \in \Delta^n_k$ and $x\in A_k$,
    \begin{align*}
        \big(\mathcal{U}(\rho_a) \circ s_0^n(\jmath) \big)(\sigma \otimes x) &= \rho_a(\sigma \otimes 1_{A_k}\otimes x) 
        =(1_{A_k}\otimes \sigma^*(a))(1_{A_k}\otimes x)\\
        &=1_{A_k}\otimes x\sigma^*(a)
        =1_{A_k} \otimes r_a(\sigma \otimes x)
        =\big( s_0^n(\jmath) \circ \mathcal{U}(r_a) \big)(\sigma \otimes x).
    \end{align*}
    This proves \eqref{E3.16.2}.
    By condition (2) of $H$,
    \eqref{E3.16.2} implies
    \begin{align}
    \label{E3.16.3}
    \tag{E3.16.3}
        \rho_a \circ H(s_0^n(\jmath)) = H(s_0^n(\jmath)) \circ r_a.
    \end{align}
    Evaluating \eqref{E3.16.3} at $Id_{[n]} \otimes x$ for any $x\in A_n$ (also note $H(s_0^n(\jmath))=s_0^n(H(\jmath))$),
    \begin{align*}
        (1_{A_n}\otimes a)\cdot H(\jmath)(x)=H(\jmath)(xa).
    \end{align*}
    Therefore, $H(\jmath)$ is a right $A$-smodule morphism.
    It follows that $H(\jmath)$ is an $(A,A)$-sbimodule morphism, and equivalently, a left $A^e$-smodule morphism.
    
    Last, we note $H(\jmath)$ splits $\mu_{A/R}: A^e \to A$ since, by Remark \ref{yyrem3.14}(1,2),
    \begin{align*}
        \mu_{A/R} \circ H(\jmath) = H\big( \mathcal{U}(\mu)\circ \jmath \big) = H(Id_A)= Id_{A}.
    \end{align*}
    Then $A$ is separable over $R$ by Proposition \ref{yypro2.7}(2).
\end{proof}

\section{More Properties on Separability}
\label{yysec4}

We collect basic properties of separable simplicial algebras,
many of which have apparent analogues in the classical theory of separable algebras \cite[Chapter 4]{Ford2017}.

\subsection{Invariance under base change, tensor product, and quotient}
\label{yysec4.1}

\begin{lemma}
\label{yylem4.1}
    Let $R$ be a commutative sring and $S$ be a commutative $R$-salgebra.
    If $A$ is a separable $R$-salgebra,
    then $S \sotimes_R A$ is a separable $S$-salgebra.
\end{lemma}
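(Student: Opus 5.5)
The plan is to push a separability idempotent forward along the base change map. Let $A$ be separable over $R$. Proposition \ref{yypro2.7}(3) gives a separability idempotent $e=\sum_{i=1}^t x_i\otimes y_i\in (A^e)_0=A_0\otimes_{R_0}A_0^{o}$. Write $B:=S\sotimes_R A$. This is an $S$-salgebra, since $S$ is commutative and $\eta_{B/S}(s)=s\otimes 1$ lands levelwise in the center, and simplicial structure maps preserve this. Compatibility with the structure maps gives $\eta_{B/S}(S)\subseteq Z(B)$.

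\textbf{Step 1 (identify the enveloping algebra).} We have a levelwise isomorphism of $S$-salgebras
$$B^e=B\sotimes_S B^{o}\cong S\sotimes_R A^e,$$
given at level $n$ by the classical isomorphism
$$(S_n\otimes_{R_n}A_n)\otimes_{S_n}(S_n\otimes_{R_n}A_n)^{o}\cong S_n\otimes_{R_n}(A_n\otimes_{R_n}A_n^{o}).$$
This isomorphism commutes with the structure maps $\theta^*$ because both sides apply $\theta^*$ factorwise. Under it, $\mu_{B/S}$ corresponds to $Id_S\sotimes_R\mu_{A/R}$.

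\textbf{Step 2 (candidate idempotent).} Set
$$\tilde e:=\sum_i (1_{S_0}\otimes x_i)\otimes(1_{S_0}\otimes y_i)\in (B^e)_0.$$
Then $\mu_{B/S}(\tilde e)=1_{S_0}\otimes\mu_{A/R}(e)=1_{B_0}$.

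\textbf{Step 3 (the separability condition).} We must check that $J_{B/S}\tilde e=0$, or equivalently, by Definition \ref{yydef2.8}, that $(b\otimes 1)s_0^n(\tilde e)=(1\otimes b)s_0^n(\tilde e)$ for all $b\in B_n$. Note that $s_0^n(\tilde e)=\sum_i(1\otimes s_0^n(x_i))\otimes(1\otimes s_0^n(y_i))$. Both sides of the desired identity are additive in $b$, so it suffices to take $b=s\otimes a$ with $s\in S_n$ and $a\in A_n$. Factor $b=(s\otimes 1)(1\otimes a)$. The factor $s\otimes 1$ lies in the image of $S_n$, and $S_n$ acts identically on both tensor factors of $B^e_n$ because the tensor product is over $S_n$. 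This reduces the check to $b=1\otimes a$. For such $b$, the identity is the image of the relation $(a\otimes1)s_0^n(e)=(1\otimes a)s_0^n(e)$ in $A_n^e$ under the ring map $A_n^e\to B_n^e$.

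Alternatively, one can argue more conceptually. The $A^e$-splitting $\sigma:A\to A^e$ of $\mu_{A/R}$ base-changes to $Id_S\sotimes_R\sigma$. This is a $B^e$-linear splitting of $Id_S\sotimes_R\mu_{A/R}$, which is identified with $\mu_{B/S}$ in Step 1. Proposition \ref{yypro2.7}(2) then applies directly.

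The main point requiring care is Step 1, namely that the levelwise classical isomorphisms assemble into a simplicial isomorphism respecting the bimodule structures. This is routine, since everything is defined levelwise with factorwise structure maps.
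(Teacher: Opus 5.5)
Your proof is correct. Your primary route differs from the paper's, while your ``alternative'' is exactly the paper's proof. The paper identifies $T^e=(S\sotimes_R A)\sotimes_S(S\sotimes_R A)^{o}$ with $S\sotimes_R A^e$ and transports the $T^e$-action on $T$ to an $S\sotimes_R A^e$-action. Under this identification $\mu_{T/S}$ becomes $Id_S\sotimes_R\mu_{A/R}$, so tensoring an $A^e$-linear section $h$ of $\mu_{A/R}$ with $Id_S$ gives a section, and Proposition \ref{yypro2.7}(2) finishes.

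Your main route instead pushes the separability idempotent forward along the ring map $A^e\to B^e$, $x\otimes y\mapsto(1\otimes x)\otimes(1\otimes y)$. This map is well defined levelwise because $R_n$ acts through $S_n$, and it commutes with $s_0^n$. You then verify the identity of Definition \ref{yydef2.8} by hand, using that $s\otimes 1$ can be moved across $\otimes_{S_n}$ and is central. Your reduction from $J_{B/S}\tilde e=0$ to that identity is justified by the description of $(J_{B/S})_n$ in Definition \ref{yydef2.6} as the left ideal generated by the elements $b\otimes 1-1\otimes b$.

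This route never uses the full isomorphism of Step 1. It needs only the ring map above and the direct computation $\mu_{B/S}(\tilde e)=1\otimes\sum_i x_iy_i=1_{B_0}$. So your closing remark that Step 1 is the point requiring care applies only to the conceptual version.

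Comparing the two: your route produces an explicit separability idempotent for $S\sotimes_R A$ and avoids transporting module structures along an identification of enveloping algebras. The paper's route is shorter and functorial, and it is reused essentially verbatim for Lemma \ref{yylem4.2}, where one tensors two sections $h_A\sotimes_R h_B$.
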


\begin{proof}
    Let $T= S \sotimes_R A$. 
    Note there is a canonical sring isomorphism
    \begin{align*}
        T^e = (S \sotimes_R A)\sotimes_S (S \sotimes_R A)^{o}  \cong S\sotimes_R A^e 
    \end{align*} 
    where for every $s_0, s_1\in S_n$ and $a_0, a_1\in A_n$, 
    \begin{align*}
        s_0 \otimes_R a_0 \otimes_S s_1 \otimes_R a_1 \mapsto s_0s_1 \otimes_R a_0 \otimes_R a_1.
    \end{align*}
    Therefore,
    the left $T^e$-action on $T$
    is identified with
    a left $S \sotimes_R A^e$-action on $T$ 
    \begin{align*}
        (s\otimes_R a_0 \otimes_R a_1)\cdot (x \otimes_R a) = sx \otimes_R a_0aa_1
    \end{align*}
    for $s, x\in S_n$ and $a, a_0, a_1\in A_n$ and the left $T^e$-smodule morphism $\mu_{T/S} : T^e \to T$
    is identified 
    as left $S \sotimes_R A^e$-smodule morphism $\tilde{\mu}: S\sotimes_R A^e \to T$ given by
    \begin{align*}
        s \otimes a_0 \otimes a_1 \mapsto s \otimes a_0a_1.
    \end{align*}
    It follows that $\tilde{\mu}= Id_S \sotimes_R \mu_{A/R}.$
    Since $A$ is separable over $R,$
    $\mu_{A/R}$ admits  a left $A^e$-smodule section $h.$
    Then the left $S \sotimes_R A^e$-smodule morphism $Id_S \sotimes_R h$ splits $\tilde{\mu}$.
    Therefore, $\mu_{T/S}$ splits.
    By Proposition \ref{yypro2.7}(2),
    $T$ is separable over $S.$ 
\end{proof}

\begin{lemma}
\label{yylem4.2}
Let $R$ be a commutative sring 
and $S, T$ be commutative 
$R$-salgebras.
Suppose $A$ is a separable 
$S$-salgebra and $B$ is a 
separable $T$-salgebra.
Then $A\sotimes_R B$ is separable over $S\sotimes_R T.$
\end{lemma}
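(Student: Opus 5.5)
The quickest route is through the idempotent criterion of Proposition \ref{yypro2.7}(3). Put $C = S\sotimes_R T$ and $D = A\sotimes_R B$. Since $S$ maps into $Z(A)$ and $T$ maps into $Z(B)$, levelwise $S_n\otimes_{R_n} T_n$ lands in the center of $A_n\otimes_{R_n} B_n$. This centrality passes through all structure maps, so $D$ is a $C$-salgebra.

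The first step is to identify the enveloping algebra. I would construct the canonical sring morphism
\begin{align*}
\Phi: A^e \sotimes_R B^e = (A\sotimes_S A^{o})\sotimes_R (B\sotimes_T B^{o}) \longrightarrow D^e = D\sotimes_C D^{o},
\end{align*}
given levelwise by $a\otimes a'\otimes b\otimes b' \mapsto (a\otimes b)\otimes (a'\otimes b')$. This is well defined: moving an element of $S_n$ across $\otimes_{S_n}$ on the left corresponds to moving $s\otimes 1\in C_n$ across $\otimes_{C_n}$ on the right, and similarly for $T$. It is compatible with the simplicial structure maps because everything is defined levelwise. In fact it is an isomorphism levelwise, exactly as in the classical case, but only that it is a ring homomorphism will be needed. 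Moreover $\mu_{D/C}\circ \Phi$ sends $a\otimes a'\otimes b\otimes b'$ to $aa'\otimes bb'$, so
\begin{align*}
\mu_{D/C}\circ\Phi = \mu_{A/S}\otimes\mu_{B/T}.
\end{align*}

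The second step builds the idempotent. Take separability idempotents $e_A\in (A^e)_0$ and $e_B\in (B^e)_0$, and set $e = \Phi(e_A\otimes e_B)\in (D^e)_0$. By the identity above, $\mu_{D/C}(e) = \mu_{A/S}(e_A)\otimes\mu_{B/T}(e_B) = 1_{D_0}$.

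It remains to check $(J_{D/C})_n s_0^n(e) = 0$, which is where some care is needed. By Definition \ref{yydef2.6}, $(J_{D/C})_n$ is generated as a left ideal by the elements $d\otimes 1 - 1\otimes d$ for $d\in D_n$. Such $d$ are sums of simple tensors $a\otimes b$, and the expression is additive in $d$, so it suffices to treat $d = a\otimes b$. Write
\begin{align*}
(a\otimes b)\otimes 1 - 1\otimes (a\otimes b)
= \bigl((a\otimes 1)\otimes 1 - 1\otimes(a\otimes 1)\bigr)\bigl((1\otimes b)\otimes 1\bigr)
+ \bigl(1\otimes(a\otimes 1)\bigr)\bigl((1\otimes b)\otimes 1 - 1\otimes(1\otimes b)\bigr).
\end{align*}
The product order is harmless, because the $A$-factors and the $B$-factors commute inside $D^e$. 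Each bracketed difference is $\Phi$ of an element of the form $(J_{A/S})_n\otimes 1$ or $1\otimes (J_{B/T})_n$.

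Next I use that $s_0^n$ commutes with $\Phi$ and with the tensor factorization, so $s_0^n(e) = \Phi\bigl(s_0^n(e_A)\otimes s_0^n(e_B)\bigr)$. Since $\Phi$ is multiplicative, multiplying by this element kills each difference: $(J_{A/S})_n s_0^n(e_A) = 0$ and $(J_{B/T})_n s_0^n(e_B) = 0$. Here the generators of $J$ commute past the other tensor factor, because the factors of $A^e\sotimes_R B^e$ commute, and the extra left factors are absorbed since $J_{D/C}$ is a left ideal. Hence $J_{D/C}\,e = 0$, and Proposition \ref{yypro2.7} gives that $D$ is separable over $C$.

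Alternatively, one could apply Lemma \ref{yylem4.1} twice and then use transitivity. The direct computation above avoids needing a transitivity result not yet proved in the paper, so I would present that one.
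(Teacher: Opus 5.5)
Your argument is correct, and it is the paper's construction in a different form. The paper verifies criterion (2) of Proposition \ref{yypro2.7}: it identifies $D^e$ with $A^e\sotimes_R B^e$ and $\mu_{D/C}$ with $\mu_{A/S}\sotimes_R\mu_{B/T}$, and then observes that $h_A\sotimes_R h_B$ is a section. You verify criterion (3) directly instead. Your $e=\Phi(e_A\otimes e_B)$ is exactly the image of $1_{D_0}$ under that section when $h_A(1_{A_0})=e_A$ and $h_B(1_{B_0})=e_B$.

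The paper's route is shorter and never touches $J_{D/C}$. However, it relies on the canonical isomorphism $D^e\cong A^e\sotimes_R B^e$, which it states but does not check, to turn an $A^e\sotimes_R B^e$-linear section into a $D^e$-linear one. Yours needs only that $\Phi$ is a ring morphism with $\mu_{D/C}\circ\Phi=\mu_{A/S}\otimes\mu_{B/T}$. The price is the explicit decomposition of $(a\otimes b)\otimes 1-1\otimes(a\otimes b)$ and the commutation of the $A$- and $B$-factors, both of which you handle correctly.

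One caveat on your closing aside. The transitivity result in the paper, Proposition \ref{yypro5.10}(1), requires a commutative intermediate base. So ``Lemma \ref{yylem4.1} twice plus transitivity'' only works directly when $A$ or $B$ is commutative. Since your proof does not use that route, nothing is lost.
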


\begin{proof}
Let $A^e=A \sotimes_S A^{o}$ 
and $B^e=B \sotimes_T B^{o}$.
Let $D= A \sotimes_R B$ and 
$K=S \sotimes_R T.$
Let $D^e= D \sotimes_K D^{o}.$
There is a canonical sring 
isomorphism
\begin{align*}
D^e = (A\sotimes_{R} B )\sotimes_{K} (A\sotimes_{R} B)^{o}  
\cong A^e \sotimes_R B^e
\end{align*}
where for every $a_0, a_1\in A_n$ and $b_0, b_1\in B_n$,
    \begin{align*}
        a_0 \otimes_R b_0 \otimes_{K} a_1 \otimes_R b_1 \mapsto a_0\otimes_S a_1 \otimes_R b_0 \otimes_T b_1
    \end{align*}
    The left $D^e$-action on $D$ can be identified with the following left $A^e\sotimes_R B^e$-action
    \begin{align*}
        (a_0\otimes_S a_1 \otimes_R b_0 \otimes_T b_1)\cdot (a\otimes_R b) = a_0aa_1\otimes_R b_0bb_1
    \end{align*}
    for any $a,a_0, a_1\in A_n$ and $b, b_0, b_1 \in B_n.$
    Therefore,
    the left $D^e$-smodule morphism $\mu_{D/K}: D^e \to D$  can be identified with $\tilde{\mu}: A^e \sotimes_R B^e \to A \sotimes_R B$ by
    \begin{align*}
       a_0\otimes_S a_1 \otimes_R b_0 \otimes_T b_1 \mapsto a_0a_1 \otimes_R b_0b_1.
    \end{align*}
    Clearly $\tilde{\mu}= \mu_{A/S} \sotimes_R \mu_{B/T}$.
    Since $A$ is separable over $S$ and $B$ is separable over $T$,
    by Proposition \ref{yypro2.7}(2),
    $\mu_{A/S}$ admits a left $A^e$-smodule section $h_A$ and  $\mu_{B/T}$ admits a left $B^e$-smodule section $h_B$.
    So $h_A \sotimes_R h_B$ is a left $A^e\sotimes_R B^e$-smodule section of $\tilde{\mu}.$
    Hence $\mu_{D/K}$ also splits, 
    and 
    by Proposition \ref{yypro2.7},
    $D$ is separable over $K.$ 
\end{proof}

\begin{remark}
\label{yyrem4.3}
        Let $A$ be a separable $R$-salgebra and $e = \sum_{i=1}^t x_i \otimes y_i$  be a separability idempotent.
        Then
        \begin{enumerate}
            
            \item[(1)] $A^o$ is separable over $R$ because $e = \sum_{i=1}^t y_i \otimes x_i$ is a separability idempotent.
            Consequently, $A$ is separable over $R$ if and only if $A^{o}$ is separable over $R.$

            \item[(2)] 
            Then by Lemma \ref{yylem4.2},
            $A^e=A \sotimes_R A^{o}$ is separable over $R.$
        \end{enumerate}
    \end{remark}

\begin{lemma}
\label{yylem4.4}
    Let $R$ be a commutative sring and $A$ be a separable $R$-salgebra.
    Suppose $I$ is an sideal of $A$.
    Then $A/I$ is separable over $R$.
\end{lemma}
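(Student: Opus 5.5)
The plan is to push a separability idempotent of $A$ down to $A/I$ and then verify the criterion of Proposition \ref{yypro2.7}(3). Write $B=A/I$ and let $\pi: A\to B$ be the quotient. Levelwise, $\pi_n$ is a surjective $R_n$-algebra map. Since $I$ is a (two-sided) sideal, $B$ is an $R$-salgebra and $\pi$ is an $R$-salgebra morphism. The structure maps of $B$ are induced from those of $A$, because $\theta^*_A(I_n)\subseteq I_m$.

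Next, form the sring morphism
\[
\pi\otimes\pi^{o}: A^e=A\sotimes_R A^{o}\to B^e=B\sotimes_R B^{o}.
\]
It is levelwise surjective and commutes with the simplicial structure maps, since the tensor product structure maps are defined factorwise (Definition \ref{yydef1.7}). It is also compatible with multiplication: $\mu_{B/R}\circ(\pi\otimes\pi^{o})=\pi\circ\mu_{A/R}$, because both sides send $a\otimes b$ to $\pi(ab)$.

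Now let $e\in (A^e)_0$ be a separability idempotent and set $\bar e=(\pi\otimes\pi^{o})(e)\in (B^e)_0$. Two checks are needed.
\begin{enumerate}
\item[(i)] The unit condition holds: $\mu_{B/R}(\bar e)=\pi(\mu_{A/R}(e))=\pi(1_{A_0})=1_{B_0}$.
\item[(ii)] $J_{B/R}\bar e=0$. For level $n$, recall $(J_{B/R})_n=\sum_{b\in B_n}(B^e)_n(b\otimes 1-1\otimes b)$. Every $b\in B_n$ has the form $\pi_n(a)$ for some $a\in A_n$, and $(\pi\otimes\pi^{o})_n$ is a surjective ring map. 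Together these show that $(\pi\otimes\pi^o)_n$ maps $(J_{A/R})_n$ onto $(J_{B/R})_n$. Since $s_0^n$ commutes with $\pi\otimes\pi^{o}$, we have $s_0^n(\bar e)=(\pi\otimes\pi^o)(s_0^n(e))$. Hence
\[
(J_{B/R})_n s_0^n(\bar e)=(\pi\otimes\pi^o)\big((J_{A/R})_n s_0^n(e)\big)=0.
\]
\end{enumerate}
Proposition \ref{yypro2.7}(3) then gives that $B$ is separable over $R$.

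The only step requiring care is (ii), specifically the surjectivity of $(J_{A/R})_n\to(J_{B/R})_n$. It follows from the explicit generator description of $J$ in Definition \ref{yydef2.6} together with levelwise surjectivity of $\pi\otimes\pi^o$. That surjectivity in turn holds because the tensor product of surjections is surjective at each level.
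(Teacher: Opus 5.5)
Your proof is correct, but it takes a different route from the paper's. The paper argues through Proposition \ref{yypro3.10}. A left $(A/I)\sotimes_R (A/I)^{o}$-smodule $M$ becomes a left $A^e$-smodule by restriction along $A^e\to (A/I)^e$, and then $M^A=M^{A/I}$. So $(-)^{A/I}$ is an exact restriction functor followed by the exact functor $(-)^A$, hence exact.

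You instead stay at the level of Proposition \ref{yypro2.7}(3) and push the separability idempotent forward, $\bar e=(\pi\otimes\pi^{o})(e)$. Both proofs rest on the same fact: every element of $B_n$ lifts along $\pi_n$, so the commutator conditions for $A/I$ are images of those for $A$. In the paper this is hidden in the identification $M^A=M^{A/I}$. You make it explicit as $(\pi\otimes\pi^{o})_n\big((J_{A/R})_n\big)=(J_{B/R})_n$.

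What your version buys:
\begin{enumerate}
\item[(1)] It avoids the centralizer machinery entirely.
\item[(2)] It produces an explicit separability idempotent for $A/I$.
\item[(3)] It works verbatim for any surjective $R$-salgebra morphism $A\to B$.
\end{enumerate}
The paper's version is shorter once Proposition \ref{yypro3.10} is available, and it fits that section's theme of detecting separability through exactness of $(-)^A$.

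A small simplification is available to you. Apply $\pi\otimes\pi^{o}$ directly to the identity $(a\otimes 1)s_0^n(e)=(1\otimes a)s_0^n(e)$ from Definition \ref{yydef2.8}. Surjectivity of $\pi_n$ then gives the identity for every $b\in B_n$. This removes the need for the surjectivity of $(A^e)_n\to (B^e)_n$ and for the explicit description of $J_{B/R}$.
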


\begin{proof}
    Let $M$ be a left $(A/I) \otimes_R (A/I)^{o}$-smodule.
    By pulling back along the quotient map $A \twoheadrightarrow A/I$,
    $M$ is a left $A \otimes_R A^{o}$-smodule.
    Since the $A/I$-action on $M$ agrees with the $A$-action on $M$,
    $M^A = M^{A/I}$ as $R$-smodules.
    Since $A$ is separable over $R$,
    by Proposition \ref{yypro3.10},
    $(-)^A$ is an exact functor.
    Therefore, the functor $(-)^{A/I}$ is also exact which implies $A/I$ is separable over $R$
    by Proposition \ref{yypro3.10}.\qedhere
\end{proof}

\subsection{Simplicial center is computed levelwise}
\label{yysec4.2}
Let $R$ be a commutative sring.

\begin{lemma}
\label{yylem4.5}
Let $A$ be a separable $R$-salgebra. 
Then there is an $R$-smodule map $P:A \to A$ such that for every $n\geq 0$,
$${\rm im}(P_n) = Z(A_n) \text{ and } P_n(a)= a$$ for all $a\in Z(A_n).$
\end{lemma}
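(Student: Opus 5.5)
Let $e=\sum_{i=1}^t x_i\otimes y_i\in (A^e)_0$ be a separability idempotent, which exists by Proposition \ref{yypro2.7}. The plan is to take for $P$ the action of $e$ on $A$ viewed as a left $A^e$-smodule. Concretely, for $a\in A_n$ we set
\begin{align*}
P_n(a):= s_0^n(e)\cdot_A a=\sum_{i=1}^t s_0^n(x_i)\, a\, s_0^n(y_i).
\end{align*}

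\emph{$P$ is an $R$-smodule map.} For $\theta:[m]\to[n]$ we have $\theta^*_A s_0^n = s_0^m$, because $s_0^n$ is induced by $[n]\to[0]$ and $[m]\to[n]\to[0]$ is the unique map to $[0]$. Since $\theta^*_A$ is a ring map, this gives $\theta^*_A(P_n(a))=P_m(\theta^*_A(a))$, so $P$ is simplicial. It is $R_n$-linear levelwise because $R$ acts centrally on $A$.

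\emph{$P_n$ fixes $Z(A_n)$.} For $a\in Z(A_n)$ we compute $P_n(a)=\sum s_0^n(x_i)s_0^n(y_i)\,a = s_0^n(\mu_{A/R}(e))\,a=a$, using $\mu_{A/R}(e)=1_{A_0}$.

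\emph{The image of $P_n$ lies in $Z(A_n)$.} By Remark \ref{yyrem2.9}(3), $s_0^n(e)$ is a separability idempotent of the ordinary $R_n$-algebra $A_n$, so $(b\otimes 1-1\otimes b)s_0^n(e)=0$ for every $b\in A_n$. Acting on $a$ gives $bP_n(a)=P_n(a)b$. Hence $\operatorname{im}(P_n)\subseteq Z(A_n)$, and together with the previous step $\operatorname{im}(P_n)=Z(A_n)$.

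There is little genuine obstacle here. The only point needing care is the simpliciality of $P$, which is the compatibility $\theta^* s_0^n=s_0^m$ already used in Lemma \ref{yylem2.1}. Note that $P$ lands levelwise in $Z(A_n)$, not a priori in $Z(A)_n$. Consequently, since $P$ is simplicial and $P(A)_n=Z(A_n)$, the family $\{Z(A_n)\}$ is stable under the structure maps. This is presumably how the paper will deduce that $Z(A)_n=Z(A_n)$ for separable $A$, which is the point of this subsection.
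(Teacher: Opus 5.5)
Your proof is correct and follows the paper's: you use the same $P=\varphi(e)$, $a\mapsto s_0^n(e)\cdot a$, and the same computation $P_n(a)=s_0^n(\mu_{A/R}(e))a=a$ for $a\in Z(A_n)$. The only difference is how the image is identified: the paper quotes \eqref{E3.10.1} and Remark~\ref{yyrem3.8}(2) to get ${\rm im}(P)=eA=A^A=Z(A)$, which lands directly in the simplicial center and is what Proposition~\ref{yypro4.6} uses, whereas you verify ${\rm im}(P_n)\subseteq Z(A_n)$ levelwise from $s_0^n(e)$ and then recover stability of $\{Z(A_n)\}$ under the structure maps from the simpliciality of $P$; this is an equally valid route that proves the statement exactly as written.
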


\begin{proof}
    Let $e= \sum_{i=1}^t {x_i\otimes y_i} \in (A^e)_0$ be a separability idempotent such that $\mu_{A/R}(e)=1_{A_0}$ and
    $(b\otimes 1_{A_k})s_0^k(e) = (1_{A_k} \otimes b)s_0^k(e)$
    for every $b\in A_k.$
    
    Because $A$ is a left $A^e$-smodule,
    there exists an $R$-salgebra morphism $$\varphi: A^e \to \shom_R(A,A)$$ given by the left $A^e$-action.
    Define
    $P:=\varphi(e)$ which maps $ a \mapsto s_0^n(e)\cdot a$ for all $a\in A_n.$
    Then by \eqref{E3.10.1} and Remark \ref{yyrem3.8}(2),
    $${\rm im}(P) = eA = A^A= Z(A).$$
    Moreover, if $a\in Z(A_n)$,
    then $$
        a = \mu_{A/R}(s_0^n(e))a= \sum_{i=1}^t s_0^n(x_i) a s_0^n(y_i) = s_0^n(e) \cdot a = P(a). $$
        This completes the proof.
\end{proof}

\begin{proposition}
[Level-wise center lemma]
\label{yypro4.6}
Let $A$ be a separable $R$-salgebra. 
Then for every $n\geq 0$,
$$Z(A)_n=Z(A_n).$$
\end{proposition}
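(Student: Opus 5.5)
The inclusion $Z(A)_n\subseteq Z(A_n)$ holds by Definition \ref{yydef1.5}: take $\theta=Id_{[n]}$. So the work is the reverse inclusion $Z(A_n)\subseteq Z(A)_n$. We must show that if $a\in A_n$ is central in $A_n$, then $\theta^*_A(a)$ is central in $A_k$ for every $\theta:[k]\to[n]$. We will not check this via Remark \ref{yyrem3.8}(1) directly. Instead we use the projection $P$ of Lemma \ref{yylem4.5}, together with the identification ${\rm im}(P)=eA=A^A=Z(A)$ recorded in its proof, which comes from \eqref{E3.10.1} and Remark \ref{yyrem3.8}(2).

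Fix a separability idempotent $e=\sum_{i=1}^t x_i\otimes y_i\in (A^e)_0$. By Lemma \ref{yylem4.5}, $P_n(a)=s_0^n(e)\cdot a=\sum_i s_0^n(x_i)\,a\,s_0^n(y_i)$. By \eqref{E3.10.1}, applied to $M=A$, this element lies in $(eA)_n=(A^A)_n=Z(A)_n$.

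Now take $a\in Z(A_n)$. Since $a$ commutes with each $s_0^n(x_i)$, we get
\begin{align*}
P_n(a)=\sum_i s_0^n(x_i)s_0^n(y_i)\,a=\mu_{A/R}(s_0^n(e))\,a=s_0^n(\mu_{A/R}(e))\,a=a,
\end{align*}
as also stated in Lemma \ref{yylem4.5}. Hence $a=P_n(a)\in Z(A)_n$, and $Z(A_n)\subseteq Z(A)_n$.

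The one point that needs care is that ${\rm im}(P_n)$ is really the simplicial center $Z(A)_n$, not merely $Z(A_n)$. Lemma \ref{yylem4.5} is phrased with $Z(A_n)$, which is weaker than what we need. So I would go back to \eqref{E3.10.1}: there $M^A=eM$ was shown levelwise, with $M^A$ defined simplicially through all $\theta^*$ as in \eqref{E3.7.1}. We also need $A^A=Z(A)$ in the simplicial sense of Definition \ref{yydef1.5}, which is Remark \ref{yyrem3.8}(2).

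To make this step transparent, I would also verify it directly. For $\theta:[k]\to[n]$ we have $\theta^*_A(s_0^n(e)\cdot a)=s_0^k(e)\cdot\theta^*_A(a)$, because $\theta^*s_0^n=s_0^k$. For any $b\in A_k$, the defining identity $(b\otimes 1)s_0^k(e)=(1\otimes b)s_0^k(e)$ gives $b\,(s_0^k(e)\cdot c)=(s_0^k(e)\cdot c)\,b$ for every $c\in A_k$. Taking $c=\theta^*_A(a)$ shows that $\theta^*_A(P_n(a))$ is central in $A_k$, so $P_n(a)\in Z(A)_n$. This completes the proof.
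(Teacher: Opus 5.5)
Your proof is correct and follows the paper's argument, which also writes $a=P(a)\in{\rm im}(P)=Z(A)_n$ using the projection $P$ of Lemma \ref{yylem4.5} and the identification ${\rm im}(P)=eA=A^A=Z(A)$ from that lemma's proof. Your direct check that $\theta^*_A\big(s_0^n(e)\cdot a\big)=s_0^k(e)\cdot\theta^*_A(a)$ is central in $A_k$ makes explicit what the statement of Lemma \ref{yylem4.5} leaves implicit: the image of $P$ is the simplicial center, not merely $Z(A_n)$.
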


\begin{proof}
    By definition,
    $Z(A)_n\subseteq Z(A_n)$.
    The converse inclusion also holds because,
    for any $a\in Z(A_n)$, by Lemma \ref{yylem4.5},
    $a = P(a) \in {\rm im}(P)= Z(A)_n.$
\end{proof}


\begin{corollary}
\label{yycor4.7}
    Let $A$ and $B$ be separable $R$-salgebras.
    Then $$Z(A\sotimes_R B) = Z(A)\sotimes_R Z(B).$$
\end{corollary}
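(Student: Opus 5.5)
By Proposition \ref{yypro4.6}, both sides can be computed levelwise. Lemma \ref{yylem4.2} with $S=T=R$ shows that $A\sotimes_R B$ is separable over $R\sotimes_R R\cong R$, so Proposition \ref{yypro4.6} applies to $A\sotimes_R B$ as well. For every $n\geq 0$ this gives
\begin{align*}
Z(A\sotimes_R B)_n = Z(A_n\otimes_{R_n} B_n), \qquad Z(A)_n=Z(A_n), \qquad Z(B)_n=Z(B_n).
\end{align*}
The statement therefore reduces to a classical levelwise identity $Z(A_n\otimes_{R_n}B_n)=Z(A_n)\otimes_{R_n}Z(B_n)$ for the ordinary separable $R_n$-algebras $A_n,B_n$ (Remark \ref{yyrem2.9}(3)). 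Here the right-hand side is understood as the image of $Z(A_n)\otimes_{R_n}Z(B_n)$ in $A_n\otimes_{R_n}B_n$. The simplicial structure maps agree on both sides, since both are simplicial subsets of $A\sotimes_R B$, so equality of the levelwise subsets suffices.

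For the classical identity I would use the projections of Lemma \ref{yylem4.5}. Let $e_A, e_B$ be separability idempotents. Let $P^A=\varphi(e_A)$ and $P^B=\varphi(e_B)$ be the $R$-smodule maps $A\to A$ and $B\to B$ with images $Z(A)$ and $Z(B)$ that fix central elements. The element $e_A\otimes e_B\in (A^e\sotimes_R B^e)_0\cong (D^e)_0$, where $D=A\sotimes_R B$, is a separability idempotent for $D$ by the proof of Lemma \ref{yylem4.2}. Its associated projection $P^D$ equals $P^A\sotimes_R P^B$, because the action of $a_0\otimes a_1\otimes b_0\otimes b_1$ on $a\otimes b$ is $a_0aa_1\otimes b_0bb_1$.

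By Lemma \ref{yylem4.5} applied to $D$, we get ${\rm im}(P^D_n)=Z(D_n)$. On the other hand, ${\rm im}(P^A_n\otimes P^B_n)$ is the image of ${\rm im}(P^A_n)\otimes {\rm im}(P^B_n)=Z(A_n)\otimes Z(B_n)$, so the two subsets coincide.

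The main subtlety is the meaning of $Z(A)\sotimes_R Z(B)$: the natural map $Z(A)\sotimes_R Z(B)\to A\sotimes_R B$ should be injective for the equality to hold literally. I expect this to be the main obstacle. To handle it, I would note that $Z(A_n)$ is an $R_n$-module direct summand of $A_n$, split by $P^A_n$, since $P^A_n$ is an idempotent $R_n$-linear map with image $Z(A_n)$. The same holds for $Z(B_n)$. Tensor products of split inclusions are split inclusions, so $Z(A_n)\otimes_{R_n}Z(B_n)\to A_n\otimes_{R_n}B_n$ is injective. Moreover, these splittings are compatible with the simplicial structure, because $P^A$ and $P^B$ are $R$-smodule maps. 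Hence $Z(A)\sotimes_R Z(B)$ is a sub-smodule of $A\sotimes_R B$ equal to $Z(A\sotimes_R B)$.
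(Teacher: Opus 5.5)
Your proposal is correct. Its skeleton matches the paper's: Lemma \ref{yylem4.2} makes $A\sotimes_R B$ separable over $R$, and Proposition \ref{yypro4.6} reduces the claim to levelwise centers.

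The difference is in the key input. The paper simply cites \cite[Chapter 3, Corollary 2.7]{Bass1967} for the classical identity $Z(A_n\otimes_{R_n}B_n)=Z(A_n)\otimes_{R_n}Z(B_n)$. You instead derive it from the paper's own Lemma \ref{yylem4.5}. The element $e_A\otimes e_B$ is a separability idempotent of $D=A\sotimes_R B$, and its projection is $P^A\sotimes_R P^B$. Finally, the image of a tensor product of maps is the image of the tensor product of their images.

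One presentational point: the proof of Lemma \ref{yylem4.2} is phrased via sections, so rather than appealing to it, it is cleaner to check directly that $\mu_{D/R}(e_A\otimes e_B)=1$ and that $(d\otimes 1)s_0^n(e_A\otimes e_B)=(1\otimes d)s_0^n(e_A\otimes e_B)$ for pure tensors $d=a\otimes b\in D_n$.

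What your route buys:
\begin{itemize}
\item It is self-contained.
\item It explicitly treats a point the paper passes over, namely that $Z(A)\sotimes_R Z(B)\to A\sotimes_R B$ is injective. You correctly obtain this from the idempotent $R_n$-linear retractions $P^A_n$ and $P^B_n$ onto the centers.
\item It works directly at the simplicial level. The proof of Lemma \ref{yylem4.5} (via \eqref{E3.10.1}) already gives ${\rm im}(P^D)=Z(D)$ as simplicial objects, and $P^D=P^A\sotimes_R P^B$ is an $R$-smodule map. So your argument proves the equality without needing Proposition \ref{yypro4.6} at all.
\end{itemize}
The paper's version is shorter, at the cost of relying on an external classical result.
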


\begin{proof}
    By Lemma \ref{yylem4.2},
    $A\sotimes_R B$ is separable over $R$.
    In particular,
    for every $n\geq 0$,
    $A_n\otimes_{R_n} B_n$ is separable over $R_n$.
    By \cite[Chapter 3, Corollary 2.7]{Bass1967},
    $$Z(A_n\otimes_{R_n}B_n)=Z(A_n) \otimes_{R_n}Z(B_n).$$
    It follows from Proposition \ref{yypro4.6} that for every $n\geq 0$
    $$Z(A\sotimes_R B)_n = Z(A_n \otimes_{R_n} B_n)= Z(A_n) \otimes_{R_n}Z(B_n)= Z(A)_n\otimes_{R_n} Z(B)_n.$$
    This completes the proof.
\end{proof}

\subsection{Contrasts with the classical theory}
\label{yysec4.3}

Section \ref{yysec4.1} demonstrates a large portion of the classical theory of separable algebras is retained in the simplicial setting.
There are, nevertheless, important differences.
In particular, 
\begin{enumerate}
\item 
Classically, simple commutative rings are necessarily fields.
This observation is used in Van Oystaeyen and Y. Zhang's proof of \cite[Proposition 4.6]{VOZ1998}.
    But simple commutative srings may not be simplicial fields.

    \item Simplicial separability implies levelwise separability but not conversely.
    This shows simplicial separability is strictly stronger than levelwise separability. 

\end{enumerate}

Let $\Bbbk$ be a commutative ring.
Define $W^{\langle 0 \rangle}:=W^{\langle 0 \rangle}/\Bbbk$ to be the  $\Bbbk$-smodule corresponding, under the Dold-Kan correspondence (\cite[Theorem 8.4.1]{Weibel1994} and \cite[Theorem 3.1]{HSZZ26}), to the chain complex $ \Bbbk \cong \Bbbk $ concentrated in degrees $1$ and $0.$
We may denote the $\Bbbk$-linear basis of $(W^{\langle 0 \rangle})_n$ by $\{\xi^n_j\}_{j=0}^n$.
Explicitly, the face maps are given by \begin{align*}
    d_i(\xi^n_j) = \begin{cases}
        \xi^{n-1}_j & i \geq j+1 \\
        \xi^{n-1}_{j-1} & i \leq j
    \end{cases}
\end{align*}
with the convention that $\xi^n_{-1}=0$, and the degeneracy maps are given by
\begin{align*}
    s_i(\xi^n_j) = \begin{cases}
        \xi^{n+1}_j & i\geq j+1 \\
        \xi^{n+1}_{j+1} & i \leq j
    \end{cases}.
\end{align*}
Moreover, there is a unital commutative $\Bbbk$-salgebra structure on $W^{\langle 0\rangle}$:
\begin{align*}
    \xi^n_i\xi_j = \xi^n_j \xi^n_i = \xi^n_{\min\{i,j\}}
\end{align*}
where $\xi^n_n=1_{(W^{\langle 0\rangle})_n}$ plays the role of unit in $(W^{\langle 0\rangle})_n.$
We note that 
$$p^n_j = \xi^n_j-\xi^n_{j-1} \quad p^n_0= \xi^n_0$$ 
form a complete set of orthogonal idempotents of $(W^{\langle 0 \rangle})_n$.
So $(W^{\langle 0 \rangle})_n \cong \Bbbk^{\oplus (n+1)}$ is not a simple $\Bbbk$-algebra for $n>0$.
We call $W^{\langle 0 \rangle}/\Bbbk$ the {\it simplicial winding algebra} (with base ring $\Bbbk$) \cite[Sect. 6.2]{HSZZ26}.

\begin{lemma}
\label{yylem4.8}
Let $\Bbbk$ be a field.
Then $W^{\langle 0 \rangle}/\Bbbk$ is simple and not a simplicial field.
\end{lemma}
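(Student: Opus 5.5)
To prove Lemma \ref{yylem4.8} I treat the two claims separately, using the explicit structure of $W=W^{\langle 0\rangle}$ recorded above. By ``simple'' I mean that the only sideals of $W$ are $0$ and $W$. By ``simplicial field'' I mean a commutative sring all of whose levels are fields.

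\emph{$W$ is not a simplicial field.} This part is immediate. For $n=1$ the elements $p^1_0=\xi^1_0$ and $p^1_1=\xi^1_1-\xi^1_0$ are nonzero orthogonal idempotents. So $(W)_1\cong\Bbbk\times\Bbbk$ has zero divisors and is not a field.

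\emph{$W$ is simple.} Let $I\neq 0$ be an sideal. I will show $1_{W_0}\in I_0$. Since $s_0^n$ is unital and $I$ is a simplicial subset, this gives $1_{W_n}=s_0^n(1_{W_0})\in I_n$ for all $n$, hence $I=W$. The argument runs through three steps.
\begin{enumerate}
\item[(a)] Pick $n$ and $0\neq x\in I_n$. Write $x=\sum_j c_j p^n_j$ with $c_j\in\Bbbk$ and some $c_j\neq 0$. Multiplying by $c_j^{-1}p^n_j\in W_n$, which is allowed because $\Bbbk$ is a field, gives $p^n_j\in I_n$.
\item[(b)] Compute how face maps act on the idempotents. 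From the formulas for $d_i$:
\begin{itemize}
\item for $j\geq 1$, $d_i(p^n_j)=d_i(\xi^n_j)-d_i(\xi^n_{j-1})$;
\item if $i\geq j+1$, this equals $\xi^{n-1}_j-\xi^{n-1}_{j-1}=p^{n-1}_j$;
\item if $i\leq j-1$, this equals $\xi^{n-1}_{j-1}-\xi^{n-1}_{j-2}=p^{n-1}_{j-1}$;
\item if $i=j$, it collapses to $\xi^{n-1}_{j-1}-\xi^{n-1}_{j-1}=0$;
\item for $j=0$, $d_i(p^n_0)=p^{n-1}_0$ when $i\geq 1$.
\end{itemize}
So for $n\geq 1$ one can always choose a face $d_i$ with $i\neq j$ sending $p^n_j$ to a basic idempotent $p^{n-1}_{j'}$ of $W_{n-1}$, still with $j'\leq n-1$. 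Iterating $n$ times lands in $W_0$. There $p^0_0=\xi^0_0=1_{W_0}$, so $1_{W_0}\in I_0$, as required.
\item[(c)] Consequently every nonzero sideal equals $W$, and $W$ is simple.
\end{enumerate}

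The main point to check carefully is step (b). One must verify from the face formulas, using the convention $\xi^{n}_{-1}=0$, that a suitable face never kills $p^n_j$, and one must track the index shift in the two cases $i<j$ and $i>j$. The rest is formal. Note that step (a) genuinely uses that $\Bbbk$ is a field.
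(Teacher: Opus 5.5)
Your proof is correct, but it takes a different route from the paper on both halves.

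\emph{Not a simplicial field.} The paper argues that every simplicial field is constant: face maps are split surjections since $d_is_i=\mathrm{id}$, and a surjective ring map between fields is an isomorphism. It then observes that $W^{\langle 0\rangle}$ is not constant. You instead exhibit the nonzero orthogonal idempotents $p^1_0,p^1_1$ in level $1$. This is shorter and needs only the levelwise definition. The paper's argument gives the extra structural fact that all simplicial fields are constant.

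\emph{Simplicity.} The paper passes to the normalized complex. It uses that $N(I)$ is a nonzero subcomplex of $N(W^{\langle 0\rangle})=(\Bbbk\xrightarrow{\cong}\Bbbk)$, and that the differential out of degree $1$ is injective, to force $I_0=N(I)_0\neq 0$. Only at that point does it use that $\Bbbk$ is a field. You work entirely inside $W^{\langle 0\rangle}$. The field hypothesis isolates a single idempotent $p^n_j\in I_n$. Your face computation, which in effect shows that $d_i$ kills exactly $p^n_i$ and sends every other $p^n_j$ to some $p^{n-1}_{j'}$, then walks it down to $p^0_0=1$.

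\emph{Trade-off.} Your argument is elementary and self-contained: it avoids Dold--Kan, in particular the fact that $N$ detects nonzero objects. It does, however, depend on the explicit face formulas and on the ideal structure. The paper's argument is basis-free and shows more. Every nonzero simplicial submodule of $W^{\langle 0\rangle}$, over any commutative $\Bbbk$, already has nonzero degree-$0$ part.
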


\begin{proof}
    A simplicial field is necessarily constant because the face maps are surjective by the simplicial identities.
    But any nonzero surjection between fields is an isomorphism.
    Since $W^{\langle 0 \rangle}$ is not a constant sring,
    it is not a simplicial field.

    Suppose $I$ is a nonzero proper sideal of $W^{\langle 0 \rangle}/\Bbbk.$
    Then the normalized complex $N(I)$ of $I$ must be a nonzero subcomplex of $N(W^{\langle 0 \rangle}/\Bbbk) = (\Bbbk \cong \Bbbk).$
    In particular,
    $I_0=N(I)_0 \neq 0$ for otherwise $N(I)_1=0$ as the differential map $N(I)_1\to N(I)_0$ is injective.
    Then $I_0$ is a nontrivial ideal of $(W^{\langle 0 \rangle}/\Bbbk)_0=\Bbbk$,
    which means $I_0=\Bbbk.$
    It follows that $1_{(W^{\langle 0 \rangle}/\Bbbk)_0}\in I_0$ and $1_{(W^{\langle 0 \rangle}/\Bbbk)_n} \in I_n$ for every $n\geq 0$.
    Therefore $I = W^{\langle 0 \rangle}/\Bbbk,$ contradicting $I$ being a proper sideal.
\end{proof}

\begin{example}
\label{yyexa4.9}
    Let $K/\Bbbk$ be a finite separable field extension.
    Then $$(W^{\langle 0 \rangle}/\Bbbk)\sotimes_{\Bbbk} K \cong W^{\langle 0 \rangle}/K$$  is a simple commutative $K$-salgebra.
    Recall from Theorem \ref{yythm2.11} that
    any separable $K$-salgebra $A$ is of the form $ K\Delta^0\sotimes_{K} A_0$.
    \begin{enumerate}
        \item[(1)] As $W^{\langle 0 \rangle}/K$ is not a constant sring,
        $W^{\langle 0 \rangle}/K$ is not separable over $K,$
        even though each $(W^{\langle 0 \rangle}/K)_n = K^{\oplus (n+1)}$ is separable 
        over $K$.
        Therefore,
        levelwise separability is weaker than simplicial separability by Remark \ref{yyrem2.9}(3).

    \item[(2)]  Note that $K$ is separable over $\Bbbk$ \cite[Theorem 4.5.6]{Ford2017}.
    So $W^{\langle 0 \rangle}/K$ is separable over $W^{\langle 0 \rangle}/\Bbbk$ by Lemma \ref{yylem4.1}.
    This gives yet another example, 
    showing that over a non-constant commutative sring, 
    a separable simplicial algebra need not be constant, 
    in contrast to Theorem \ref{yythm2.11}.
    \end{enumerate}
\end{example}

\section{Enriched Morita Theory}
\label{yysec5}
The enriched Morita theory of monoids in a monoidal category is well-developed from a categorical perspective \cite{ BV2002,LP2024,PareigisIII}.
We record some of the general theory of enriched Morita equivalence and characterize the relevant notions of finitely generated projective modules and generators for simplicial modules as finite retracts.

\subsection{Simplicial Morita context}
\label{yysec5.1}

Let $R$ be a commutative simplicial ring.
A {\it Morita context} is a sextuple $(A,B, _AP_B, _BQ_A, f, g)$ consisting of $R$-salgebras $A, B$,
an $(A,B)$-sbimodule $P$, 
and a $(B,A)$-sbimodule $Q$
 with
\begin{align*}
    &f: Q \sotimes_A P \to B \quad \text{a $(B,B)$-sbimodule morphism}\\
    &g: P \sotimes_B Q \to A \quad \text{an $(A,A)$-sbimodule morphism}
\end{align*}
making the following diagrams commutative:
\[
\begin{tikzcd}
    Q \sotimes_A P \sotimes_B Q \arrow[r, "f \sotimes_B Id"] \arrow[d, "Id \sotimes_A g"] & B \sotimes_B Q \arrow[d, "\cong"]\\
    Q \sotimes_A A \arrow[r,"\cong"] & Q
\end{tikzcd}
\qquad
\begin{tikzcd}
P \sotimes_B Q \sotimes_A P \arrow[r, "g \sotimes_A Id"] \arrow[d, "Id \sotimes_B f"] & A \sotimes_A P \arrow[d, "\cong"]\\
    P \sotimes_B B \arrow[r,"\cong"] & P
\end{tikzcd}
\]
where $\cong$ are the natural isomorphisms.

\begin{definition}
\label{yydef5.1}
    Let $P$ be a right $B$-smodule.
    Define $S_{P}: = \shom_B(P, P)$.
    Then $P$ is an $(S_P,B)$-sbimodule under actions given by
    \begin{align*}
        &P \sotimes_R B \to P  && p\otimes b \mapsto pb \\
        &S_{P} \sotimes_R P \to P  &&s\otimes p \mapsto sp:=s(Id_{[n]}\otimes p).
    \end{align*}
    Consequently, $Q_{P}:= \shom_B(P,B)$ is a $(B,S_{P} )$-sbimodule with actions given by
    \begin{align*}
        & Q_{P} \sotimes_R S_{P}  \to Q_{P} && q \otimes s \mapsto \big( qs: \sigma \otimes p\mapsto q(\sigma \otimes s(\sigma \otimes p)) \big)\\
        & B \sotimes_R Q_{P} \to Q_{P} && b \otimes q \mapsto \big( bq : \sigma \otimes p \mapsto \sigma^*(b)q(\sigma\otimes p) \big).
    \end{align*}
    Define a $(B,B)$-sbimodule morphism and an $(S_P,S_P)$-sbimodule morphism
    \begin{align*}
        &\mathfrak{a}: Q_{P} \sotimes_{S_{P}} P \to B \qquad q \otimes_{S_P} p \mapsto \big( qp=q(Id_{[n]} \otimes p) \big) \\
        &\mathfrak{b}: P \sotimes_B Q_{P} \to S_{P}  \qquad p \otimes_B q \mapsto \big( pq: \sigma' \otimes p' \mapsto (\sigma')^*(p)  q(\sigma' \otimes p')\big).
    \end{align*}
    We say $( S_{P} , B, {}_{S_{P}}P_B,{}_B(Q_{P}){}_{S_{P}}, \mathfrak{a}, \mathfrak{b})$ is the {\it canonical Morita context} associated with $P_B$ in $R\sMod.$
    Moreover,
    we say
    \begin{enumerate}
        \item $P_B$ is {\it $B$-basic projective} if $\mathfrak{b}$ is a surjective $(S_P,S_P)$-sbimodule morphism.

        \item $P_B$ is a {\it $B$-sgenerator} if $\mathfrak{a}$ is a surjective $(B,B)$-sbimodule morphism.

        \item $P_B$ is a {\it $B$-sprogenerator} if $P_B$ is a basic projective $B$-sgenerator.
        
    \end{enumerate}
\end{definition}

\begin{remark}
\label{yyrem5.2}
\begin{enumerate}
\item[(1)] 
For a left $A$-smodule $_AP$,
the canonical Morita context 
associated with $_AP$ together 
with $A$-basic projective/ $A$-sgenerator/ $A$-sprogenerator 
can be defined similarly.
\item[(2)] 
Definition \ref{yydef5.1} agrees with \cite[§2]{VOZ1998} and \cite{PareigisIII} specialized to simplicial modules.
\item[(3)] 
Definition \ref{yydef5.1} is also
the direct simplicial analogue of 
\cite[§18]{Lam1999}.
Moreover, it is not hard to verify 
that the simplicial analogues of 
the usual associativity identities 
in \cite[§18]{Lam1999} hold.
Consequently, if $\mathfrak{a} $ 
and $\mathfrak{b}$ are surjective, they are isomorphisms.
\end{enumerate}
\end{remark}

For simplicial modules,
basic projectives and sgenerators admit a simple characterization similar to that of finitely generated projectives and generators in the classical ring and module theory.

\begin{lemma}
\label{yylem5.3}
Let $P$ be a right $B$-smodule.
\begin{enumerate}
\item[(1)] 
$P_B$ is basic projective if and 
only if $P$ is a direct summand of 
some $B^{\oplus t}$.
\item[(2)] 
$P_B$ is an sgenerator if and only if $B$ is a direct summand of some $P^{\oplus t'}$.
    \end{enumerate}
\end{lemma}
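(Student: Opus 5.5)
The plan is to mimic the classical dual-basis argument, using the fact that morphisms out of $B$ are determined by the image of $1_{B_0}$ (Lemma \ref{yylem2.4}). The main tool is the identification $\shom_B(B,M)\cong M$ via $f\mapsto f(Id_{[n]}\otimes 1_{B_n})$. This is proved exactly as the map $K$ in Lemma \ref{yylem3.9}(3): an $n$-morphism $B\Delta^n\sotimes_R B\to M$ is determined by $\sigma\otimes b\mapsto \sigma^*(m)b$. Under this identification, $Q_P\cong \shom_B(P,B)$ and $\mathfrak{a}(q\otimes p)=q(Id\otimes p)$.

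For (1), first suppose $\mathfrak{b}$ is surjective. Then $s_0^0(Id_P)=Id_P\in (S_P)_0$ lies in the image of $\mathfrak{b}_0$, so $Id_P=\sum_{i=1}^t p_i q_i$ with $p_i\in P_0$ and $q_i\in \Hom_{\sMod B}(P,B)$. Define $\pi: B^{\oplus t}\to P$ by $(b_i)\mapsto \sum s_0^n(p_i)b_i$ and $\iota: P\to B^{\oplus t}$ by $p\mapsto (q_i(p))_i$. Both are $0$-morphisms of right $B$-smodules, and $\pi\iota=\sum p_iq_i=Id_P$ after unwinding the formula for $\mathfrak{b}$ at $\sigma'=Id$. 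Hence $P$ is a direct summand of $B^{\oplus t}$.

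Conversely, suppose $\pi\iota=Id_P$ with $\iota: P\to B^{\oplus t}$ and $\pi: B^{\oplus t}\to P$. Let $p_i=\pi(\epsilon_i)\in P_0$, where $\epsilon_i$ are the standard basis vectors in degree $0$, and let $q_i=\mathrm{pr}_i\circ\iota$. Then $Id_P=\mathfrak{b}(\sum p_i\otimes q_i)$ in degree $0$. Since $\mathfrak{b}$ is an $(S_P,S_P)$-sbimodule morphism, its image is a two-sided sideal of $S_P$. That image contains $s_0^n(Id_P)$, which is the unit of $(S_P)_n$, at every level. So the image is all of $S_P$, and $\mathfrak{b}$ is surjective.

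For (2), the argument is the same with the roles swapped. If $\mathfrak{a}$ is surjective, write $1_{B_0}=\sum_{i=1}^{t'} q_i(p_i)$ with $p_i\in P_0$ and $q_i\in \Hom(P,B)$. The map $P^{\oplus t'}\to B$, $(x_i)\mapsto \sum q_i(x_i)$, is then split by $b\mapsto (s_0^n(p_i)b)_i$. Conversely, a splitting $B\to P^{\oplus t'}\to B$ gives elements $p_i\in P_0$ and maps $q_i$ with $\sum q_i(p_i)=1_{B_0}$. Since the image of $\mathfrak{a}$ is a two-sided sideal of $B$ containing $1_{B_0}$, and hence every $1_{B_n}$, it is all of $B$.

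The main obstacle is the bookkeeping in the formula for $\mathfrak{b}$. I need to check that $\mathfrak{b}(\sum p_i\otimes q_i)$, as an $n$-morphism, really agrees with the composite $\pi\iota$ under $\sigma'\otimes p'\mapsto \sum (\sigma')^*(s_0(p_i))q_i(\sigma'\otimes p')$. This uses that degeneracies of $0$-simplices are compatible with every $\sigma'^*$. I also need that the image of an sbimodule morphism is an sbimodule, so that reaching the unit in degree $0$ forces surjectivity at every level.
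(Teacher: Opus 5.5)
Your proposal is correct and follows the paper's proof. Both use the same dual-basis elements $p_i\in P_0$, $q_i\in (Q_P)_0$, the same retractions $B^{\oplus t}\to P\to B^{\oplus t}$ and $B\to P^{\oplus t'}\to B$, and the same ``image is an sideal containing the unit'' step, which the paper writes out for $\mathfrak{b}$ as $s=s\cdot 1_{S_k}=\sum_i\mathfrak{b}\bigl(s\,s_0^k(p_i)\otimes s_0^k(q_i)\bigr)$. Your opening identification $\shom_B(B,M)\cong M$ is never used ($Q_P=\shom_B(P,B)$ holds by definition), so you can drop that paragraph.
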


\begin{proof}
Set $Q: =Q_P=\shom_B(P,B)$ and $S: = S_P = \shom_B(P,P)$.
Consider the canonical Morita context associated with $P_B$
$$( S, B, {}_{S}P_B,{}_BQ{}_{S}, \mathfrak{a}, \mathfrak{b}).$$

    (1)
    Suppose $P_B$ is basic projective.
    Since $\mathfrak{b}: P \sotimes_B Q \to S$ is surjective,
    there exist finitely many $p_i\in P_0$ and $q_i\in Q_0$ such that 
    $1_{S_0} = \sum_{i=1}^t p_iq_i.$
    Then,
    for every $p\in P_k$,
    we have
    \begin{align*}
        p &= 1_{S_k}\cdot p = s_0^k(\sum_{i=1}^t p_iq_i) p =\sum_{i=1}^t (s_0^k(p_i)s_0^k(q_i)) p  \\
        &= \sum_{i=1}^t s_0^k(p_i)(s_0^k(q_i)(Id_{[k]} \otimes p)) = \sum_{i=1}^t s_0^k(p_i)(q_i(p)).
    \end{align*}
    Define right $B$-smodule morphisms
    \begin{align*}
        &f: B^{\oplus t} \to P \qquad (b_1,\dots, b_t) \mapsto \sum_{i=1}^t s_0^k(p_i)b_i\\
        &h: P \to B^{\oplus t} \qquad p \mapsto (q_1(p), \dots, q_t(p)).
    \end{align*}
    It follows that $fh= Id_P$ which implies $P$ is a direct summand of $B^{\oplus t}.$

    Conversely,
    suppose $P$ is a direct summand of $B^{\oplus t}$.
    Then we have a retract
    \begin{align*}
        P \xrightarrow[]{h} B^{\oplus t} \xrightarrow[]{f} P \qquad fh=Id_P
    \end{align*}
    Let $\jmath_i: B \to B^{\oplus t}$ and $\pi_i: B^{\oplus t} \to B$ be the canonical inclusion and projection maps of the direct sum $B^{\oplus t}$.
    Define 
    \begin{align*}
        p_i = f\jmath_i(1_{B_0})\in P_0 \qquad q_i = \pi_ih \in Q_0.
    \end{align*}
    Then for every $p\in P_k$, since $f$ is a right $B$-smodule morphism
    \begin{align*}
        p &= fh(p) = f(\sum_{i=1}^t \jmath_i(1_{B_k})\pi_ih(p) ) = \sum_{i=1}^t f(\jmath_i(s_0^k(1_{B_0}))\pi_ih(p) ) \\
        &= \sum_{i=1}^t s_0^k(f\jmath_i(1_{B_0}) ) \pi_ih(p) =\sum_{i=1}^t s_0^k(p_i)q_i(p).
    \end{align*} 
    This implies that $1_{S_0} = \sum_{i=1}^t p_iq_i.$
    Therefore, for every $s\in S_k,$
    \begin{align*}
        s = s\cdot 1_{S_k}= s \sum_{i=1}^t s_0^k(p_i)s_0^k(q_i) =\sum_{i=1}^t \mathfrak{b}(ss_0^k(p_i)\otimes s_0^k(q_i))
    \end{align*}
    as $\mathfrak{b}$ is an $(S,S)$-sbimodule morphism.
    Therefore, $\mathfrak{b}$ is surjective.

    (2)
    Suppose $P_B$ is an sgenerator.
    Since $\mathfrak{a}: Q\sotimes_S P \to B$ is surjective,
    there exist finitely many $q_i'\in Q_0$ and $p_i'\in P_0$ such that $1_{B_0} = \sum_{i=1}^{t'} q_i'p_i'.$
    Define right $B$-smodule morphisms
    \begin{align*}
        &f': P^{\oplus t'} \to B \qquad (x_1,\cdots ,x_{t'}) \mapsto  \sum_{i=1}^{t'}  q_i'(x_i) \\
        &h': B \to P^{\oplus t'} \qquad y \mapsto (s_0^k(p_1')y,\dots, s_0^k(p_{t'}')y )
    \end{align*}
    Then by construction, $f'h'(1_{B_0})= 1_{B_0}$ which implies the right $B$-smodule morphism $f'h'$ is $Id_B.$
    Therefore, $B$ is a direct summand of $P^{\oplus t'}.$

    Conversely, suppose $B$ is a direct summand of $P^{\oplus t'}$. 
    So there is retract
    \begin{align*}
        B \xrightarrow{h'} P^{\oplus t'} \xrightarrow{f'} B \qquad f'h'=Id_B
    \end{align*}
    Write $h'(1_{B_0}) = (p_1', \cdots, p_{t'}')$
    and define
    \begin{align*}
        q_i' = P \xrightarrow{\jmath_i} P^{\oplus t'} \xrightarrow{f'} B
    \end{align*}
    Then $1_{B_0} = f'h'(1_{B_0})=\sum_{i=1}^{t'}  q_i'p_i'=\mathfrak{a}(\sum_{i=1}^{t'}  q_i'\otimes p_i').$
    Therefore, $\mathfrak{a}$ is surjective.
\end{proof}

\subsection{Simplicial Morita equivalence}
\label{yysec5.2}

\begin{theorem}
\label{yythm5.4}
\cite[Theorems 5.1 and 5.3]{PareigisIII}
Let $A$ and $B$ be $R$-salgebras.
The following are equivalent.
\begin{enumerate}
\item[(1)] 
There exists an $R\sMod$-enriched 
equivalence $F: A\sMod 
\leftrightarrows B \sMod: G$.
\item[(2)] 
There exists a Morita context 
$(A, B, {}_AP_B, {}_BQ_A, f, g)$ 
where $f$ and $g$ are surjective 
{\rm{(}}equivalently isomorphisms 
by Remark \ref{yyrem5.2}(3){\rm{)}}.
\end{enumerate}
In particular, 
there are $R\sMod$-enriched 
natural isomorphisms
\begin{align*}
F\cong Q \sotimes_A -  
\qquad G \cong P \sotimes_B  -
\end{align*}
and $_AP$ is an $A$-sprogenerator 
and $_BQ$ is a $B$-sprogenerator.
\end{theorem}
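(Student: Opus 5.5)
This is the enriched Morita theorem of Pareigis, \cite[Theorems 5.1 and 5.3]{PareigisIII}, specialized to the closed symmetric monoidal category $R\sMod$. The plan is to check that the general hypotheses hold here and then sketch both directions in the simplicial language of Section \ref{yysec5.1}. By Remark \ref{yyrem5.2}(3), surjective Morita maps are automatically isomorphisms, so we may work with isomorphisms throughout.

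\textbf{(2) $\Rightarrow$ (1).} Given a Morita context with $f$ and $g$ isomorphisms, set $F:=Q\sotimes_A -$ and $G:=P\sotimes_B -$. Both are $R\sMod$-enriched functors, since tensoring with a fixed sbimodule acts on simplicial Hom-sets through the tensored structure of Theorem \ref{yythm3.6}. Associativity of $\sotimes$ together with $f$ and $g$ gives isomorphisms
\begin{align*}
GF\cong (P\sotimes_B Q)\sotimes_A - \cong A\sotimes_A - \cong Id, \qquad FG\cong Id.
\end{align*}
These are natural, and enriched-natural, because they are built from the canonical associativity and unit isomorphisms. The two commutative squares in the definition of a Morita context supply the triangle identities, if we want an adjoint equivalence.

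\textbf{(1) $\Rightarrow$ (2).} Given an enriched equivalence $(F,G)$, put $P:=G(B)$ and $Q:=F(A)$. Enrichment gives $R$-salgebra maps $B\cong \shom_B(B,B)^{o}\to \shom_A(P,P)^{o}$, which makes $P$ an $(A,B)$-sbimodule and similarly $Q$ a $(B,A)$-sbimodule. The key step is to show $F\cong \shom_A(P,-)\cong Q\sotimes_A -$. For the first isomorphism, use the enriched Yoneda lemma: $\shom_B(B,FM)\cong FM$ by the $K$-map argument of Lemma \ref{yylem3.9}(3), and $\shom_B(B,FM)\cong\shom_A(GB,M)$. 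For the second, the enriched functor $F$ is an equivalence, so it preserves colimits and tensors with $R$-smodules. Every $M$ is a coequalizer of free smodules of the form $A\sotimes_R V$, and $F$ commutes with these, which identifies $F$ with $F(A)\sotimes_A -$. The morphisms $f:Q\sotimes_A P\to B$ and $g:P\sotimes_B Q\to A$ then come from the isomorphisms $FG(B)\cong B$ and $GF(A)\cong A$. The associativity squares follow from naturality of the unit and counit.

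\textbf{Progenerator claim.} The surjections $f$ and $g$ yield, exactly as in the proof of Lemma \ref{yylem5.3}, finitely many elements in degree $0$ summing to the units. Hence $P$ is a retract of $A^{\oplus t}$, and $A$ is a retract of $P^{\oplus t'}$. By Lemma \ref{yylem5.3}, $P$ is an $A$-sprogenerator, and symmetrically $Q$ is a $B$-sprogenerator.

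\textbf{Main obstacle.} The hardest step is the identification $F\cong Q\sotimes_A -$ as an \emph{enriched} natural isomorphism. This needs the fact that an enriched equivalence preserves tensors by $R$-smodules and the coequalizer presentation of $M$ by free smodules. If a direct check is too messy, I would first establish the ordinary isomorphism levelwise using $R\Delta^n\sotimes_R -$, then verify enriched naturality by a diagram chase like the one at the end of Lemma \ref{yylem3.9}(3).
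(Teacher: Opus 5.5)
The paper gives no proof of this statement. It imports the result from Pareigis's enriched Morita theory \cite[Theorems 5.1 and 5.3]{PareigisIII}, and the only implicit check is that $R\sMod$ fits Pareigis's closed symmetric monoidal framework.

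Your proposal is a genuinely different, self-contained route that specializes the standard argument to simplicial modules:
\begin{enumerate}
\item[(i)] tensoring with the context bimodules for (2) $\Rightarrow$ (1);
\item[(ii)] an enriched Eilenberg--Watts argument with $P=G(B)$ and $Q=F(A)$ for (1) $\Rightarrow$ (2);
\item[(iii)] a degree-$0$ dual-basis argument plus Lemma \ref{yylem5.3} for the progenerator claim.
\end{enumerate}
The citation buys brevity and generality. Your route shows exactly where the simplicial structure is used: the Yoneda isomorphism $\shom_B(B,N)\cong N$ via the $K$-map, degree-$0$ elements transported by $s_0^n$, and the retract criterion of Lemma \ref{yylem5.3}. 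It also gets the progenerator statement directly, without Theorem \ref{yythm5.5}. The outline is sound. You correctly identify enriched naturality of $F\cong F(A)\sotimes_A -$ as the step needing real work, namely compatibility with the comparisons $F(R\Delta^n\sotimes_R M)\cong R\Delta^n\sotimes_R F(M)$.

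Two points need tightening.

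\textbf{The associativity squares need an adjoint equivalence.} They do not follow from naturality of the unit and counit alone. Take $A=B=R$, $F=G=Id$, $\epsilon=Id$, and let $\eta$ be multiplication by $s_0^n(u)$ for a unit $u\neq 1$ of $R_0$, when one exists. Then $f$ is multiplication and $g$ is $u^{-1}$ times multiplication, so $f\sotimes_B Id_Q\neq Id_Q\sotimes_A g$. The fix is to first replace the equivalence by an adjoint equivalence, which is always possible by modifying one of the two isomorphisms, and then use the triangle identities:
\begin{enumerate}
\item[(a)] $\epsilon F\circ F\eta=1_F$ evaluated at $A$ reads $(f\sotimes_B Id_Q)\circ(Id_Q\sotimes_A g^{-1})=Id_Q$, which is exactly the first square;
\item[(b)] $G\epsilon\circ\eta G=1_G$ evaluated at $B$ gives the second square.
\end{enumerate}

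\textbf{Only half of the progenerator claim is literally as in Lemma \ref{yylem5.3}.} The generator half is: writing $1_{A_0}=\sum_i g(p_i\otimes q_i)$ makes $A$ a retract of $P^{\oplus t'}$. For $P$ as a retract of $A^{\oplus t}$, write $1_{B_0}=\sum_j f(q_j\otimes p_j)$ and use the maps $p\mapsto \big(g(p\otimes s_0^k(q_j))\big)_j$ and $(a_j)\mapsto\sum_j a_j s_0^k(p_j)$. Their composite is the identity only because of the Morita compatibility $g(p\otimes q)p'=p\,f(q\otimes p')$. In Lemma \ref{yylem5.3} this identity is built into the definitions of $\mathfrak{a}$ and $\mathfrak{b}$; here you must invoke the context squares explicitly.
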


\begin{theorem}
\label{yythm5.5}
\cite[Theorem 2.1]{VOZ1998}
Let $(A,B, {}_AP_B, {}_BQ_A, f, g)$ 
be a Morita context in $R\sMod$.
Suppose $f,g$ are surjective {\rm{(}}equivalently isomorphisms{\rm{)}}.
Then 
\begin{enumerate}
\item[(1)] 
There are canonical $R$-salgebra isomorphisms induced by the sbimodule structures on $P$ and $Q$:
        \begin{align*}
           &A \cong \shom_B(P,P) && A^o \cong \shom_{B}(Q,Q)\\
           & B \cong \shom_A(Q,Q)  && B^o \cong 
        \shom_{A}(P,P). \end{align*}


        \item[(2)] The smodules ${}_AP,  P_B, {}_BQ, Q_A$ are sprogenerators.
    \end{enumerate}
\end{theorem}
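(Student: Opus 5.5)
We establish (1) and (2) separately, following the classical argument of \cite[Theorem 2.1]{VOZ1998} (or \cite[§18]{Lam1999}) and checking each step in the simplicial setting with the tools of Section \ref{yysec5.1}. Since $f$ and $g$ are isomorphisms by Remark \ref{yyrem5.2}(3), choose finite families with $1_{A_0}=\sum_i g(p_i\otimes q_i)$ and $1_{B_0}=\sum_j f(q_j'\otimes p_j')$, where $p_i,p_j'\in P_0$ and $q_i,q_j'\in Q_0$. The identities at higher levels are then handled by applying $s_0^k$, exactly as in the proof of Lemma \ref{yylem5.3}.

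For (1), take $A\cong \shom_B(P,P)$ as the model case. Define $\lambda\colon A\to \shom_B(P,P)$ by sending $a\in A_n$ to the $n$-morphism $\sigma\otimes p\mapsto \sigma^*(a)p$. This is a right $B$-linear map since $P$ is an $(A,B)$-sbimodule, and it is a morphism of $R$-salgebras by the definition of $\circ$ in \eqref{E3.4.3}. Injectivity: if $\lambda(a)=0$, then $a=a\sum_i s_0^n g(p_i\otimes q_i)=\sum_i g(a s_0^n(p_i)\otimes s_0^n(q_i))=0$. For surjectivity, given $\phi\in\shom_B(P,P)_n$, define
$$a_\phi:=\sum_i g\big(\phi(Id_{[n]}\otimes s_0^n p_i)\otimes s_0^n q_i\big)\in A_n.$$
Using the Morita identity $g(p\otimes q)p''=p\,f(q\otimes p'')$ together with the $B$-linearity of $\phi$, we get $\lambda(a_\phi)(\sigma\otimes p)=\sum_i \sigma^*\phi(Id\otimes s_0p_i)\,f(s_0q_i\otimes p)=\sigma^*\phi\big(Id\otimes \sum_i s_0(p_i)f(s_0q_i\otimes p)\big)=\phi(\sigma\otimes p)$. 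The other three isomorphisms follow by symmetry, that is, by exchanging the roles of $P$ and $Q$ and of the left and right sides. For the opposite-algebra cases, we note that a left $A$-action on $Q_A$ read through $\shom$ of right modules reverses composition.

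For (2), we apply Lemma \ref{yylem5.3} and its left-module analogue from Remark \ref{yyrem5.2}(1). For $P_B$, define $h\colon P\to B^{\oplus t}$ by $p\mapsto (f(s_0q_i\otimes p))_i$ and $B^{\oplus t}\to P$ by $(b_i)\mapsto\sum_i s_0(p_i)b_i$. Then the composite is $p\mapsto \sum_i g(s_0p_i\otimes s_0q_i)p=p$, so $P_B$ is a direct summand of $B^{\oplus t}$ and hence basic projective. For the generator property, $B\to P^{\oplus t'}$, $b\mapsto (s_0(p_j')b)_j$, followed by $P^{\oplus t'}\to B$, $(x_j)\mapsto \sum_j f(s_0q_j'\otimes x_j)$, is the identity because $\sum_j f(q_j'\otimes p_j')=1$. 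The same construction works for ${}_AP$, ${}_BQ$ and $Q_A$, with the roles of $f$ and $g$ swapped as needed.

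The main obstacle is bookkeeping rather than an idea. One must check that each map defined above is a genuine simplicial (i.e.\ $\theta^*$-compatible) morphism of smodules, and that $a_\phi$ depends compatibly on $n$, so that $\phi\mapsto a_\phi$ is a simplicial inverse of $\lambda$. The key point is that degeneracies $s_0^n$ of level-$0$ elements commute with all structure maps, since $\theta^*s_0^n=s_0^m$, and that $\theta^*(\phi)(Id\otimes x)=\phi(\theta\otimes x)$ by \eqref{E3.1.1}. With these two facts each identity reduces levelwise to the classical computation.
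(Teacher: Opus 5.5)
Your argument is correct. Note that the paper gives no proof of this statement: it imports it from \cite[Theorem 2.1]{VOZ1998}, the Morita theory of monoids in a (braided) closed monoidal category, with Remark \ref{yyrem5.2}(2) matching that paper's notion of progenerator to Definition \ref{yydef5.1}.

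You instead give a self-contained, elementwise verification inside $R\sMod$. You pick level-$0$ ``dual basis'' data $1_{A_0}=\sum_i g(p_i\otimes q_i)$ and $1_{B_0}=\sum_j f(q_j'\otimes p_j')$, and push it to every level with $s_0^k$. You then write down the explicit inverse $\phi\mapsto a_\phi$ of $\lambda$. Your computation $\sigma^*(a_\phi)p=\phi(\sigma\otimes p)$ checks out: it uses $\sigma^*(Id_{[n]}\otimes s_0^np_i)=\sigma\otimes s_0^kp_i$, the identity $g(x\otimes y)p=xf(y\otimes p)$, and $B$-linearity of $\phi$. You get (2) by exhibiting the retractions and invoking Lemma \ref{yylem5.3} together with its left-handed analogue, which the paper also uses tacitly.

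The citation is shorter and places the result in its natural categorical generality. However, it leaves the reader to check that $R\sMod$ satisfies the standing hypotheses of \cite{VOZ1998} and that the definitions agree. Your route instead gives explicit formulas for all four isomorphisms. It also shows that only surjectivity of $f$ and $g$ is ever used, never injectivity. And it exposes the same level-$0$ mechanism that drives Lemma \ref{yylem5.3} and Proposition \ref{yypro6.7}.

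One wording slip: the opposite-algebra cases come from the \emph{right} $A$-action on ${}_BQ$ and the right $B$-action on ${}_AP$, read inside $\shom$ of left modules, not from ``a left $A$-action on $Q_A$''. The mathematics is unaffected. These cases follow from the left--right mirror of your computation, for instance via $b_\psi=\sum_j f\big(s_0^nq_j'\otimes\psi(Id_{[n]}\otimes s_0^np_j')\big)$. The $Q$-statements follow by applying the $P$-statements to the symmetric context $(B,A,{}_BQ_A,{}_AP_B,g,f)$.
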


\begin{corollary}
\label{yycor5.6}
    Let $A$ and $B$ be $R$-salgebras.
    Suppose $_AP_B$ is an $(A,B)$-sbimodule.
    The following are equivalent.
    \begin{enumerate}
        \item[(1)] $P_B$ is an sprogenerator and the canonical $R$-salgebra map $\lambda: A \to \shom_B(P,P)$ given by the left $A$-action is an  isomorphism.

        \item[(2)] $_AP$ is an sprogenerator and the canonical $R$-salgebra  map $\rho: B^o \to \shom_{A}(P,P)$ given by the right $B$-action is an  isomorphism.

    \end{enumerate}
   
\end{corollary}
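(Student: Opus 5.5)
The plan is to prove the two implications by building an explicit Morita context in each direction and then applying Theorem \ref{yythm5.5}. By symmetry (replace $A$, $B$, $P$ by $B^o$, $A^o$, $P$), it suffices to prove (1) $\Rightarrow$ (2).

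Assume (1): $P_B$ is an sprogenerator and $\lambda: A \to \shom_B(P,P)=S_P$ is an isomorphism of $R$-salgebras. Consider the canonical Morita context of Definition \ref{yydef5.1},
\[
(S_P, B, {}_{S_P}P_B, {}_B(Q_P)_{S_P}, \mathfrak{a}, \mathfrak{b}).
\]
Since $P_B$ is basic projective and an sgenerator, both $\mathfrak{a}$ and $\mathfrak{b}$ are surjective, hence isomorphisms by Remark \ref{yyrem5.2}(3). Transport this context along $\lambda$. The left $S_P$-action on $P$ is $s\cdot p = s(Id_{[n]}\otimes p)$, so under $\lambda$ it becomes exactly the given left $A$-action on $P$. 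Likewise $Q_P$ becomes a $(B,A)$-sbimodule, and we obtain a Morita context $(A,B,{}_AP_B,{}_BQ_A,\mathfrak{a},\lambda^{-1}\mathfrak{b})$ with both pairing maps surjective.

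Theorem \ref{yythm5.5} now applies to this context. Part (2) shows that ${}_AP$ is an sprogenerator. Part (1) gives a canonical $R$-salgebra isomorphism $B^o \cong \shom_A(P,P)$, induced by the sbimodule structure, namely by the right $B$-action on $P$.

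The point needing care is that this canonical isomorphism really is the map $\rho$ in the statement, i.e. $b\mapsto (\sigma\otimes p\mapsto p\,\sigma^*(b))$, rather than some other identification. I would check this by unwinding the construction in \cite[Theorem 2.1]{VOZ1998}. Alternatively, I would verify it directly:
\begin{itemize}
\item \emph{Injectivity.} Compose $\rho$ with $\mathfrak{a}$ and use an expression $1_{B_0}=\sum_i q_i'p_i'$. If $\rho(b)=0$, then $b=\sum_i q_i'(p_i')\,b=\sum_i q_i'(p_i'\,b)=0$.
\item \emph{Surjectivity.} Given $\phi\in\shom_A(P,P)_n$, use $1_{S_0}=\sum_i p_iq_i$ (Lemma \ref{yylem5.3}), which by $\lambda$ comes from $A_0$. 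Set $b=\sum_i q_i'\big(\phi(Id\otimes p_i')\big)$ and check that $\rho(b)=\phi$ using $A$-linearity of $\phi$ and the two dual-basis identities.
\end{itemize}
I expect this identification to be the main obstacle. Throughout it, the $n$-morphism bookkeeping requires evaluating on $\sigma\otimes p$ and inserting $s_0^k$ of the degree-$0$ dual bases.

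The converse (2) $\Rightarrow$ (1) follows by the same argument applied to the left-module canonical Morita context of ${}_AP$ (Remark \ref{yyrem5.2}(1)), with the roles of $\lambda$ and $\rho$ interchanged.
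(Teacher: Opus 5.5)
Your proposal is correct and follows the paper's proof essentially step for step: the paper also transports the canonical Morita context of $P_B$ along $\lambda$ (writing the pairings as $\mathfrak{a}\circ\overline{\lambda}$ and $\lambda^{-1}\circ\mathfrak{b}$ and checking the two compatibility squares explicitly), then applies Theorem~\ref{yythm5.5}, and obtains the converse by symmetry. The identification of the resulting isomorphism $B^o\cong\shom_A(P,P)$ with $\rho$ is already part of the statement of Theorem~\ref{yythm5.5} (``induced by the sbimodule structures''), so your direct check is optional; if you do carry it out, the surjectivity step needs $\lambda$ to be surjective in level $k$ (to rewrite $x\mapsto p\,q_i'(x)$ as left multiplication by an element of $A_k$, so that the $A$-linearity of $\phi$ applies), and the level-$0$ identity $1_{S_0}=\sum_i p_iq_i$ alone does not suffice, since it holds for any $A$.
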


\begin{proof}
    (1) $\implies$ (2) 
    Consider the canonical Morita context associated with $P_B$ 
    $$( S_P,B,  {}_{S_P}P_B,{}_B(Q_P){}_{S_P}, \mathfrak{a}, \mathfrak{b}).$$
    Since $P_B$ is a $B$-sprogenerator,
    both $\mathfrak{a}, \mathfrak{b}$ are isomorphisms.
    Given the isomorphism 
    $$\lambda: A \to S_P= \shom_B(P,P)$$
    the $S_P$-actions on $P$ and $Q_P$ can be identified with $A$-actions by pulling back along $\lambda$.
    In particular,
    the induced $A$-actions recover the original $(A,B)$-sbimodule structure on $P$ and $(B,A)$-sbimodule structure on $Q_P$ via 
    $$a\cdot p =\lambda(a) \cdot p .$$
    Therefore,
    there is a canonical $(B,B)$-sbimodule isomorphism induced by $\lambda$
    \begin{align*}
        \overline{\lambda}: Q_P \sotimes_A P \cong Q_P \sotimes_{S_P} P \qquad q\otimes_A p \mapsto q \otimes_{S_P} p.
    \end{align*}
    This induces a Morita context $(A, B, {}_AP_B,{}_B (Q_{P})_A, f,g)$ with isomorphisms
    \begin{align*}
        &f: Q_P \sotimes_A P \xrightarrow[]{\overline{\lambda}} Q_P \sotimes_{S_P} P \xrightarrow[]{\mathfrak{a}} B \\
        &g: P \sotimes_{B} Q_P \xrightarrow[]{\mathfrak{b}} S_P \xrightarrow[]{\lambda^{-1}} A.
    \end{align*}
    Indeed, $f$ and $g$ satisfy the compatibility expressed in the commutative diagrams before Definition \ref{yydef5.1},
    since (recall $a \cdot p'  = \lambda(a)\cdot p'$)
    \begin{align*}
        g(p\otimes_B q)\cdot p' &= \lambda (g(p\otimes_B q)) \cdot p'=\lambda(\lambda^{-1}\mathfrak{b}(p\otimes_B q)) \cdot p'\\
        &=\mathfrak{b}(p \otimes_{B} q)\cdot p'=p \cdot \mathfrak{a}(q 
        \otimes_{S_P} p') = p\cdot f(q \otimes_A p')
    \end{align*}
    and similarly 
    \begin{align*}
        f(q \otimes_A p)\cdot q' &=\mathfrak{a}(q \otimes_{S_P} p)\cdot q' =q \cdot \mathfrak{b}(p\otimes_B q')\\
        &=q \cdot \lambda(\lambda^{-1}\mathfrak{b}(p\otimes_B q')) = q \cdot g(p \otimes_B q').
    \end{align*}
    By Theorem \ref{yythm5.5},
    $_AP$ is an sprogenerator and the canonical map $B^o \to \shom_{A}(P,P)$ is an isomorphism.

    (2) $\implies $ (1) This is symmetric to the previous argument.
    We omit the proof.
\end{proof}

\begin{definition}
\label{yydef5.7}
An $(A,B)$-sbimodule $P$ is 
{\it invertible} if there is a 
$(B,A)$-sbimodule $Q$ where 
$P \sotimes_B Q \cong A$ as 
$(A,A)$-sbimodules and 
$Q \sotimes_A P \cong B$ as 
$(B,B)$-sbimodules.
\end{definition}

\begin{theorem}
\label{yythm5.8}
    Let $A$ and $B$ be $R$-salgebras.
    The following are equivalent.
\begin{enumerate}
\item[(1)] There exists an $R\sMod$-enriched equivalence $G: B\sMod  \to A\sMod.$
\item[(2)] There exists a $B$-sprogenerator ${}_AP_B$ such that the canonical map, given by the left $A$-action, $\lambda: A \to \shom_B(P,P)$ is an isomorphism.
{\rm{(}}By Corollary \ref{yycor5.6},
$P$ is equivalently an $A$-sprogenerator where the canonical sring morphism given by the right $B$-action is an isomorphism.{\rm{)}}

        \item[(3)] There exists an invertible $(A,B)$-sbimodule $_AP_B$.
    \end{enumerate}
    In particular, 
    any $R\sMod$-enriched equivalence $G: B\sMod \to A\sMod$ is naturally isomorphic to $_AP_B \sotimes_B -.$ 
\end{theorem}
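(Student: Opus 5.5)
We prove the cycle (1) $\Rightarrow$ (3) $\Rightarrow$ (2) $\Rightarrow$ (1), leaning on Theorems \ref{yythm5.4} and \ref{yythm5.5} and on Corollary \ref{yycor5.6}.

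For (1) $\Rightarrow$ (3), let $G: B\sMod\to A\sMod$ be an $R\sMod$-enriched equivalence with quasi-inverse $F$. Theorem \ref{yythm5.4} gives a Morita context $(A,B,{}_AP_B,{}_BQ_A,f,g)$ with $f,g$ surjective. By Remark \ref{yyrem5.2}(3), $f$ and $g$ are then isomorphisms. So $f: Q\sotimes_A P\cong B$ and $g: P\sotimes_B Q\cong A$ are sbimodule isomorphisms, and $P$ is invertible in the sense of Definition \ref{yydef5.7}. Theorem \ref{yythm5.4} also yields $G\cong P\sotimes_B -$, which is the final assertion of the theorem.

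For (3) $\Rightarrow$ (2), suppose ${}_AP_B$ is invertible with inverse $Q$, via isomorphisms $f: Q\sotimes_A P\to B$ and $g: P\sotimes_B Q\to A$. We cannot immediately apply Theorem \ref{yythm5.5}, because the two square diagrams of a Morita context need not commute for arbitrary isomorphisms $f,g$. This is the main obstacle. The standard fix is to keep $g$ and replace $f$ by the composite
\[Q\sotimes_A P \xrightarrow{\ \cong\ } B\sotimes_B Q\sotimes_A P \xrightarrow{\ f^{-1}\otimes Id\otimes Id\ } Q\sotimes_A P\sotimes_B Q\sotimes_A P \xrightarrow{\ Id\otimes g\otimes Id\ } Q\sotimes_A A\sotimes_A P\cong Q\sotimes_A P\xrightarrow{\ f\ } B.\]
The classical argument in \cite[§18]{Lam1999} or \cite{PareigisIII} shows that the modified pair satisfies both associativity squares. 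That argument is pure diagram chasing in a monoidal category, and every map involved is a $0$-morphism of sbimodules in $R\sMod$, so it transfers verbatim. With an honest Morita context with isomorphisms in hand, Theorem \ref{yythm5.5} gives that $P_B$ is a sprogenerator and that $A\cong\shom_B(P,P)$. It remains to check that this isomorphism is the canonical map $\lambda$ induced by the left $A$-action. This holds because the isomorphism in Theorem \ref{yythm5.5}(1) is by construction the one coming from the sbimodule structure.

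For (2) $\Rightarrow$ (1), take the canonical Morita context $(S_P,B,P,Q_P,\mathfrak a,\mathfrak b)$ of $P_B$. Since $P_B$ is a sprogenerator, $\mathfrak a$ and $\mathfrak b$ are surjective, hence isomorphisms. Transport this context along $\lambda: A\cong S_P$, exactly as in the proof of Corollary \ref{yycor5.6}. This gives a Morita context $(A,B,P,Q_P,f,g)$ with $f,g$ isomorphisms. Theorem \ref{yythm5.4} then yields an $R\sMod$-enriched equivalence $B\sMod\to A\sMod$, namely $P\sotimes_B-$ with quasi-inverse $Q_P\sotimes_A -$. This closes the cycle. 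The parenthetical remark in (2) is exactly Corollary \ref{yycor5.6}.
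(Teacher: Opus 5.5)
Your proof is correct, but it runs the cycle opposite to the paper. The paper proves (1)$\Rightarrow$(2) from Theorems \ref{yythm5.4} and \ref{yythm5.5}. It gets (2)$\Rightarrow$(3) by taking the canonical context of $P_B$ and transporting along $\lambda$. It closes with (3)$\Rightarrow$(1) by directly checking that $Q\sotimes_A -$ and $P\sotimes_B -$ are mutually quasi-inverse.

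The substantive difference is where the compatibility of $f$ and $g$ is handled. In the paper it never has to be addressed. The equivalence in (3)$\Rightarrow$(1) only needs the natural isomorphisms $Q\sotimes_A P\sotimes_B M\cong M$ and $P\sotimes_B Q\sotimes_A N\cong N$, which any pair of isomorphisms supplies. A compatible Morita context is then recovered from the equivalence by Theorem \ref{yythm5.4}.

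Your (3)$\Rightarrow$(2) goes straight to Theorem \ref{yythm5.5}, so you must repair $f$ first. Your modified $f$ is precisely the standard promotion of an equivalence to an adjoint equivalence: with unit $g^{-1}$ and counit $f$, the two Morita squares are the triangle identities. The repair is valid here. Its natural setting is the bicategory of $R$-salgebras and sbimodules rather than a single monoidal category, and the cleanest justification is the adjoint-equivalence lemma as in Mac Lane.

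Your route therefore shows explicitly that the absence of compatibility conditions in Definition \ref{yydef5.7} is harmless. The cost is an extra coherence argument and use of both directions of Theorem \ref{yythm5.4}. The paper's route is shorter: it uses only one direction of Theorem \ref{yythm5.4} plus a direct verification, and lets Pareigis's theorem absorb the coherence issue.
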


\begin{proof}
    (1) $\implies$ (2) By Theorem \ref{yythm5.4},
    there is a Morita context 
    $$(A, B, {}_AP_B, {}_BQ_A, f, g)$$
    where $f,g$ are surjective.
    It follows from Theorem \ref{yythm5.5} that $P_B$ is a $B$-sprogenerator such that the canonical map $\lambda: A \to \shom_B(P,P)$ is an isomorphism.

    (2) $\implies$ (3) Consider the canonical Morita context associated with $P_B$
    $$( S_P,B,  {}_{S_P}P_B,{}_B(Q_P){}_{S_P}, \mathfrak{a}, \mathfrak{b})$$
    where $\mathfrak{a}, \mathfrak{b}$ are isomorphisms.
    Then $Q_P$ is the inverse of $P$ and $P$ is an invertible $({S_P},B)$-sbimodule.
    Since the canonical map $\lambda: A\to S_P$ is an isomorphism,
    we identify $P$ as an invertible $(A,B)$-sbimodule as we did in the proof of Corollary \ref{yycor5.6}.

    (3) $\implies$ (1) If $_AP_B$ is invertible,
    let $Q$ be the inverse.
    Then it is easy to verify $F=Q\sotimes_A -$ and $G= P \sotimes_B -$ determine an $R\sMod$-enriched equivalence.
\end{proof}

\begin{definition}
\label{yydef5.9}
Let $A$ and $B$ be $R$-salgebras.
We say $A$ and $B$ are 
{\it simplicial Morita equivalent}
if any one of the equivalent 
conditions in Theorem 
\ref{yythm5.8} is satisfied.
\end{definition}

\subsection{A Proposition}
\label{yysec5.3}
The following generalizes \cite[Proposition 4.4.2]{Ford2017}.

\begin{proposition}
\label{yypro5.10}
    Let $S$ be a commutative $R$-salgebra and $A$ be an $S$-salgebra.
    \begin{enumerate}
        \item[(1)] If $A$ is separable over $S$ and $S$ is separable over $R$,
        then $A$ is separable over $R$.

        \item[(2)] If $A$ is separable over $R$,
        then $A$ is separable over $S$.

        \item[(3)] 
        If $A$ is separable over $R$ and $A$ is an $S$-sprogenerator,
        then $S$ is separable over $R.$
    \end{enumerate}
\end{proposition}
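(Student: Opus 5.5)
Each part will follow the classical argument of \cite[Proposition 4.4.2]{Ford2017}, working with separability idempotents in $(A^e)_0$ and the splitting criteria of Proposition \ref{yypro2.7}. Throughout, write $A^{e,R}=A\sotimes_R A^o$ and $A^{e,S}=A\sotimes_S A^o$. There is a canonical surjective sring morphism $\pi:A^{e,R}\to A^{e,S}$, which is the levelwise quotient $A_n\otimes_{R_n}A_n^o\to A_n\otimes_{S_n}A_n^o$. It is compatible with the multiplication maps, in the sense that $\mu_{A/S}\circ\pi=\mu_{A/R}$.

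For (2), let $e\in(A^{e,R})_0$ be a separability idempotent for $A/R$ and set $\bar e=\pi(e)$. Then $\mu_{A/S}(\bar e)=\mu_{A/R}(e)=1_{A_0}$. Also $\pi$ commutes with $s_0^n$, so applying $\pi$ to $(a\otimes 1)s_0^n(e)=(1\otimes a)s_0^n(e)$ gives the same identity for $\bar e$. Hence $\bar e$ is a separability idempotent, and Definition \ref{yydef2.8} applies. Alternatively, $\pi$ identifies $J_{A/S}$ with the image of $J_{A/R}$, so $J_{A/S}\bar e=0$ and Proposition \ref{yypro2.7}(3) applies.

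For (1), let $f=\sum_j u_j\otimes v_j\in (S\sotimes_R S)_0$ be a separability idempotent of $S$ over $R$. Let $e=\sum_i x_i\otimes_S y_i\in(A^{e,S})_0$ be one for $A$ over $S$, and fix representatives $x_i\otimes_R y_i$ in $(A^{e,R})_0$. Following Ford, set
\[
E=\sum_{i,j} x_iu_j\otimes_R v_jy_i\in (A^{e,R})_0 .
\]
I expect this element to be independent of the choice of representatives; the point is that $f$ absorbs the $S$-balancing. Indeed, $f$ is a separability idempotent and $S$ is central in $A$, so $(s\otimes 1)f=(1\otimes s)f$. Then $\mu_{A/R}(E)=\sum_i x_i\mu(f)y_i=1$. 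The identity $(a\otimes 1)s_0^n(E)=(1\otimes a)s_0^n(E)$ is checked levelwise at each $n$ in two steps. First use the $e$-identity, which holds only modulo $S$-balancing. Then use the $f$-identity to pass from $\otimes_S$ to $\otimes_R$. These are exactly the classical ring computations applied to $s_0^n$ of everything, because $s_0^n$ is a ring map carrying separability idempotents to separability idempotents (Remark \ref{yyrem2.9}(3)). An alternative, more functorial route is to compose splittings. Regard $A^{e,R}\to A^{e,S}\to A$, and write $A^{e,S}\cong A\sotimes_S (S\sotimes_R S)\sotimes_S A^o$ modulo identifications. Tensoring the $S^e$-splitting of $\mu_{S/R}$ with $A$ on both sides gives an $A^{e,R}$-linear section of $\pi$. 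Composing it with the $A^{e,S}$-section of $\mu_{A/S}$ gives a section of $\mu_{A/R}$. I will choose whichever of the two is cleaner to write.

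For (3), the plan mimics Ford's proof via the centralizer functor (Proposition \ref{yypro3.10}). Given an exact sequence of $S^e$-smodules $0\to M'\to M\to M''\to 0$, tensor it with $A$ to obtain $A\sotimes_S M\sotimes_S A$-type $A^{e,R}$-smodules; exactness is preserved because $A$ is projective over $S$ by Lemma \ref{yylem5.3}(1). The aim is then to recover $M^S$ from $(A\sotimes_S M)^A$-type centralizers. The argument uses that $A$ is an $S$-sgenerator, so that $S$ is a direct summand of $A^{\oplus t'}$ (Lemma \ref{yylem5.3}(2)), together with the existence of an $S$-linear retraction $A\to S$. A more concrete version of the same plan is to take the separability idempotent $e$ of $A/R$ and an $S$-linear map $\phi:A^{\oplus t'}\to S$ with $\phi(1\text{-component})=1$, coming from the trace-type dual basis of the progenerator. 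One then applies $\phi\otimes\phi$ to the components of $e$ to produce an element of $(S\sotimes_R S)_0$, and checks that it is a separability idempotent of $S$. This last step is the main obstacle, and I would handle it levelwise using the classical argument at level $0$ and $s_0^n$-compatibility.
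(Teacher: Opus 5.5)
Your part (2) is the paper's argument. Your part (1) is correct but takes a different route: the paper deduces (1) from Proposition \ref{yypro3.16} by composing the separable enriched functors $\mathcal{U}_{S/R}\circ\mathcal{U}_{A/S}$. Your element $E=\sum_{i,j}x_iu_j\otimes_R v_jy_i$ is more elementary and can be checked completely. The map $x\otimes_S y\mapsto\sum_j xu_j\otimes_R v_jy$ is well defined because the separability idempotent $f$ of $S$ satisfies $(s\otimes 1)f=(1\otimes s)f$ and $S$ is central in $A$. This map is $A\sotimes_R A^o$-linear, and at level $n$ it carries $s_0^n(e)$ to $s_0^n(E)$. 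That gives $\mu_{A/R}(E)=1_{A_0}$ and the balancing identity at every level. In your alternative route, note that $A\sotimes_S(S\sotimes_R S)\sotimes_S A^o$ is $A\sotimes_R A^o$, not $A\sotimes_S A^o$; with that correction the composite of sections works.

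The genuine gap is in (3). Your first plan never explains how $M^S$ would be recovered from $A$-centralizers, which is the whole content. Your concrete plan fails as stated. Take an $S$-linear retraction $\phi$ with $\phi(1)=1$, write $e=\sum_i x_i\otimes y_i$, and set $f_\phi=\sum_i\phi(x_i)\otimes\phi(y_i)$. Then $f_\phi$ is $S$-balanced, since it is the image of $1$ under the $S^e$-linear composite $S\to A\to A^e\to S^e$. But $\mu_{S/R}(f_\phi)=\sum_i\phi(x_i)\phi(y_i)$ need not equal $\phi(\sum_i x_iy_i)=1$, nor even be a unit. For $R=S=\Z$, $A=\Z\times\Z$, $e=(1,0)\otimes(1,0)+(0,1)\otimes(0,1)$ and $\phi(a,b)=2a-b$, one gets $\mu_{S/R}(f_\phi)=5$. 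Moreover, the sgenerator condition only makes $S$ a summand of $A^{\oplus t'}$, not of $A$, so the retraction $\phi:A\to S$ you invoke is not justified.

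The missing idea is to use the basic-projective half of the hypothesis on the $S^e$-side:
\begin{itemize}
\item $A$ is a summand of $S^{\oplus m}$, so $A^e$ is a summand of $(S^e)^{\oplus m^2}$ and hence $S^e$-projective.
\item $A$ is an $A^e$-summand of $A^e$, hence $S^e$-projective.
\item Since $S$ is central, $S^e$ acts on $A$ through $\mu_{S/R}$. So the $S$-smodule splitting of $S$ off $A^{\oplus t'}$ is an $S^e$-splitting.
\item Therefore $S$ is $S^e$-projective, i.e.\ separable by Proposition \ref{yypro2.7}(1).
\end{itemize}
This is the paper's proof. An explicit idempotent can be extracted from it, but only after lifting the composites $A^e\xrightarrow{\mu_{A/R}}A\to S$ through $\mu_{S/R}:S^e\to S$, using that projectivity. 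Here $A\to S$ runs over the components of the splitting map $A^{\oplus t'}\to S$. Applying $\phi\otimes\phi$ to $e$ does not suffice.
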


\begin{proof}   
    (1) By Proposition \ref{yypro3.16},
    $\mathcal{U}_{S/R}:S\sMod \to R\sMod$ is a separable $R\sMod$-enriched functor and $\mathcal{U}_{A/S}: A\sMod \to S\sMod$ is a separable $S\sMod$-enriched functor.
    By restriction of scalars,
    $\mathcal{U}_{A/S}$ is a separable $R\sMod$-enriched functor.
    The composition of separable enriched functors is separable (for ordinary separable functors, see \cite[Lemma 1.1(2)]{NdBvO1989}).
    The assertion follows from Proposition \ref{yypro3.16} since $\mathcal{U}_{A/R}=\mathcal{U}_{S/R} \circ \mathcal{U}_{A/S}: A\sMod \to R\sMod$
    is a separable $R\sMod$-enriched functor. 

    (2) 
    Let $e\in (A\sotimes_R A^o)_0$ be a separability idempotent.
    Given the canonical sring morphism 
    $\pi: A\sotimes_R A^o \to A\sotimes_S A^o$, 
    define $\overline{e} := \pi(e).$
    Since $\mu_{A/R} = \mu_{A/S} \circ \pi,$
    we get $\mu_{A/S}(\overline{e}) = \mu_{A/S}(\pi(e))=\mu_{A/R}(e)=1_{A_0}.$
    Moreover,
    for every $a\in A_n$,
    \begin{align*}
        (a \otimes_S 1_{A_{n}}) s_0^n(\overline{e}) &= \pi((a \otimes_R 1_{A_{n}})s_0^n(e))\\
        &=\pi((1_{A_n} \otimes_R a)s_0^n(e))= (1_{A_n} \otimes_S a)s_0^n(e).
    \end{align*}
    So
    $\overline{e}$ is a separability idempotent of $A$ as $S$-salgebra.
    Then $A$ is separable over $S.$
   
    (3) 
    Let $A^e=A \sotimes_R A^{o}$ and $S^e = S \sotimes_R S.$
    Since $A$ is an $S$-sprogenerator,
    $A$ is $S$-basic projective and
    $A$ is a direct summand of some $ S^{\oplus m}$ as $S$-smodules by Lemma \ref{yylem5.3}(1).
    Since $A^{o}=A$ as $S$-smodules,
    the tensor product
    $A^e $ is a direct summand of $(S^e)^{\oplus m^2}$ as $S^e$-smodules.
    By Lemma \ref{yylem5.3}(1), 
    $A^e$ is $S^e$-basic projective.
    In particular, $A^e$ is $S^e$ projective by Lemma \ref{yylem2.4}.
    
    Since $A$ is separable over $R$,
    by Proposition \ref{yypro2.7}(2),
    $A$ is a direct summand of $A^e$ as left $A^e$-smodules.
    By restricting the action along $\eta_{A/S}: S \to A$,
    $A$ is a direct summand of $A^e$ as left $S^e$-smodules.
    So $A$ is $S^e$-projective (Lemma \ref{yylem2.4}). 

    Finally, since $A$ is an $S$-sgenerator,
    $S$ is a direct summand of some $A^{\oplus t}$ as $S$-smodules.
    Since $S$ is commutative and the $S$-action on $A$ is central,
    any $S$-smodule structure is equivalently the canonical left $S^e$-smodule structure.
    So $S$ is a direct summand of $A^{\oplus t}$ as $S^e$-smodules,
    which implies $S$ is a projective $S^e$-smodule (as $A$ is a projective $S^e$-smodule).
    So $S$ is separable over $R$ by Proposition \ref{yypro2.7}(1).
\end{proof}

\section{Azumaya Simplicial Algebras}
\label{yysec6}
We define Azumaya simplicial algebras and prove several structure theorems.

\subsection{Basics of Azumaya simplicial algebras}
\label{yysec6.1}

Let $R$ be a commutative sring.

\begin{definition}
\label{yydef6.1}
Let $A$ be an $R$-salgebra.
We say 
\begin{enumerate}
\item[(1)] 
$A$ is {\it central over $R$} 
if $R \cong Z(A)$ under the 
canonical sring morphism 
$\eta_{A/R}: R \to A.$
\item[(2)] $A$ is an 
{\it Azumaya $R$-salgebra}, or 
{\it Azumaya over $R$},
if $A$ is central separable over $R$.
\end{enumerate}
\end{definition}

\begin{remark}
\label{yyrem6.2}
Let $A$ and $B$ be Azumaya $R$-salgebras.
    \begin{enumerate}
        \item[(1)] Recall from Remark \ref{yyrem2.9}(3) that $A_n$ is separable over $R_n$.
        Then by Proposition \ref{yypro4.6}, $A_n$ is Azumaya over $R_n$ since $Z(A_n)=Z(A)_n\cong R_n$.

        \item[(2)] By \eqref{E3.10.1} and Remark \ref{yyrem3.8}(2),
        $R \cong  Z(A)= A^A = eA$ is a direct summand of $A$ as $R$-smodules where $e$ is a separability idempotent.

        \item[(3)] The tensor product $A\sotimes_R B$ is Azumaya over $R.$
        Indeed, by Lemma \ref{yylem4.2},
        $A \sotimes_R B$ is separable over $R\sotimes_R R \cong R$,
        and by Corollary \ref{yycor4.7}
        $$Z(A \sotimes_R B) = Z(A) \sotimes_R Z(B) \cong R\sotimes_R R \cong R.$$
    \end{enumerate}
\end{remark}

\begin{lemma}
\label{yylem6.3}
Let $A$ be an Azumaya $R$-salgebra.
\begin{enumerate}
\item[(1)] 
There is a one-to-one 
correspondence between sideals $I$ of $R$ and sideals $J$ of $A$ given by
        $I \mapsto IA$ and $J\mapsto J\cap R.$

        \item[(2)] If $M$ is a maximal sideal of $A$,
        then there exists a maximal sideal $I$ of $R$ such that $IA=M.$
    \end{enumerate}
    
\end{lemma}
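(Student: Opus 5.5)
We reduce to the classical correspondence levelwise and then check compatibility with the simplicial structure maps. By Remark \ref{yyrem6.2}(1), each $A_n$ is an Azumaya $R_n$-algebra. The classical theory \cite{Ford2017} then gives mutually inverse bijections between ideals of $R_n$ and two-sided ideals of $A_n$, namely $I_n\mapsto I_nA_n$ and $J_n\mapsto J_n\cap R_n$. Here $R_n$ is identified with its image $Z(A_n)=Z(A)_n$, using Proposition \ref{yypro4.6} and centrality.

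For part (1), the plan has three steps.

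First, we check that both maps produce sideals. If $I$ is an sideal of $R$, set $(IA)_n:=I_nA_n$. For $\theta:[k]\to[n]$ we have $\theta^*_A(I_nA_n)\subseteq \theta^*_R(I_n)\theta^*_A(A_n)\subseteq I_kA_k$, since $\eta_{A/R}$ is an sring morphism. So $IA$ is a simplicial subset, and it is levelwise a two-sided ideal because $I_n$ is central. If $J$ is an sideal of $A$, then $J\cap R$ (formed via $\eta_{A/R}$, which is injective since $R\cong Z(A)$) is closed under the structure maps, because both $J$ and $\eta_{A/R}(R)$ are. So $J\cap R$ is an sideal of $R$.

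Second, we verify that the two composites are the identity. This is levelwise: $(IA\cap R)_n=I_nA_n\cap R_n=I_n$ and $((J\cap R)A)_n=(J_n\cap R_n)A_n=J_n$ by the classical bijection. Equality of simplicial subsets is checked levelwise, so the two maps are mutually inverse.

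If one prefers not to cite the classical levelwise statement, the identities can be proved directly using the separability idempotent. Let $P$ be the projection of Lemma \ref{yylem4.5}, $P(a)=s_0^n(e)\cdot a$. Then $P(J)\subseteq J\cap Z(A)$, since $J$ is a two-sided sideal, and $P$ is $R$-linear with image $Z(A)\cong R$. Writing $a=\sum_i s_0^n(x_i)\,a\,s_0^n(y_i)$ is not quite the right expression; instead one uses that $A_n$ is Azumaya, so $A_n^e\cong \mathrm{End}_{R_n}(A_n)$ and $A_n$ is a progenerator. From this one gets $J_n=(J_n\cap R_n)A_n$. The step $J_n\subseteq (J_n\cap R_n)A_n$ is the main obstacle. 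I would simply invoke the classical result \cite[Theorem 4.3.?]{Ford2017}, or its Morita-theoretic proof, in each level.

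For part (2), let $M$ be a maximal sideal of $A$ and set $I=M\cap R$. By (1), $IA=M$. The bijection in (1) preserves inclusions in both directions, since $I\subseteq I'$ implies $IA\subseteq I'A$ and $J\subseteq J'$ implies $J\cap R\subseteq J'\cap R$. Hence it is an order isomorphism of the lattices of sideals, and it sends proper sideals to proper sideals, because $R$ corresponds to $A$. Therefore maximality of $M$ among proper sideals of $A$ transfers to maximality of $I$ among proper sideals of $R$, which proves (2).
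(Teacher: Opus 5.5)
Your proof is correct and follows the paper's route: you apply the classical ideal correspondence for the Azumaya $R_n$-algebras $A_n$ level by level (Remark \ref{yyrem6.2}(1); the paper cites Ford's Corollary 7.1.7), then transfer maximality through the inclusion-preserving bijection, and your explicit check that $IA$ and $J\cap R$ are sideals supplies a detail the paper leaves implicit. The unfinished aside about the separability idempotent, with its placeholder citation, is not needed and should simply be deleted.
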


\begin{proof}
    (1) It suffices to show that 
    $$IA \cap R = I \qquad (J\cap R)A = J$$
    which is equivalent to showing for every $n\geq 0$
    $$(IA )_n\cap R_n = I_n \qquad (J\cap R)_nA_n = J_n.$$
    Since $I_n$ is an ideal of $R_n$ and $J_n$ is an ideal of $A_n$, 
    the result then follows from \cite[Corollary 7.1.7]{Ford2017} 
    as $A_n$ is Azumaya over $R_n$ (Remark \ref{yyrem6.2}(1)).

    (2) Let $M$ be a maximal (proper) sideal of $A$ and
    set $I = M \cap R.$
    By part (1),
    $M=(M\cap R)A=IA$.
    If $I\subseteq I'\subseteq R$ for some sideal $I'$ of $R,$
    then 
    $$M= IA\subseteq I'A \subseteq A.$$
    But $M$ is maximal.
    So $I'A = M$ or $A.$
    This means 
    $$I'=I'A\cap R= M\cap R \text{ or } A\cap R = I \text{ or } R$$
    which shows $I$ is maximal.
\end{proof}

\subsection{Auslander-Goldman Theorem}

\begin{theorem}
\label{yythm6.4}
{\rm{(Unifying Theorem)}}
Let $A$ be an $R$-salgebra.
The following are equivalent.
\begin{enumerate}
\item[(1)] 
$A$ is Azumaya (central separable) over $R$.
\item[(2)] 
$A$ is an $A^e$-sprogenerator and $A$ is central over $R$.
\item[(3)] 
$A$ is an $R$-sprogenerator and the canonical sring morphism $A^e \to \shom_R(A,A)$ given by left $A^e$-actions is an isomorphism of $R$-salgebras.
\end{enumerate}
\end{theorem}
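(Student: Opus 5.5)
The plan is to prove (1)$\Leftrightarrow$(2) directly, using the projectivity and generator criteria of Lemma \ref{yylem5.3}. Then (2)$\Leftrightarrow$(3) should follow formally from Corollary \ref{yycor5.6}, applied to $A$ viewed as an $(A^e,R)$-sbimodule.

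\emph{(2)$\Rightarrow$(1).} If ${}_{A^e}A$ is an sprogenerator, it is in particular $A^e$-basic projective. By Lemma \ref{yylem5.3}(1), $A$ is then a direct summand of some $(A^e)^{\oplus t}$. Since $A^e$ is projective (Lemma \ref{yylem2.4}) and finite direct sums and summands of projectives are projective, $A$ is a projective left $A^e$-smodule. So $A$ is separable by Proposition \ref{yypro2.7}(1), and centrality is part of the hypothesis.

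\emph{(1)$\Rightarrow$(2).} Separability makes $A$ a direct summand of $A^e$ (Proposition \ref{yypro2.7}(2)), hence $A^e$-basic projective by Lemma \ref{yylem5.3}(1). The real content is that $A$ is an $A^e$-sgenerator. By Lemma \ref{yylem5.3}(2), or directly from the definition of $\mathfrak{a}$, it suffices to find finitely many $q_i\in \shom_{A^e}(A,A^e)_0=\Hom_{A^e\sMod}(A,A^e)$ and $p_i\in A_0$ with $\sum_i q_i(p_i)=1_{(A^e)_0}$. I will reduce this to degree $0$. By Remark \ref{yyrem6.2}(1), $A_0$ is an ordinary Azumaya $R_0$-algebra, with separability idempotent $e\in (A^e)_0=(A_0)^e$. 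Classically (Auslander--Goldman, cf.\ \cite{Ford2017}) $A_0$ is then a generator over $(A_0)^e$, which amounts to $(A_0)^e\, e\, (A_0)^e=(A_0)^e$. So we can write $1=\sum_i u_i e v_i$ with $u_i,v_i\in (A_0)^e$. Now define $q_i:A\to A^e$ by $a\mapsto (\text{lift of } a)\cdot s_0^n(e v_i)$, more precisely $q_i(x\cdot 1)=x\,s_0^n(e v_i)$ for $x\in (A^e)_n$. This is well defined because $J_{A/R}e=0$, and it is a $0$-morphism of left $A^e$-smodules. Taking $p_i=\mu(u_i)$ should give $\sum_i q_i(p_i)=\sum_i u_iev_i=1$, up to the left/right bookkeeping in the sbimodule structure of $Q$.

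I expect this step to be the main obstacle. One must check that the classical identity lives in degree $0$ and is propagated correctly by $s_0^n$. One must also make sure the $(A^e,A^e)$-bimodule conventions for $\mathfrak{a}$ match; if they do not, I would use $u_i$ and $v_i$ in swapped roles.

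\emph{(2)$\Leftrightarrow$(3).} Consider $P=A$ as an $(A^e,B)$-sbimodule, where $B$ is $R$ acting centrally. Corollary \ref{yycor5.6}, in its left-module form via Remark \ref{yyrem5.2}(1), says the following two conditions are equivalent:
\begin{enumerate}
\item[(a)] ${}_{A^e}A$ is an sprogenerator and the canonical map $R^o=R\to \shom_{A^e}(A,A)$ is an isomorphism;
\item[(b)] $A_R$ is an sprogenerator and $A^e\to \shom_R(A,A)$ is an isomorphism.
\end{enumerate}
Condition (b) is exactly (3). For (a), Lemma \ref{yylem3.9}(3) identifies $\shom_{A^e}(A,A)\cong A^A$ by evaluation at $1$, and Remark \ref{yyrem3.8}(2) gives $A^A=Z(A)$. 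One checks that this identification is an sring isomorphism under which the canonical map $R\to\shom_{A^e}(A,A)$ becomes $\eta_{A/R}:R\to Z(A)$. Hence the isomorphism condition in (a) is precisely centrality, and (a) is (2). This completes the cycle (1)$\Leftrightarrow$(2)$\Leftrightarrow$(3).
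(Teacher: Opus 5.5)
Your proof is correct. For (2)$\Rightarrow$(1) and (2)$\Leftrightarrow$(3) you follow the paper, which likewise applies Corollary~\ref{yycor5.6} and Theorem~\ref{yythm5.8} to ${}_{A^e}A_R$ and identifies $\shom_{A^e}(A,A)\cong A^A=Z(A)$.

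The genuine difference is in the sgenerator half of (1)$\Rightarrow$(2). The paper proves $A^eeA^e=A^e$ at the simplicial level, as follows:
\begin{itemize}
\item[(i)] Suppose a maximal sideal $M\supseteq A^eeA^e$ exists.
\item[(ii)] $A^e$ is Azumaya over $R$ by Remark~\ref{yyrem6.2}(3), which rests on Lemma~\ref{yylem4.2} and Corollary~\ref{yycor4.7}.
\item[(iii)] Hence $M=IA^e$ for a maximal sideal $I$ of $R$ by Lemma~\ref{yylem6.3}(2).
\item[(iv)] Then $1_{A_0}=\mu(e)\in IA$ forces $M=A^e$, a contradiction.
\end{itemize}
From $1=\sum u_iev_i$ the paper then builds the retraction $A^e\to(A^ee)^{\oplus m}\to A^e$ and uses $A\cong A^ee$.

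You instead notice that fullness of $e$ is purely a level-$0$ statement. The two-sided sideal generated by $e$ is levelwise $(A^e)_n s_0^n(e)(A^e)_n$, whose level-$0$ part is $(A_0)^e e (A_0)^e$. So it suffices to invoke the classical Auslander--Goldman theorem for the ordinary Azumaya $R_0$-algebra $A_0$, which is available via Remark~\ref{yyrem6.2}(1), i.e.\ Proposition~\ref{yypro4.6}. You then propagate by $s_0^n$ through your maps $q_i(x\cdot 1)=x\,s_0^n(ev_i)$, which are well defined because $J_{A/R}e=0$. With $p_i=\mu(u_i)$ you get $\sum_i q_i(p_i)=\sum_i u_iev_i=1_{(A^e)_0}$ exactly, so the left/right swap you worried about is not needed.

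Your route is shorter. It avoids Zorn's lemma, maximal sideals, Lemma~\ref{yylem6.3}, and the Azumaya property of $A^e$. It also makes visible the degree-$0$ rigidity that later surfaces in Proposition~\ref{yypro6.7}. The paper's route is essentially the classical maximal-ideal argument transcribed into sideals. It keeps the proof in simplicial language, though it too ultimately relies on classical levelwise facts: Ford's ideal correspondence and Bass's center formula.
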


\begin{proof}
(1) $\implies$ (2) Suppose $A$ is      Azumaya over $R$.
    Then $A$ is central over $R$.
    It remains to show $A$ is an $A^e$-sprogenerator.
    By Proposition \ref{yypro2.7}(2), $A$ is a direct summand of $A^e$ as left $A^e$-smodule,
    which means $A$ is $A^e$-basic projective (Lemma \ref{yylem5.3}(1)).
    So it remains to show $A$ is an $A^e$-sgenerator,
    which is equivalent to show $A^e$ is a direct summand of $A^{\oplus m}$.
    
    Let $e\in (A^e)_0$ be a separability idempotent. 
    We claim
    \begin{align*}
        A^eeA^e = A^e.
    \end{align*}
    If not, there is a maximal sideal $M$ where $A^eeA^e\subseteq M \subseteq A^e$.
    Note that $A^e$ is Azumaya over $R$ by Remark \ref{yyrem6.2}(3).
    Therefore, by Lemma \ref{yylem6.3}(2),
    $$M = IA^e$$
    for some maximal sideal $I\subseteq R.$
    It follows that
    $$1_{A_0} = \mu_{A/R}(e)\in \mu(IA^e)\subseteq IA.$$
    Therefore, $IA = A$.
    It follows that 
    $$A^e = A \sotimes_R A^{o} =(IA)\sotimes_R A^{o} = I(A \sotimes_R A^{o} )=IA^e= M.$$
    It contradicts that $M$ is a proper sideal.
    Thus $A^eeA^e= A^e$ and $e$ is a full idempotent.
    Then, $A^ee$ is an $A^e$-sprogenerator (for ordinary rings and modules, see \cite[Remark 18.10(D)]{Lam1999}).
    Specifically,
    there exist finitely many $u_i, v_i\in (A^e)_0$ such that 
     \begin{align*}
         1_{(A^e)_0} = \sum_{i=1}^m u_iev_i.
    \end{align*}
     Define left $A^e$-smodule morphisms
     \begin{align*}
         &h: A^e \to (A^ee)^{\oplus m} && x\mapsto (xs_0^n(u_1)s_0^n(e), \dots, xs_0^n(u_m)s_0^n(e))\\
         & f: (A^ee)^{\oplus m}  \to A^e && (y_1,\dots, y_m) \mapsto \sum_{i=1}^m y_is_0^n(v_i)
     \end{align*}
     Then $fh=Id_{A^e}$ which implies 
    $A^e$ is a direct summand of $(A^ee)^{\oplus m}.$
     
     Since $e$ is an idempotent,
     $A^e\cong A^ee\oplus A^e(1_{(A^e)_0}-e)$ as left $A^e$-smodules.
     Recall $\mu_{A/R}: A^e \to A$ is surjective with kernel $J_{A/R}=A^e(1_{(A^e)_0}-e)$ (Proposition \ref{yypro2.7}).
     Therefore,
     $$A \cong A^e/J_{A/R} \cong A^e/A^e(1_{(A^e)_0}-e)\cong A^ee$$ as left $A^e$-smodules.
     It follows $A^e$ is a direct summand of $(A^ee)^{\oplus m}\cong A^{\oplus m}$.
     Hence $A$ is an $A^e$-sprogenerator.

      (2) $\implies$ (1) 
    Suppose $A$ is an $A^e$-sprogenerator and central over $R$.
    Then $A$ is a projective left $A^e$-smodule (Lemma \ref{yylem5.3}(1) and Lemma \ref{yylem2.4})
    which implies $A$ is separable over $R$ by Proposition \ref{yypro2.7}.
    The assertion follows.

    (2) $\iff$ (3) Note that $A$ being central over $R$ is equivalent to $$ R\cong  Z(A) = A^A\cong \shom_{A^e}(A,A).$$
    Applying Theorem \ref{yythm5.8} and Corollary \ref{yycor5.6} to ${}_{A^e}A_R$,
    both conditions (2) and (3) are equivalent to 
    that $R$ being simplicial Morita equivalent to $A^e.$
\end{proof}

\begin{remark}
\label{yyrem6.5}
By Theorem \ref{yythm6.4},
$A$ is Azumaya over $R$ if and only if $A^e=A\sotimes_R A^{o}$ is simplicial Morita equivalent to $R.$
So the central-separable approach agrees with the Morita-theoretical approach when defining Azumaya simplicial algebras.
Therefore,
Theorem \ref{yythm6.4} answers the remaining questions of Pareigis \cite[p.~121]{PareigisIV} and Borceux and Vitale \cite[p.~458]{BV2002} in the setting of simplicial algebras without assuming the extra conditions as Van Oystaeyen and Y. Zhang did in \cite[§4]{VOZ1998}.
\end{remark}

\begin{theorem}
\label{yythm6.6}
    An $R$-salgebra $A$ is separable if and only if $A$ is Azumaya over $Z(A)$ and $Z(A)$ is separable over $R$.
\end{theorem}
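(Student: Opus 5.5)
The plan is to use Proposition \ref{yypro5.10}, with $S=Z(A)$. First note that $Z(A)$ is a commutative sub sring of $A$ (Lemma \ref{yylem1.6}). It contains $\eta_{A/R}(R)$ by Definition \ref{yydef1.9}, so $Z(A)$ is a commutative $R$-salgebra. Moreover $A$ is a $Z(A)$-salgebra via the inclusion $Z(A)\hookrightarrow A$, whose image is trivially in $Z(A)$. Centrality of $A$ over $Z(A)$ is automatic: the canonical map $Z(A)\to Z(A)$ is the identity.

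($\impliedby$) Suppose $A$ is Azumaya over $Z(A)$ and $Z(A)$ is separable over $R$. Then $A$ is separable over $Z(A)$. Proposition \ref{yypro5.10}(1) with $S=Z(A)$ gives that $A$ is separable over $R$.

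($\implies$) Suppose $A$ is separable over $R$.
\begin{enumerate}
\item[(a)] Proposition \ref{yypro5.10}(2) with $S=Z(A)$ shows $A$ is separable over $Z(A)$. Together with the automatic centrality, $A$ is Azumaya over $Z(A)$.
\item[(b)] To get separability of $Z(A)$ over $R$, the natural choice is Proposition \ref{yypro5.10}(3). This requires $A$ to be a $Z(A)$-sprogenerator. Since $A$ is Azumaya over $Z(A)$, Theorem \ref{yythm6.4} ((1)$\implies$(3)) applied over the base $Z(A)$ gives exactly this. Proposition \ref{yypro5.10}(3) then yields that $Z(A)$ is separable over $R$.
\end{enumerate}

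A more direct alternative for (b) avoids the Morita machinery. By Lemma \ref{yylem4.4}-style reasoning, or directly, apply the exact functor $(-)^A$ of Proposition \ref{yypro3.10}. Alternatively, use the projection $P$ of Lemma \ref{yylem4.5}, which is a $Z(A)$-bilinear retraction of $A$ onto $Z(A)$. This exhibits $Z(A)$ as a direct summand of $A$ as $Z(A)^e$-smodules. Since $A$ is projective over $A^e$, it is projective over the restricted $Z(A)^e$. One could also push a separability idempotent of $A$ through $P\otimes P$ and argue that $Z(A)$ is a summand of $A$, which is projective over $Z(A)\sotimes_R Z(A)$. I would keep this only as a backup.

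The main obstacle is the hypothesis check in (b). Theorem \ref{yythm6.4} must legitimately be applied with the base sring $Z(A)$ in place of $R$. This is fine, since that theorem holds for an arbitrary commutative base sring. One must also confirm that the $Z(A)$-sprogenerator property matches what the proof of Proposition \ref{yypro5.10}(3) uses, namely that $A$ is a summand of $Z(A)^{\oplus m}$ and $Z(A)$ is a summand of $A^{\oplus t}$ (Lemma \ref{yylem5.3}). The second summand condition is also visible directly from Remark \ref{yyrem6.2}(2) with $t=1$.
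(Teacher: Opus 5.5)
Your main argument is exactly the paper's proof. The direction ($\impliedby$) is Proposition~\ref{yypro5.10}(1) with $S=Z(A)$, and ($\implies$) uses Proposition~\ref{yypro5.10}(2), then Theorem~\ref{yythm6.4} over the base $Z(A)$ to make $A$ a $Z(A)$-sprogenerator, and finally Proposition~\ref{yypro5.10}(3).

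Discard the ``backup'' route, though, because it does not avoid Theorem~\ref{yythm6.4}. Restricting along $Z(A)\sotimes_R Z(A)\to A^e$ preserves projectivity only if $A^e$ is projective over $Z(A)\sotimes_R Z(A)$. That is precisely what the basic projectivity of $A$ over $Z(A)$, i.e.\ Theorem~\ref{yythm6.4}, supplies inside the proof of Proposition~\ref{yypro5.10}(3). Pushing $e$ through $P\otimes P$ also fails in general, since it need not satisfy $\mu=1$: for $M_n(\Bbbk)$ with $n$ invertible and $e=\frac1n\sum E_{ij}\otimes E_{ji}$ it gives $n^{-2}$.
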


\begin{proof}
    ($\implies$) Suppose $A$ is separable over $R$.
    Then $A$ is separable over $Z(A)$ by Proposition \ref{yypro5.10}(2), and therefore $A$ is Azumaya over $Z(A).$
    By Theorem \ref{yythm6.4},
    $A$ is a $Z(A)$-sprogenerator. 
    It follows that $Z(A)$ is separable over $R$ by Proposition \ref{yypro5.10}(3).

    ($\impliedby$) This is Proposition \ref{yypro5.10}(1).
\end{proof}

\subsection{Classification type results}
\label{yysec6.3}

\begin{proposition}
\label{yypro6.7}
Let $A$ be an $R$-salgebra.
Then $A$ is Azumaya over $R$ if and only if $A \cong R \sotimes_{R_0} A_0$ as $R$-salgebras and $A_0$ is Azumaya over $R_0$.
\end{proposition}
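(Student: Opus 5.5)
The plan is to prove the two directions separately. For ($\impliedby$) I would use base change along the constant sring $R_0\Delta^0$. For ($\implies$) I would show that the canonical comparison map $\phi: R\sotimes_{R_0}A_0\to A$, $r\otimes a\mapsto r s_0^n(a)$, is a levelwise isomorphism. The key input there is the projection $P$ of Lemma \ref{yylem4.5}.

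($\impliedby$) Suppose $A_0$ is Azumaya over $R_0$. The constant salgebra $A_0\Delta^0$ over $R_0\Delta^0$ is separable, since a separability idempotent of $A_0$ works at every level, exactly as in the proof of Theorem \ref{yythm2.11}. Via the degeneracies $s_0^n: R_0\to R_n$, $R$ is a commutative $R_0\Delta^0$-salgebra, and $R\sotimes_{R_0\Delta^0}A_0\Delta^0$ has level $n$ equal to $R_n\otimes_{R_0}A_0$, i.e.\ it is $R\sotimes_{R_0}A_0$. Lemma \ref{yylem4.1} then gives separability over $R$. For centrality, Proposition \ref{yypro4.6} lets us compute the center levelwise. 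Classically, the center of the base change $R_n\otimes_{R_0}A_0$ of an Azumaya algebra is $R_n$, so $Z(R\sotimes_{R_0}A_0)=R$.

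($\implies$) Suppose $A$ is Azumaya over $R$. By Remark \ref{yyrem6.2}(1), $A_0$ is Azumaya over $R_0$. Since $\eta_{A/R}(R)\subseteq Z(A)$ and the $s_0^n$ are ring maps commuting with the structure maps, $\phi$ is a morphism of $R$-salgebras. It suffices to show each $\phi_n: R_n\otimes_{R_0}A_0\to A_n$ is bijective.

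Injectivity: by the first direction, $R_n\otimes_{R_0}A_0$ is Azumaya over $R_n$. Its ideal $\ker\phi_n$ is therefore generated by $\ker\phi_n\cap R_n$ \cite[Corollary 7.1.7]{Ford2017}, and this intersection is $0$ because $R_n\to A_n$ is injective (centrality).

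Surjectivity: let $B_n={\rm im}(\phi_n)=R_n s_0^n(A_0)$, an Azumaya $R_n$-subalgebra of $A_n$ containing $R_n$. I first compute its centralizer $C_n$ in $A_n$. Write the separability idempotent as $e=\sum x_i\otimes y_i\in(A^e)_0$. For $x\in C_n$, $x$ commutes with every $s_0^n(x_i)$, so
\[P(x)=s_0^n(e)\cdot x=\sum s_0^n(x_i)\,x\,s_0^n(y_i)=x\sum s_0^n(x_i)s_0^n(y_i)=x\,\mu(s_0^n(e))=x.\]
By Lemma \ref{yylem4.5} and centrality, ${\rm im}(P_n)=Z(A_n)=R_n$, so $C_n=R_n$. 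I would then invoke the classical commutator (double centralizer) theorem for an Azumaya subalgebra $B_n$ of an Azumaya $R_n$-algebra $A_n$ with the same center: $A_n\cong B_n\otimes_{R_n}C_n=B_n$. This gives $\phi_n$ surjective, so $\phi$ is an isomorphism of $R$-salgebras.

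The main obstacle is this last step, namely citing or reproving the correct form of the commutator theorem over a general commutative ring $R_n$. If that is awkward, I would fall back on Theorem \ref{yythm6.4}(3). The idea is to show $A_n$ is a $B_n^e$-progenerator with $\Hom_{B_n^e}(A_n,A_n)\cong C_n=R_n$. Morita theory then identifies $A_n\cong B_n\otimes_{R_n}\Hom_{B_n^e}(B_n,A_n)$, and $\Hom_{B_n^e}(B_n,A_n)$ is the centralizer $C_n=R_n$.
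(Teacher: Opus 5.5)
Your proposal is correct. For ($\Leftarrow$) it is the paper's argument: Lemma \ref{yylem4.1}, then Proposition \ref{yypro4.6}, then classical base change of Azumaya algebras.

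For ($\Rightarrow$) you take a genuinely different route to the same conclusion, namely that the comparison map $r\otimes a\mapsto r s_0^n(a)$ is an isomorphism.

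\textbf{The paper's route.} It first invokes the Unifying Theorem \ref{yythm6.4} to know that $A$ is an $R$-sprogenerator. It then takes a retraction $A\xrightarrow{\jmath}R^{\oplus m}\xrightarrow{\pi}A$ from Lemma \ref{yylem5.3} and writes down the explicit inverse $g(x)=\sum_j \jmath(x)_j\otimes_{R_0}\pi(u_j)$. The underlying point is that a retract of a finite free $R$-smodule is cut out by an idempotent matrix over $R_0$, so it is extended from degree $0$. The argument is purely about the underlying $R$-smodule.

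\textbf{Your route.} You never use the progenerator property.
\begin{itemize}
\item Injectivity comes from the ideal correspondence in the Azumaya $R_n$-algebra $R_n\otimes_{R_0}A_0$, together with injectivity of $\eta$.
\item Surjectivity comes from a neat observation. Because the separability idempotent lives in degree $0$, anything commuting with $s_0^n(A_0)$ is fixed by $P$, so it lies in $Z(A)_n=R_n$.
\item The classical commutator theorem then finishes: $A_n\cong B_n\otimes_{R_n}A_n^{B_n}$ for a central separable subalgebra $B_n$ of an arbitrary $R_n$-algebra $A_n$.
\end{itemize}

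\textbf{Comparison.} Your argument makes the proposition independent of Theorem \ref{yythm6.4} and of the enriched Morita theory of Section \ref{yysec5}. It needs only Lemma \ref{yylem4.5}/Proposition \ref{yypro4.6} plus three standard facts about ordinary Azumaya algebras. The paper's argument is more elementary once Theorem \ref{yythm6.4} is available, and it produces an explicit inverse.

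\textbf{A small correction to your fallback.}
\begin{itemize}
\item The relevant progenerator is $B_n$ over $B_n^e$, which is just the Azumaya property of $B_n$; it is not $A_n$.
\item It is $\Hom_{B_n^e}(B_n,A_n)$, not $\Hom_{B_n^e}(A_n,A_n)$, that equals the centralizer $C_n$.
\end{itemize}
Your final identification $A_n\cong B_n\otimes_{R_n}\Hom_{B_n^e}(B_n,A_n)$ is the right one. It is exactly the proof of the commutator theorem, so you can simply cite that theorem.
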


\begin{proof}
    ($\implies$) Since $A$ is Azumaya over $R,$
    $A$ is $R$-basic projective by Theorem \ref{yythm6.4}.
    Therefore,
    we have a retraction of $R$-smodules
    \begin{align*}
        A \xrightarrow[]{\jmath} R^{\oplus m} \xrightarrow[]{\pi} A.
    \end{align*}
    Note $R$, $A_0$ (viewed as a constant sring), and $ A$ are $R_0$-salgebras. 
    Let $\eta: R\to A$ and $\iota: A_0 \to A$ be the canonical $R_0$-salgebra morphisms. 
    Define a simplicial map
    \begin{align}
    \label{E6.7.1}
    \tag{E6.7.1}
        f :\quad R \sotimes_{R_0} A_0 \xrightarrow[]{\eta \sotimes_{R_0} \iota} A \sotimes_{R_0} A \xrightarrow[]{\mu_A} A.
    \end{align}
    Then $f$ is an $R$-salgebra morphism since
    \begin{align*}
        f((r_1\otimes_{R_0} a_1)(r_2\otimes_{R_0} a_2)) &= \eta(r_1 r_2)s_0^n(a_1a_2)\\
        &=\eta(r_1)s_0^n(a_1)\eta(r_2)s_0^n(a_2)\\
        &=f(r_1\otimes_{R_0} a_1) f(r_2\otimes_{R_0} a_2)
    \end{align*}
    for every $r_1\otimes a_1, r_2\otimes a_2\in (R\sotimes_{R_0} A_0 )_n$, and clearly
    \begin{align*}
        f(1_{R_n} \otimes_{R_0} 1_{A_0}) = 1_{R_n} s_0^n(1_{A_0}) = 1_{A_n}.
    \end{align*}
    In particular,
    $f$ is an $R$-smodule morphism.
    
    Let $u_1, \dots, u_m$ be standard basis vectors of $R^{\oplus m}_0.$
    Set $a_j= \pi(u_j)\in A_0.$
    Then
    \begin{align*}
        \pi(r_1, \dots, r_m) = \sum_{j=1}^m \eta(r_j)s_0^n(a_j)
    \end{align*}
    for any $(r_1,\dots, r_m)\in (R^{\oplus m})_n.$ Since $\pi \jmath= Id_A,$
    for every $x\in A_n$
    \begin{align}
    \label{E6.7.2}
    \tag{E6.7.2}
        x = \pi (\jmath(x)_1, \dots, \jmath(x)_m) =\sum_{j=1}^m  \eta(\jmath(x)_j) s_0^n(a_j).
    \end{align}
    Define another morphism
    \begin{align}
    \label{E6.7.3}
    \tag{E6.7.3}
        g: A \to R\sotimes_{R_0} A_0 \qquad x\mapsto\sum_{j=1}^m  \jmath(x)_j \otimes_{R_0} a_j
    \end{align}
   for every $x\in A_n$.
   We claim $g$ is an $R$-smodule morphism.
   
   Indeed, for any $\theta:[k] \to [n]$ in $\Delta$ 
    \begin{align*}
        \theta^*(g(x)) &= \sum_{j=1}^m  \theta^*(\jmath(x)_j)\otimes_{R_0} a_j =\sum_{j=1}^m  \jmath(\theta^*(x)\big)_j \otimes_{R_0} a_j =g(\theta^*(x))
    \end{align*}
    which implies $g$ is a simplicial map.
    Furthermore,
    for every $r\in R_n,$
    \begin{align*}
        g(rx) &= \sum_{j=1}^m \jmath(rx)_j \otimes_{R_0} a_j =r(\sum_{j=1}^m  \jmath(x)_j \otimes_{R_0} a_j)= rg(x)
    \end{align*}
    which shows $g$ is an $R$-smodule morphism.

    We show $f$ and $g$ are $R$-smodule isomorphisms.
    Indeed, for every $x\in A_n$
    \begin{align*}
        fg(x) &= f (\sum_{j=1}^m  \jmath(x)_j \otimes_{R_0} a_j) &&\text{by \ref{E6.7.1}}\\
        &=\sum_{j=1}^m \eta(\jmath(x)_j)s_0^n(a_j)=x. &&\text{by \ref{E6.7.2}}
    \end{align*}
    Therefore, $fg= Id_A$.

    Conversely,
    for every $r\otimes_{R_0} a\in (R\sotimes_{R_0} A_0)_n$,
    \begin{align*}
        gf(r\otimes_{R_0} a) &= g(rs_0^n(a)) = rg(s_0^n(a)) && \text{as $g$ is an $R$-smodule morphism}\\
        &=r\sum_{j=1}^m s_0^n(\jmath(a)_j) \otimes_{R_0} a_j && \text{by \ref{E6.7.3}}\\
        &=r \otimes_{R_0} \sum_{j=1}^m  \jmath(a)_j a_j && \text{since $R_n$ is an $R_0$-module}\\
        &=r \otimes_{R_0} \pi \jmath(a) = r\otimes a.
    \end{align*}
    Then $gf=Id_{R \sotimes_{R_0} A_0}$.
    Therefore, $f$ is an $R$-salgebra isomorphism.
    
    Lastly, we note that $A_0$ is Azumaya over $R_0$ by Remark \ref{yyrem6.2}(1).

    ($\impliedby$) Since $A_0$ is separable over $R_0$,
    we note that $A \cong 
    R \sotimes_{R_0} A_0$  is separable over $R$ by Lemma \ref{yylem4.1}.
    Moreover, as $A_0$ is Azumaya over $R_0$,
    then
    $A_n\cong R_n \otimes_{R_0} A_0 $ is also Azumaya over $R_n$ \cite[Corollary 7.1.9]{Ford2017}.
    Therefore, by Proposition \ref{yypro4.6},
    $$Z(A)_n=Z(A_n)\cong R_n.$$
    Then $Z(A)\cong R$ and $A$ is Azumaya over $R.$
    \end{proof}

\begin{corollary}
\label{yycor6.8}
Let $A$ be an 
$R$-salgebra.
The following are equivalent.
\begin{enumerate}
\item[(1)] 
$A$ is separable over $R$.
\item[(2)]  $Z(A)$ is separable over $R$, 
$A_0$ is an Azumaya $Z(A)_0$-algebra, and
$$A\cong Z(A) \sotimes_{Z(A)_0} A_0$$ as $Z(A)$-salgebras.
\end{enumerate} 
\end{corollary}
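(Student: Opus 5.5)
The plan is to combine Theorem \ref{yythm6.6} with Proposition \ref{yypro6.7}, applied over the base $Z(A)$ instead of $R$. Throughout, $Z(A)$ is a commutative sub sring of $A$ (Lemma \ref{yylem1.6}) that contains $\eta_{A/R}(R)$, so $Z(A)$ is a commutative $R$-salgebra and $A$ is a $Z(A)$-salgebra. This is the setting needed for both results.

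For (1) $\implies$ (2), assume $A$ is separable over $R$. By Theorem \ref{yythm6.6}, $Z(A)$ is separable over $R$ and $A$ is Azumaya over $Z(A)$. Next apply Proposition \ref{yypro6.7} with $Z(A)$ in the role of $R$. It gives $A\cong Z(A)\sotimes_{Z(A)_0} A_0$ as $Z(A)$-salgebras, and it shows that $A_0$ is Azumaya over $Z(A)_0$. This is exactly condition (2).

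For (2) $\implies$ (1), assume (2). Proposition \ref{yypro6.7}, again over the base $Z(A)$, converts the isomorphism $A\cong Z(A)\sotimes_{Z(A)_0}A_0$ and the hypothesis that $A_0$ is Azumaya over $Z(A)_0$ into the statement that $A$ is Azumaya over $Z(A)$. Since $Z(A)$ is separable over $R$ by hypothesis, Theorem \ref{yythm6.6} (equivalently Proposition \ref{yypro5.10}(1)) shows that $A$ is separable over $R$.

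The only step needing care is to check that Proposition \ref{yypro6.7} genuinely applies with base $Z(A)$. First, the structure map $Z(A)\to A$ is the inclusion, so $A$ is central over $Z(A)$ tautologically. Second, $Z(A)_0$ really is the degree-zero part used in Proposition \ref{yypro6.7}. Third, in the direction (2) $\implies$ (1), the isomorphism in (2) is assumed as $Z(A)$-salgebras, which is precisely the form Proposition \ref{yypro6.7} requires. With these checks the corollary follows formally.
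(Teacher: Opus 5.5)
Your proof is correct and follows essentially the paper's argument: Theorem \ref{yythm6.6} followed by Proposition \ref{yypro6.7} over the base $Z(A)$ for (1)$\implies$(2), and Proposition \ref{yypro5.10}(1) for the converse. The only difference is in (2)$\implies$(1). There the paper obtains separability of $A$ over $Z(A)$ directly from Lemma \ref{yylem4.1}, instead of going through the reverse direction of Proposition \ref{yypro6.7}. Your detour is harmless, since that direction itself rests on Lemma \ref{yylem4.1}; it just also proves centrality, which you don't need.
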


\begin{proof}
        ($\implies$) If $A$ is separable over $R$,
        then $A$ is Azumaya over $Z(A)$ and $Z(A)$ is separable over $R$ by Theorem \ref{yythm6.6}.
        By Proposition \ref{yypro6.7}
        $A \cong Z(A) \sotimes_{Z(A)_0} A_0$ as $Z(A)$-salgebras.
        Moreover, $A_0$ is Azumaya over $Z(A_0)$ which is $Z(A)_0$ by Proposition \ref{yypro4.6}.

        ($\impliedby$) Since $A_0$ is separable over $Z(A)_0$,
        by Lemma \ref{yylem4.1},
        $A \cong Z(A)\sotimes_{Z(A)_0} A_0$ is separable over $Z(A)$.
        But $Z(A)$ is separable over $R.$
        Therefore, $A$ is separable over $R$ by Proposition \ref{yypro5.10}(1).
    \end{proof}
   
\begin{remark}
\label{yyrem6.9}
By Corollary \ref{yycor6.8}, the data of a separable $R$-salgebra consists of 
\begin{enumerate}
    \item[(1)]  a commutative separable $R$-salgebra (the center $Z(A)$).

    \item[(2)] an ordinary Azumaya $Z(A)_0$-algebra (the algebra $A_0$).
\end{enumerate} 
So to classify separable $R$-salgebras, 
we start with classifying commutative separable $R$-salgebras (those arise as the center) and ordinary Azumaya $Z(A)_0$-algebras.
The latter is controlled by the Brauer group $Br(Z(A)_0)$ up to Morita equivalence.
\end{remark}

Which commutative $R$-salgebras can arise as center of a separable $R$-salgebra?
They are precisely the commutative separable ones.

\begin{lemma}
\label{yylem6.10}
    Let $C$ be a commutative $R$-salgebra.
    Then $C\cong Z(A)$ for some separable $R$-salgebra $A$ if and only if $C$ is separable over $R.$
\end{lemma}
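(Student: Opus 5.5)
The plan splits into the two directions of the equivalence.

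For the forward direction, suppose $C\cong Z(A)$ for some separable $R$-salgebra $A$. Theorem \ref{yythm6.6} says directly that $Z(A)$ is separable over $R$. Separability is preserved under isomorphism of $R$-salgebras, since an isomorphism carries a separability idempotent to a separability idempotent. Hence $C$ is separable over $R$.

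For the converse, suppose $C$ is a commutative separable $R$-salgebra. The natural first choice is $A=C$ itself. Since $C$ is commutative, $Z(C)=C$; this was noted right after Lemma \ref{yylem1.6}. So $C=Z(A)$ with $A$ separable over $R$, and we are done.

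If one wants a non-trivial witness, for instance a noncommutative $A$, take $A=M_k(C)$. Proposition \ref{yypro2.16} shows $M_k(C)$ is separable over $C$. Proposition \ref{yypro5.10}(1) then makes it separable over $R$. Proposition \ref{yypro4.6} gives $Z(M_k(C))_n=Z(M_k(C_n))=C_nI_k$, so $Z(M_k(C))\cong C$.

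There is no real obstacle here. The only points needing care are that isomorphism preserves separability and that the center of a commutative sring is the sring itself; both are immediate from the definitions.
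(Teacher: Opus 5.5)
Your proposal is correct and follows the paper's proof: the forward direction is exactly Theorem~\ref{yythm6.6} applied to $Z(A)$, and the converse takes $A=C$ and uses $Z(C)=C$. Your optional witness $A=M_k(C)$, obtained via Propositions~\ref{yypro2.16}, \ref{yypro5.10}(1) and \ref{yypro4.6}, is also valid but not needed.
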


\begin{proof}
    ($\implies$) Suppose $A$ is separable over $R$ and $C\cong Z(A).$
    Then $Z(A)$ is separable over $R$ by Theorem \ref{yythm6.6}.

    ($\impliedby$) If $C$ is commutative and separable over $R$,
    then $C= Z(C).$
\end{proof}

This leads to the problem of classifying commutative separable $R$-salgebras.

\begin{theorem}
\label{yythm6.11}
    Let $C$ be a commutative $R$-salgebra.
    The following are equivalent.
    
    \begin{enumerate}
        \item[(1)] $C$ is separable over $R$.
        
        \item[(2)] $C_0$ is separable over $R_0$ and the canonical sring morphism
        $$\phi: R \sotimes_{R_0} C_0 \to C \qquad  r\otimes c \mapsto rs_0^n(c)$$
        for every $r\otimes c 
        \in (R\sotimes_{R_0} C_0)_n$
        is a ring epimorphism levelwise.
    \end{enumerate}
    In particular, when $C_n$ is a finitely generated $R_n$-module for every $n$,
        $C$ is separable exactly when 
        $C \cong (R \sotimes_{R_0} T)/J$
        where $T$ is a commutative separable $R_0$-algebra, finitely generated as $R_0$-module, 
        and $J$ is an sideal of $R \sotimes_{R_0} T$ with $J_0=0.$ 
\end{theorem}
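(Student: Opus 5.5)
The plan has three parts: prove (1) $\Rightarrow$ (2), prove (2) $\Rightarrow$ (1), and then read off the finitely generated case.

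\emph{(1) $\Rightarrow$ (2).} Let $e\in (C^e)_0$ be the (unique, by Remark \ref{yyrem2.9}(1)) separability idempotent. By Remark \ref{yyrem2.9}(3), $e$ makes $C_0$ separable over $R_0$.

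For the epimorphism claim, fix $n$ and write $\phi_n\colon R_n\otimes_{R_0}C_0\to C_n$. Recall that a ring map $u\colon S\to T$ is an epimorphism exactly when $T\otimes_S T\to T$ is an isomorphism, i.e.\ when $t\otimes 1=1\otimes t$ in $T\otimes_S T$ for all $t\in T$. With $S=R_n\otimes_{R_0}C_0$ we have $C_n\otimes_S C_n = C_n\otimes_{R_n}C_n\,/\,(s_0^n(c)\otimes 1-1\otimes s_0^n(c))$, so the image of $s_0^n(e)$ there is still a separability idempotent.

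The key observation is that $e=\sum x_i\otimes y_i$ with $x_i,y_i\in C_0$. Hence in $C_n\otimes_S C_n$ we get
\[
s_0^n(e)=\sum s_0^n(x_i)\otimes s_0^n(y_i)=\sum s_0^n(x_iy_i)\otimes 1 = 1\otimes 1 .
\]
Then for every $t\in C_n$,
\[
t\otimes 1=(t\otimes 1)(1\otimes 1)=(t\otimes1)s_0^n(e)=(1\otimes t)s_0^n(e)=1\otimes t,
\]
so $\phi_n$ is an epimorphism.

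\emph{(2) $\Rightarrow$ (1).} By Lemma \ref{yylem4.1}, $R\sotimes_{R_0}C_0$ is separable over $R$. Its separability idempotent is $e_0=\sum x_i\otimes y_i\in (C_0\otimes_{R_0}C_0)_0$, which is a separability idempotent of $C_0$. I will push $e_0$ into $(C\sotimes_R C)_0=C_0\otimes_{R_0}C_0$, where it is literally the same element; it satisfies $\mu(e_0)=1$.

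It remains to show $(t\otimes 1)s_0^n(e_0)=(1\otimes t)s_0^n(e_0)$ in $C_n\otimes_{R_n}C_n$ for all $t\in C_n$. I expect this to be the main obstacle, because $t$ need not lie in the image of $\phi_n$. The plan is as follows. The two ring maps $C_n\rightrightarrows C_n\otimes_{R_n}C_n\,s_0^n(e_0)$, given by $t\mapsto (t\otimes1)s_0^n(e_0)$ and $t\mapsto (1\otimes t)s_0^n(e_0)$, agree on the image of $R_n\otimes_{R_0}C_0$. This is because $e_0$ is a separability idempotent for $C_0$ over $R_0$ and elements of $R_n$ pass across $\otimes_{R_n}$. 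Both maps are ring homomorphisms into the corner ring $(C_n\otimes C_n)s_0^n(e_0)$, whose unit is $s_0^n(e_0)$; the idempotent is central since everything is commutative. Since $\phi_n$ is an epimorphism of rings, the two maps agree on all of $C_n$. Thus $e_0$ is a separability idempotent and $C$ is separable.

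\emph{The finitely generated case.} Suppose $C$ is separable with each $C_n$ finitely generated. Then each $\phi_n$ is a finite epimorphism, hence surjective: a finite ring epimorphism is surjective, by the standard Nakayama-type argument, since $C_n\otimes_S C_n\cong C_n$ forces $C_n/S$ to vanish locally. Take $T=C_0$, which is finitely generated separable over $R_0$, and $J=\ker\phi$. Then $J_0=\ker(\phi_0)=0$, since $\phi_0$ is the identity of $C_0$.

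Conversely, given $C\cong (R\sotimes_{R_0}T)/J$, this is separable over $R$ by Lemma \ref{yylem4.1} and Lemma \ref{yylem4.4}.
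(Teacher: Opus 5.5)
Your proof is correct. The implication (1) $\Rightarrow$ (2) and the finitely generated addendum follow the paper essentially verbatim: the image of $s_0^n(e)$ in $C_n\otimes_{A_n}C_n$ becomes $1\otimes 1$, the tensor criterion then gives the epimorphism, finite epimorphisms are surjective, and one takes $T=C_0$ and $J=\ker\phi$. You genuinely diverge from the paper in (2) $\Rightarrow$ (1).

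The paper argues structurally. Each $\phi_n$ is an epimorphism, so $\mu_{C/A}\colon C\sotimes_A C\to C$ is an isomorphism, and hence $C$ is separable over $A=R\sotimes_{R_0}C_0$ with trivial splitting. Next, $A$ is separable over $R$ by Lemma \ref{yylem4.1}. Transitivity, Proposition \ref{yypro5.10}(1), then gives that $C$ is separable over $R$.

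You instead verify directly that the level-zero separability idempotent $e_0$ of $C_0/R_0$ is already a simplicial separability idempotent of $C/R$. You do this by comparing the two unital ring maps $t\mapsto(t\otimes1)s_0^n(e_0)$ and $t\mapsto(1\otimes t)s_0^n(e_0)$ into the commutative corner ring. These maps agree on the image of $\phi_n$, so the universal property of the epimorphism $\phi_n$ makes them equal.

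Your route is elementary and self-contained. It bypasses Proposition \ref{yypro5.10}(1), whose proof in the paper rests on the enriched separable-functor characterization (Proposition \ref{yypro3.16}) and a composition property of separable enriched functors. It also exhibits the separability idempotent explicitly: it is just $e_0$, consistent with the uniqueness in Remark \ref{yyrem2.9}(1). Your opening appeal to Lemma \ref{yylem4.1} in that direction is never actually used and can be dropped.

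The paper's route buys conceptual economy. It isolates the general fact that a levelwise epimorphism of commutative srings is separable with idempotent $1\otimes 1$. It also reuses the transitivity machinery that drives Theorem \ref{yythm6.6} and Corollary \ref{yycor6.8}.
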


\begin{proof}
Let $A = R \sotimes_{R_0} C_0$.
Then there is a canonical sring morphism $$R\xrightarrow[]{\eta_{A/R}} A \xrightarrow[]{\phi} C.$$

    (1) $\implies$ (2) If $C$ is separable over $R$,
    then $C_0$ is separable over $R_0.$
    We show \begin{align}
    \label{E6.11.1}
    \tag{E6.11.1}
        \phi_n: A_n=R_n \otimes_{R_0} C_0 \to C_n \qquad r\otimes c \mapsto rs_0^n(c)
    \end{align}
    for every $r\otimes c 
    \in (R\sotimes_{R_0} C_0)_n$
        is a ring epimorphism for every $n\geq 0$.
    Consider the canonical quotient map $$\pi_n: C_n \otimes_{R_n} C_n \to C_n \otimes_{A_n} C_n.$$
    Let $e = \sum_{i=1}^t x_i \otimes_{R_0} y_i \in (C^e)_0$ be a separability idempotent of $C$ over $R.$
    Note that $\phi_n(1_{R_n } \otimes_{R_0} x_i) = s_0^n(x_i).$
    It follows that
    \begin{align*}
        \pi_n(s_0^n(e)) &= \sum_{i=1}^t s_0^n(x_i) \otimes_{A_n} s_0^n(y_i)\\ &=\sum_{i=1}^t \phi_n(1_{R_n } \otimes_{R_0} x_i) \otimes_{A_n} s_0^n(y_i) \\
        &=\sum_{i=1}^t 1_{C_n} \otimes_{A_n} \phi_n(1_{R_n } \otimes_{R_0} x_i)s_0^n(y_i) && \text{$C$ is an $A$-smodule by $\phi: A \xrightarrow[]{} C$}\\
        &=\sum_{i=1}^t 1_{C_n} \otimes_{A_n} s_0^n(x_i)s_0^n(y_i) = 1_{C_n} \otimes_{A_n} 1_{C_n}. && \text{since $\mu(e)=1_{C_0}$}
    \end{align*}
     Since $s_0^n(e)$ is a separability idempotent of $C_n$ over $R_n,$ 
     $$1_{C_n} \otimes_{A_n} 1_{C_n} =\pi_n(s_0^n(e))$$ is a separability idempotent of $C_n$ over $A_n$ (Proposition \ref{yypro5.10}(2)).
     It follows that 
     \begin{align}
     \label{E6.11.2}
     \tag{E6.11.2}
         c\otimes_{A_n} 1_{C_n} = c(1_{C_n} \otimes_{A_n} 1_{C_n} ) = (1_{C_n} \otimes_{A_n} 1_{C_n})c= 1_{C_n}  \otimes_{A_n} c
     \end{align} for every $c\in C_n.$
     Consequently,
     the multiplication map
     $$\mu=\mu_{C_n/A_n}:C_n \otimes_{A_n} C_n \to C_n $$
     is an isomorphism with inverse $$h: C_n \to C_n \otimes_{A_n} C_n \qquad c\mapsto c \otimes_{A_n} 1_{C_n}$$
     for every $ c  \in C_n$.
     Indeed,
     it is clear that $\mu h= Id_{C_n}$ and by \eqref{E6.11.2}
     \begin{align*}
         h\mu(c\otimes_{A_n} c')&=h(cc')=cc'\otimes_{A_n} 1_{C_n} =(c\otimes_{A_n} 1_{C_n}) (c'\otimes_{A_n} 1_{C_n})\\
         &=(c\otimes_{A_n} 1_{C_n}) (1_{C_n}\otimes_{A_n} c')=c\otimes_{A_n } c'.
     \end{align*}
     By \cite[Tag 04VN]{Stacks-04VN},
     $\phi_n: A_n \to C_n$ is an epimorphism.

     (2) $\implies$ (1)
     Since $\phi_n$ is a ring epimorphism,
     by \cite[Tag 04VN]{Stacks-04VN},
     the multiplication map $\mu_{C_n/A_n}:C_n \otimes_{A_n} C_n \to C_n$ is an isomorphism.
     Therefore, we get $\mu_{C/A}: C \sotimes_A C \to C$ is an isomorphism of srings.
     View both as  $C \sotimes_A C$-smodules.
     Then $\mu_{C/A}$ is a $C \sotimes_A C$-smodule isomorphism which implies $C$ is separable over $A$ by Proposition \ref{yypro2.7}(2).
     Since $C_0$ is separable over $R_0$,
     then $A=R \sotimes_{R_0} C_0$ is separable over $R$ by Lemma \ref{yylem4.1}.
     Thus, $C$ is separable over $R$ by Proposition \ref{yypro5.10}(1).

     Suppose, in addition, that every $C_n$ is a finitely generated $R_n$-module.
     Then 
     $C_n$ is a finitely generated $A_n$-module.
     Therefore,
     the ring morphism $\phi_n: A_n  \to C_n$ is an epimorphism if and only if $\phi_n$ is surjective \cite[Tag 04VT]{Stacks-04VT}.
     Then it follows that $\phi: A \to C$ is a surjective sring morphism.
     Let $J=\ker \phi$.
     Then
     $C\cong A/J.$
     In level $0,$
     the canonical map
     $\phi_0: R_0 \otimes_{R_0} C_0 \to C_0$ is clearly injective.
     So $J_0=0.$
     Conversely,
     if $C \cong (R\sotimes_{R_0} T)/J$,
     then $R\sotimes_{R_0} T$ is separable over $R$ by Lemma \ref{yylem4.1}.
     It follows that $C \cong (R\sotimes_{R_0} T)/J$ is separable over $R$ by Lemma \ref{yylem4.4}.
     \end{proof}

\begin{remark}
\label{yyrem6.12}
We record two consequences of 
Theorem \ref{yythm6.11}. 
         \begin{enumerate}
             \item[(1)] 
             Let $K$ be a field. Recall in Example \ref{yyexa4.9},
             $W^{\langle 0 \rangle}/K$ is not separable over $K$.
             Let $C=W^{\langle 0 \rangle}/K$ and $R=K\Delta^0$.
             The canonical ring morphism
             $$\phi: R_n \sotimes_{R_0} (W^{\langle 0 \rangle}/K)_0 \to  (W^{\langle 0 \rangle}/K)_n $$
             is $K \to K^{\oplus (n+1)}$ which is not an epimorphism when $n\geq 1$.
             So Theorem \ref{yythm6.11} reconfirms the non-separability of $W^{\langle 0 \rangle}/K$ over $K.$

             \item[(2)] Let $\Bbbk$ be a commutative ring.
             Let $R=\Bbbk\Delta^0$.
             Suppose $C$ is a commutative separable $R$-salgebra such that each $C_n$ is a finitely generated $R_n$-module.
             Then Theorem \ref{yythm6.11} implies $C \cong \Bbbk\Delta^0 \sotimes_{\Bbbk} C_0$ because any sideal $J$ of $\Bbbk\Delta^0\sotimes_{\Bbbk} C_0$ such that $J_0=0$ must be trivial.
             So Theorem \ref{yythm6.11} recovers Theorem \ref{yythm2.11} for the finite commutative $\Bbbk$-salgebras.
         \end{enumerate}
         
     \end{remark}

\section{Brauer Groups of Simplicial Commutative Rings}
\label{yysec7}

We define the Brauer group of a commutative sring $R$ and show it is isomorphic to the Brauer group of the level zero component $R_0.$

\subsection{Basics of Brauer groups}
\label{yysec7.1}

Let $R$ be a commutative sring.
Define $B(R)$ to be the set of isomorphism classes of Azumaya $R$-salgebras.
Then $(B(R), \sotimes_R)$ is a commutative monoid under $\sotimes_R$ with unit $R$.
For any $A,B\in B(R)$ (by abuse of notation,
we use $A$ to denote the isomorphism class of $A$),
define $A \sim B$ if $A$ and $B$ are simplicial Morita equivalent.

\begin{lemma}
\label{yylem7.1}
The reltion 
$\sim$ respects $\sotimes_R$ and is an equivalence relation in $B(R).$
\end{lemma}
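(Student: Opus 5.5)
The plan is to use the characterization of simplicial Morita equivalence in Theorem \ref{yythm5.8}, chiefly condition (3): $A \sim B$ if and only if there is an invertible $(A,B)$-sbimodule. Each property then comes from an explicit construction with invertible sbimodules. Condition (1), an $R\sMod$-enriched equivalence $B\sMod \to A\sMod$, serves as a cross-check.

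For the equivalence relation, reflexivity is witnessed by $A$ itself as an $(A,A)$-sbimodule, since $A\sotimes_A A\cong A$. For symmetry, if ${}_AP_B$ is invertible with inverse ${}_BQ_A$, then $Q$ is an invertible $(B,A)$-sbimodule with inverse $P$, because Definition \ref{yydef5.7} is symmetric in the two isomorphisms. For transitivity, take invertible ${}_AP_B$ and ${}_BP'_C$ with inverses $Q$ and $Q'$. Then $P\sotimes_B P'$ is an $(A,C)$-sbimodule with candidate inverse $Q'\sotimes_B Q$. Associativity of the levelwise tensor product, which is immediate since everything is computed levelwise by Definition \ref{yydef1.7}, gives
$$(P\sotimes_B P')\sotimes_C (Q'\sotimes_B Q)\cong P\sotimes_B (P'\sotimes_C Q')\sotimes_B Q\cong P\sotimes_B B\sotimes_B Q\cong A,$$
and symmetrically for the other composite. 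Alternatively, compose the enriched equivalences from Theorem \ref{yythm5.8}(1). We must also check that $\sim$ is well defined on isomorphism classes: an $R$-salgebra isomorphism $A\cong A'$ lets us pull back sbimodule structures, so $A'\sim A$.

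For compatibility with $\sotimes_R$, suppose $A\sim A'$ via invertible ${}_AP_{A'}$ with inverse $Q$, and let $B\in B(R)$. I would show that $P\sotimes_R B$ is an invertible $(A\sotimes_R B, A'\sotimes_R B)$-sbimodule with inverse $Q\sotimes_R B$. Once this is done, $A\sim A'$ and $B\sim B'$ give $A\sotimes_R B\sim A'\sotimes_R B\sim A'\sotimes_R B'$, using symmetry of $\sotimes_R$ together with transitivity. We also need $A\sotimes_R B$ to be Azumaya so that it lies in $B(R)$; this is Remark \ref{yyrem6.2}(3).

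The main step is the interchange isomorphism
$$(P\sotimes_R B)\sotimes_{A'\sotimes_R B}(Q\sotimes_R B)\cong (P\sotimes_{A'} Q)\sotimes_R (B\sotimes_B B)\cong A\sotimes_R B$$
as $(A\sotimes_R B, A\sotimes_R B)$-sbimodules, and likewise for the other composite. I would prove it levelwise, where it is the classical fact that $(M\otimes_R N)\otimes_{S\otimes_R T}(M'\otimes_R N')\cong (M\otimes_S M')\otimes_R (N\otimes_T N')$ for bimodules over a commutative base. Then I would check that these levelwise isomorphisms are natural in the ring and modules, hence commute with the simplicial structure maps $\theta^*$, and respect the bimodule actions. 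Naturality is the only point needing care, and it holds because all maps are defined on elementary tensors by formulas preserved by $\theta^*$.
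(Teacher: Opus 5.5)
Your proposal is correct and follows essentially the paper's argument: both reduce, via Theorem \ref{yythm5.8}(3), to constructing invertible sbimodules. The paper tensors the two invertible sbimodules at once, taking $P\sotimes_R P'$ for an invertible $(B,B')$-sbimodule $P'$, whereas you change one factor at a time and then use transitivity; you also write out reflexivity, symmetry, transitivity and the levelwise interchange isomorphism, which the paper treats as routine.
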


\begin{proof}
    We show if $A\sim A'$ and $B \sim B'$, 
    then $(A\sotimes_R B) \sim (A' \sotimes_R B').$
    By Theorem \ref{yythm5.8},
    it suffices to show there exists an invertible $(A\sotimes_R B, A' \sotimes_R B')$-sbimodule.
    Since $A\sim A'$ and $B \sim B'$, by Theorem \ref{yythm5.8},
    there is an invertible $(A,A')$-sbimodule $P$ and an invertible $(B,B')$-sbimodule $P'$.
    It follows that $P\sotimes_R P'$ is the desired invertible $(A\sotimes_R B, A' \sotimes_R B')$-sbimodule.
    Thus, $\sim$ respects levelwise tensor product $\sotimes_R$.
    The usual reflexivity, symmetry, and transitivity are routine to verify.
\end{proof}

\begin{definition}
\label{yydef7.2}
    Define $Br(R):=B(R)/\sim$.
    This is a commutative group with unit $R$,
    product $\sotimes_R$,
    and inverse $(-)^{o}$ (Remark \ref{yyrem6.5}).
    We call $Br(R)$ the {\it Brauer group} of $R$ and
    we denote the equivalence class of $A\in B(R)$ by $[A].$
\end{definition}

The following proposition recovers the usual descriptions of Brauer groups; see \cite[p.~108]{VOZ1998} and 
\cite[p.~252]{Ford2017}.

\begin{proposition}
\label{yypro7.3}
    Let $A$ and $B$ be Azumaya $R$-salgebras.
    The following are equivalent.
    \begin{enumerate}
        \item[(1)] $A$ is simplicial Morita equivalent to $B$.

        \item[(2)] There exist $R$-sprogenerators $N$ and $M$ such that 
        $$A\sotimes_R \shom_R(M,M) \cong B \sotimes_R \shom_R(N,N).$$
    \end{enumerate}
    
\end{proposition}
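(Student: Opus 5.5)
We prove both implications by transporting the classical argument through the Morita theory of Section \ref{yysec5}. The basic tool is this: for an $R$-sprogenerator $M$, the $R$-salgebra $E_M:=\shom_R(M,M)$ is simplicial Morita equivalent to $R$. Indeed, $M$ is an $(E_M,R)$-sbimodule, $M_R$ is an sprogenerator, and $\lambda: E_M\to \shom_R(M,M)$ is the identity, so Theorem \ref{yythm5.8}(2) applies. Moreover, $E_M$ is Azumaya: Theorem \ref{yythm5.8} gives $Z(E_M)\cong R$ by Morita invariance of the center, or directly $Z(E_M)=\shom_{E_M\sotimes E_M^o}(E_M,E_M)\cong\shom_{R}(R,R)=R$. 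Separability of $E_M$ follows because $E_M$ is a direct summand of a simplicial matrix ring, using Lemma \ref{yylem5.3}, Proposition \ref{yypro2.16} and Lemma \ref{yylem4.4}. Alternatively, check that $E_M^e\cong \shom_R(E_M,E_M)$ and apply Theorem \ref{yythm6.4}(3).

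(2) $\Rightarrow$ (1). Since $E_M\sim R$ and $E_N\sim R$, Lemma \ref{yylem7.1} gives $A\sim A\sotimes_R E_M$ and $B\sotimes_R E_N\sim B$. The hypothesis says $A\sotimes_R E_M\cong B\sotimes_R E_N$. By transitivity, $A\sim B$.

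(1) $\Rightarrow$ (2). By Theorem \ref{yythm5.8}(2), there is an $(A,B)$-sbimodule $P$ such that $P_B$ is a $B$-sprogenerator and $A\cong\shom_B(P,P)$. Set $M:=A$ and $N:=P$, where $N$ is viewed as an $R$-smodule. Since $B$ is an $R$-sprogenerator (Theorem \ref{yythm6.4}(3)) and $P$ is a direct summand of $B^{\oplus t}$ containing $B$ as a summand of $P^{\oplus t'}$, Lemma \ref{yylem5.3} shows that $P$ is an $R$-sprogenerator: direct summands of summands of $R^{\oplus s}$ remain such, and $R$ is a summand of $B$ and hence of $P^{\oplus t'}$. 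Theorem \ref{yythm6.4}(3) also gives $A\sotimes_R A^o\cong E_A$. Now the goal is the chain
\[
A\sotimes_R \shom_R(P,P)\;\cong\; A\sotimes_R \bigl(B^o\sotimes_R\shom_B(P,P)\bigr)\;\cong\; A\sotimes_R B^o\sotimes_R A\;\cong\; B^o\sotimes_R E_A\;\cong\; \cdots,
\]
and we rearrange the factors so that both sides become $B$ (or $A$) tensored with an endomorphism salgebra of a progenerator. The cleanest version is the following: take $N:=P$ and $M:=B$, and show $\shom_R(P,P)\cong A\sotimes_R B^o$. Then
\[
B\sotimes_R \shom_R(P,P)\cong A\sotimes_R (B\sotimes_R B^o)\cong A\sotimes_R E_B,
\]
which is exactly (2) with the roles relabeled.

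The key step, and the main obstacle, is the isomorphism $A\sotimes_R B^o\to \shom_R(P,P)$, $a\otimes b\mapsto (p\mapsto apb)$. We would prove it by reducing to $P=B$. The map $B\sotimes_R B^o\to E_B$ is an isomorphism by Theorem \ref{yythm6.4}(3). Both sides are additive and compatible with finite direct sums and retracts in the $P_B$ variable: for $P\oplus P'=B^{\oplus t}$, the left side is $\shom_B(B^{\oplus t},B^{\oplus t})\sotimes_R B^o$. So the natural map
\[
\shom_B(P,P)\sotimes_R B^o\to \shom_R(P,P)
\]
is a retract of the corresponding map for $B^{\oplus t}$. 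That map is $M_t(B)\sotimes_R B^o\to M_t(\shom_R(B,B))$, which is an isomorphism. Here we must check carefully that $\shom_B(-,-)$ and $\shom_R(-,-)$ commute with finite sums, and that $\sotimes_R B^o$ preserves the relevant retracts, which holds since tensoring is additive. The required naturality of the enriched hom is routine from Definition \ref{yydef3.1}.
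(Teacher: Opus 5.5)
Your proof is correct. The direction (2)$\Rightarrow$(1) is the same as the paper's. For (1)$\Rightarrow$(2) you take a more explicit route.

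The paper argues formally. Lemma \ref{yylem7.1} and Theorem \ref{yythm6.4} give $A\sotimes_R B^{o}\sim B\sotimes_R B^{o}\cong\shom_R(B,B)\sim R$. Theorem \ref{yythm5.8} then supplies some $R$-sprogenerator $N$ with $A\sotimes_R B^{o}\cong\shom_R(N,N)$, and taking $M=B$ with the same rearrangement of tensor factors finishes.

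You instead set $N=P$, the bimodule realizing $A\sim B$, and check directly that $P$ is an $R$-sprogenerator. You then prove $\shom_R(P,P)\cong\shom_B(P,P)\sotimes_R B^{o}\cong A\sotimes_R B^{o}$. The key step exhibits the natural map $\shom_B(P,P)\sotimes_R B^{o}\to\shom_R(P,P)$ as a retract, in the arrow category, of the corresponding map for $B^{\oplus t}$. That map is $M_t$ of the isomorphism $B^e\cong\shom_R(B,B)$ from Theorem \ref{yythm6.4}(3).

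This works for three reasons:
\begin{enumerate}
\item the enriched Hom-sets are additive in each variable;
\item the natural map commutes with pre- and post-composition by the right $B$-linear maps of the retract $P\to B^{\oplus t}\to P$;
\item a retract of an isomorphism is an isomorphism.
\end{enumerate}

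The paper's route is shorter and needs no new lemma. Yours costs this routine check, but it makes both the progenerator and the isomorphism $A\sotimes_R B^{o}\to\shom_R(P,P)$, $a\otimes b\mapsto(p\mapsto apb)$, explicit. In effect it gives a simplicial version of the classical identity $\mathrm{End}_R(P)\cong\mathrm{End}_B(P)\otimes_R B^{o}$. The two proofs in fact produce the same $N$: unwinding the invertible bimodules behind the paper's chain of equivalences gives $N\cong(P\sotimes_R B^{o})\sotimes_{B^e}B\cong P$.

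Two clean-ups.

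First, drop your preliminary claim that $\shom_R(M,M)$ is Azumaya. It is not needed, because the proof of Lemma \ref{yylem7.1} works for arbitrary $R$-salgebras; the paper applies it to $\shom_R(M,M)$ in exactly this way. Your sketched justification also does not hold up as written:
\begin{enumerate}
\item $\shom_R(M,M)$ is a corner $\epsilon M_t(R)\epsilon$ of the simplicial matrix ring, not a quotient by an sideal, so Lemma \ref{yylem4.4} does not apply;
\item Morita invariance of the center is not established in the paper.
\end{enumerate}

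Second, delete the abandoned attempt with $M=A$ in (1)$\Rightarrow$(2). The final choice $M=B$, $N=P$ is the one that works.
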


\begin{proof}
    (1) $\implies$ (2) Suppose $A \sim B$.
    By Lemma \ref{yylem7.1} and Theorem \ref{yythm6.4},
    $$(A\sotimes_R B^{o}) \sim (B \sotimes_R B^{o}) =B^e \cong \shom_R(B,B) \sim  R.$$
    By Theorem \ref{yythm5.8},
    there is an $R$-sprogenerator $N$ such that $A\otimes_R B^{o} \cong \shom_R(N,N).$
    Therefore,
    we have
    \begin{align*}
        A \sotimes_R \shom_R(B,B) &\cong A \sotimes_R (B \sotimes_R B^{o}) \cong (A \sotimes_R B^{o} )\sotimes_R B  \\
        & \cong \shom_R(N,N) \sotimes_R B  \cong B \sotimes_R \shom_R(N,N).
    \end{align*}
    The assertion follows from Theorem \ref{yythm6.4} that
    $M:=B$ is an $R$-sprogenerator.

    (2) $\implies $ (1) Suppose $A \sotimes_R \shom_R(M,M) \cong B \sotimes_R \shom_R(N,N)$ for some $R$-sprogenerators $M$ and $N$.
    Then $\shom_R(M,M) \sim R \sim \shom_R(N,N)$ by Theorem \ref{yythm5.4}.
    Therefore,
    $$(A \sotimes_R \shom_R(M,M)) \sim( A \sotimes_R R) \cong A$$
    since $\sim$ respects $\sotimes_R$ by Lemma \ref{yylem7.1}.
    Similarly,
    $$(B \sotimes_R \shom_R(N,N)) \sim (B \sotimes_R R) \cong B.$$
    It follows that
    $A \sim A \sotimes_R \shom_R(M,M) \cong B \sotimes_R \shom_R(N,N)  \sim B.$
\end{proof}

\subsection{Reduction to degree 0}
\label{yysec7.2}

\begin{theorem}
\label{yythm7.4}
{\rm{(Triviality Theorem)}}
Let $R$ be a commutative sring.
Then $$Br(R)\cong Br(R_0).$$
\end{theorem}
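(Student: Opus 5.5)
We construct mutually inverse group homomorphisms $\Phi: Br(R_0)\to Br(R)$ and $\Psi: Br(R)\to Br(R_0)$, with $\Phi([A_0]) = [R\sotimes_{R_0} A_0]$ and $\Psi([A]) = [A_0]$.

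\emph{Well-definedness on objects.} If $A_0$ is Azumaya over $R_0$, then $R\sotimes_{R_0} A_0$ is Azumaya over $R$ by Proposition \ref{yypro6.7}, direction ($\impliedby$). Conversely, if $A$ is Azumaya over $R$, then $A_0$ is Azumaya over $R_0$ by Remark \ref{yyrem6.2}(1). Both assignments respect tensor products:
$$(A\sotimes_R B)_0 = A_0\otimes_{R_0} B_0, \qquad (R\sotimes_{R_0} A_0)\sotimes_R (R\sotimes_{R_0} B_0)\cong R\sotimes_{R_0}(A_0\otimes_{R_0}B_0).$$
Both also send units to units. On the level of isomorphism classes they are mutually inverse: $(R\sotimes_{R_0}A_0)_0 = A_0$, and Proposition \ref{yypro6.7} gives $A\cong R\sotimes_{R_0}A_0$ for every Azumaya $A$. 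So $B(R)\cong B(R_0)$ as monoids. It remains to show that both maps preserve Morita equivalence.

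\emph{Compatibility with $\sim$.} For $\Phi$, suppose $A_0\sim B_0$ classically. Then there is an invertible $(A_0,B_0)$-bimodule $P_0$ with inverse $Q_0$. Base changing, $R\sotimes_{R_0}P_0$ and $R\sotimes_{R_0}Q_0$ are sbimodules over $R\sotimes_{R_0}A_0$ and $R\sotimes_{R_0}B_0$. Their tensor products are computed levelwise as $R_n\otimes_{R_0}(P_0\otimes_{B_0}Q_0)\cong R_n\otimes_{R_0}A_0$, naturally in $n$. So they are invertible in the sense of Definition \ref{yydef5.7}, and Theorem \ref{yythm5.8} gives $\Phi(A_0)\sim\Phi(B_0)$.

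For $\Psi$, suppose $A\sim B$ via an invertible $(A,B)$-sbimodule $P$ with inverse $Q$. Evaluating at level $0$ is compatible with $\sotimes$, since $(P\sotimes_B Q)_0 = P_0\otimes_{B_0}Q_0$. Hence $P_0$ is an invertible $(A_0,B_0)$-bimodule and $A_0\sim B_0$.

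With both compatibilities, $\Phi$ and $\Psi$ descend to mutually inverse homomorphisms between $Br(R_0)$ and $Br(R)$, which proves the theorem.

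\emph{Main obstacle.} The delicate point is the base-change step for $\Phi$. One must check that the isomorphisms $R\sotimes_{R_0}(P_0\otimes_{B_0}Q_0)\cong (R\sotimes_{R_0}P_0)\sotimes_{R\sotimes_{R_0}B_0}(R\sotimes_{R_0}Q_0)$ are genuine sbimodule isomorphisms, compatible with the simplicial structure maps. This holds because the structure maps act only on the $R$-factor, and level $0$ enters only through $s_0^n$. An alternative route is Proposition \ref{yypro7.3}: $\shom_R(R\sotimes_{R_0}M_0,R\sotimes_{R_0}M_0)\cong R\sotimes_{R_0}\operatorname{End}_{R_0}(M_0)$ for a finitely generated projective generator $M_0$, which follows from Lemma \ref{yylem5.3} by reducing to $M_0$ being a summand of $R_0^{\oplus t}$. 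I would use this as a check if the direct argument becomes awkward.
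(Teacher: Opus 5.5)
Your proposal is correct and follows essentially the same route as the paper. The paper likewise defines $[A]\mapsto[A_0]$ and $[T]\mapsto[R\sotimes_{R_0}T]$, checks well-definedness via Remark \ref{yyrem6.2}(1), Proposition \ref{yypro6.7}, restricting an invertible sbimodule to level $0$ and base-changing an invertible bimodule along $R_0\to R$, and then uses $A\cong R\sotimes_{R_0}A_0$ from Proposition \ref{yypro6.7} to show the two maps are mutually inverse. Your extra remark that $B(R)\cong B(R_0)$ already on isomorphism classes is a harmless strengthening of the same argument.
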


\begin{proof}
    If $A$ is Azumaya over $R$,
    then $A_0$ is Azumaya over $R_0$ (Remark \ref{yyrem6.2}(1)).
    Moreover,
    if $A\sim B$,
    then $A_0\sim B_0$ because any invertible $(A,B)$-sbimodule $P$ induces an invertible $(A_0, B_0)$-bimodule $P_0$.
    Thus,
    we define a group homomorphism
    \begin{align*}
        f: Br(R) \to Br(R_0) \qquad [A] \mapsto [A_0].
    \end{align*}
    Conversely,
    given any Azumaya $R_0$-algebra $T$,
    then 
    $R \sotimes_{R_0} T$ is Azumaya over $R$ (Proposition \ref{yypro6.7}).
    Moreover, if $T, T'$ are Morita equivalent $R_0$-algebras,
    there is an invertible $(T,T')$-bimodule $Q$. 
    This induces an invertible $(R\sotimes_{R_0}T, R\sotimes_{R_0}T')$-sbimodule $R\sotimes_{R_0} Q.$
    Therefore, $R\sotimes_{R_0}T \sim R\sotimes_{R_0}T'$.
    Furthermore,
    $$(R \sotimes_{R_0} T) \sotimes
    _R (R\sotimes_{R_0} T') \cong (R \sotimes_R R) \sotimes_{R_0} (T \sotimes_{R_0} T')\cong R\sotimes_{R_0} (T \sotimes_{R_0} T').$$
    This implies that we can define a group homomorphism 
    \begin{align*}
        g:Br(R_0) \to Br(R) \qquad [T] \mapsto [R\sotimes_{R_0} T].
    \end{align*}
    Now $f$ and $g$ are group isomorphisms since $fg([T]) = f([R\sotimes_{R_0} T]) = [T]$ and 
    \begin{align*}
     gf([A]) = g([A_0]) = [R\sotimes_{R_0} A_0]=[A]
    \end{align*}
    where the third equality is by Proposition \ref{yypro6.7}.
    This completes the proof.
\end{proof}

\subsection*{Acknowledgments}
We used GPT to assist in verifying some of our arguments and searching typos.
We have written and checked all proofs on our own, and we take full responsibility for the mathematical content of this paper.
This project was partially supported by the US National Science Foundation (No. DMS-2302087).

\end{document}